\documentclass[10pt]{article}

\usepackage{amssymb,amsthm,amsmath,amsfonts}
\usepackage{graphicx}
\usepackage{subfigure}

\usepackage[usenames]{color}
\graphicspath{{./pics/}}
\usepackage[colorlinks,linktocpage,linkcolor=blue]{hyperref}
\usepackage{algorithm}
\usepackage{algorithmicx}
\usepackage{algpseudocode}

\newtheorem{thm}{Theorem}[section]

\newtheorem{lem}[thm]{Lemma}

\newtheorem{exam}{Example}

\newtheorem{algo}[thm]{WG algorithm}

\setlength\topmargin{-1cm} \setlength\textheight{220mm}
\setlength\oddsidemargin{0mm}
\setlength\evensidemargin\oddsidemargin \setlength\textwidth{160mm}
\setlength\baselineskip{18pt}

\makeatletter

\makeatother

\numberwithin{equation}{section}

  \def\nb{\nonumber}
\def \Vh0{\stackrel{\circ}{V}_h} \def\to{\rightarrow}

\def\ra{\rangle} \def\la{\langle}

\newcommand{\lc}
{\mathrel{\raise2pt\hbox{${\mathop\la \limits_{\raise1pt\hbox
{\mbox{$\sim$}}}}$}}}

\newcommand{\gc}
{\mathrel{\raise2pt\hbox{${\mathop\ra\limits_{\raise1pt\hbox{\mbox{$\sim$}}}}$}}}

\newcommand{\ec}
{\mathrel{\raise2pt\hbox{${\mathop=\limits_{\raise1pt\hbox{\mbox{$\sim$}}}}$}}}

\def\beqn{\begin{eqnarray}}  \def\eqn{\end{eqnarray}}

\def\beqnx{\begin{eqnarray*}} \def\eqnx{\end{eqnarray*}}

\def\bn{\begin{enumerate}} \def\en{\end{enumerate}}

\def\bd{\begin{description}} \def\ed{\end{description}}

\title{{Weak Galerkin Method for Electrical Impedance Tomography}}
\author{
Ying Liang\footnote{Department of Mathematics, Purdue University, West Lafayette, IN 47907. (liang402@purdue.edu).}
\and Jun Zou\footnote{Department of Mathematics, The Chinese University of Hong Kong, Shatin, N.T., Hong Kong.The work of this author was substantially
supported by Hong Kong RGC General Research Fund (Project 14306718) and NSFC/Hong Kong RGC Joint Research Scheme 2016/17
(Project N\_CUHK437/16).
(zou@math.cuhk.edu.hk).}
}

\begin{document}
\date{}
\maketitle

\begin{abstract}
We propose and analyze a weak Galerkin method for electrical impedance tomography based on the bounded variation regularization.   We use the complete electrode model to describe the forward process,  approximate the system by a weak Galerkin formulation with the lowest order, and demonstrate the advantages of the method for recovering the piecewise constant conductivities. 
The error estimate of the weak Galerkin method for the direct problem is derived and then utilized to establish the convergence of the proposed algorithm for the inverse problem. 
Numerical examples are presented to verify the effectiveness and efficiency of the weak Galerkin method.
\end{abstract}
\noindent {\footnotesize {\bf Keywords}:
Electrical impedance tomography, weak Galerkin finite element method, reconstruction algorithm}

\noindent {\footnotesize {\bf Mathematics Subject Classification(MSC2000)}: 35R30, 65F22, 65N21, 65N30, 86A22}

\section{Introduction}\label{sec1}

Electrical impedance tomography (EIT) is a technique of estimating the unknown internal physical conductivity of an object from the noisy voltage measurements on the surface, which has aroused considerable interest in nondestructive testing \cite{hallaji2014electrical}, breast
cancer detection \cite{borsic2010vivo}, stroke classification \cite{malone2014stroke}, and geophysical prospecting \cite{yu2000modified},  etc.
A typical experimental setup is considered in this work.
First, currents are driven into the concerned object by a set of electrodes attached to the surface. Then the
induced electric voltages will be measured on some electrodes to reconstruct the interior electrical properties.
This procedure is usually conducted several times with different input currents to collect sufficient information for the estimate of the interior conductivity. This electromagnetic process can be mathematically described by the complete electrode model \cite{cheng1989electrode, somersalo1992existence} in many applications.

Due to its growing popularity, many algorithms have been proposed for the EIT inverse problem (see e.g., \cite{lechleiter2008factorization, scherzer2011handbook, jin2012reconstruction, jin2012analysis, lechleiter2006newton, winkler2014resolution, knudsen2009regularized}). 
EIT is known to be highly ill-posed and nonlinear. In particular, the reconstruction is often very unstable with respect to 
the boundary measurements. Therefore, the development of some accurate and flexible forward solvers and the use of proper regularization techniques in the inverse process are crucial to handle the noisy measurements and the complex geometric boundary associated with the complete electrode model effectively.  
We will develop and analyze a novel weak Galerkin (WG) method as the forward solver, and 
follow a variational approach in the inverse process to seek the reconstruction of the physical conductivity 
by minimizing an appropriate functional 
with the bounded variation (BV) regularization.

Various numerical methods have been applied to solve the  direct problem in EIT, 
including 
 the boundary element method \cite{daneshmand20133d}, the mesh-free method \cite{yousefi2013combined}, the adaptive finite element method \cite{jin2016convergent}, and the stochastic Galerkin finite element method \cite{hakula2014reconstruction}.
We will develop a WG method as the forward solver and analyze its convergence.
The WG method is a class of numerical schemes that allows numerical approximations to be totally discontinuous \cite{wang2013weak}.
The main idea of the WG method is to interpret partial differential operators as weak differential operators, i.e., the generalized distributions over the space of discontinuous functions that include boundary information.
Then the partial differential equations can be reformulated as 
variational equations with the help of discrete weak differential operators. 
This WG method has been developed for a number of PDEs, 
e.g., bi-harmonic problems \cite{mu2014weak}, Stokes flow \cite{wang2016weak},  Signorini and obstacle problems \cite{zeng2017convergence}  for an incomplete list.

The WG method has several important strengths:
(1) the mesh partition can be of polytope type, i.e., any type of polygons in 2D or polyhedra in 3D; 
(2) the weak finite element space is easy to construct with general stability and approximation requirements;
and (3) the WG schemes can be hybridized so that some unknowns associated with the interior of each element can be locally eliminated, yielding a system of linear equations involving much fewer unknowns than what it appears. 
More importantly, as the WG method allows the separation of degrees of freedom on the edges (or faces) and interiors of 
each finite element, it can more accurately recover the boundaries of inhomogeneous inclusions as well as 
approximate the jump discontinuities in the current density at the edge of electrodes of 
non-zeros contact impedance on the boundary due to the complex physical setup of the complete electrode model. 


Moreover, when implementing the WG method, one
does not need to appropriately choose any (large) parameters for stabilizers as in most discontinuous Galerkin (DG) methods. In addition, the WG method is locally conservative by its design, and the normal fluxes across the interfaces of elements are continuous while there is no continuity in the DG fluxes \cite{Bastian2003dg}.
In addition, while special preconditioners or fast solvers are typically required to solve the resulting indefinite discrete linear systems (saddle-point problems) arising from mixed finite element method for elliptic problems \cite{brezzi1991mixed}, the WG method will always result in symmetric definite linear systems \cite{lin2015comparative}. These advantages make the WG method well-suited for approximating the EIT problem.

Inspired by \cite{mu2015weak}, we propose in this work a WG scheme that uses a new combination of polynomial spaces
different from the original WG scheme in \cite{wang2013weak}.  Specifically, instead of using polynomials of degree $k$ both in the interior and on the edge (or face) of each element, we use polynomials of degree $k-1$ on the edge (or face) and polynomials of degree $k$ in the interior of elements. In this way, the scheme reduces the number of unknowns without compromising its accuracy when compared with the initial scheme in \cite{wang2013weak}. In our analysis and numerical implementations, the proposed WG scheme with the lowest order polynomial approximation, i.e., piecewise linear polynomials in the interior of each element and constant on the edge (or face) of each element, is used. This reduces by half the size of unknowns compared with the original scheme in \cite{wang2013weak} with piecewise linear approximation in the interior and boundary of each element,  and the convergence order for the forward problem remains to be optimal.

For the inverse process, we will apply reconstruction algorithms based on a bounded variation (BV) regularization. 
The conventional choice of the regularization is $L^2$ norm or $H^1$ seminorm. 
The former was studied in \cite{Lukaschewitsch2003Tik} for the EIT problem, but the performance is not satisfactory as there exist undesired oscillations in the reconstruction. The latter, as analyzed in \cite{jin2012analysis}, may 
significantly smoothen the discontinuity in the solution as well as the boundaries of inhomogeneous inclusions.
In practice, however, the recovering conductivity is expected to be only piecewise regular and have sharp gradients in many applications such as geophysical imaging \cite{yu2000modified}, non-destructive testing \cite{hallaji2014electrical}.
In contrast, BV regularization has been proved to be successful 
in retaining sharp features of sought-for parameters and applied to 
the EIT inverse problem in the level-set method setting \cite{Chung2005tv}.
We further carry out the convergence analysis for the proposed BV-based reconstruction algorithm. 


Unlike the $L^2$ or $H^1$ regularizations, the BV regularization is non-differentiable. 
As a result, typical gradient-based methods such as the gradient 
descent method and Newton's method are not applicable for optimization. 
To numerically solve the minimization problem based on the BV regularization, we apply the fast iterative shrinkage/thresholding algorithm  (FISTA) \cite{beck2009fast}. One of its characteristics is that 
a combination of two previous iterates is used at each iteration to enjoy a better global rate of convergence than other existing projection-based methods. 
We remark that an auxiliary dual problem is also solved in each iteration 
using the WG method to calculate the G\^{a}teaux derivative 
of the differentiable part, i.e., the discrepancy functional, 
and the gradient update is followed by a proximal step 
associated with the BV regularization.

This paper is organized as follows.  In Section~\ref{sec2}, we present the complete electrode model for the forward process and some necessary notations. Then we describe the EIT inverse problem with BV regularization and 
prove the existence and stability of minimizers in Section~\ref{sec3}. 
Section~\ref{sec4} is dedicated to the description of the WG formulation and the error analysis. The convergence analysis of our algorithm for the discrete EIT inverse problem is provided in Section~\ref{sec5}.
In Section~\ref{sec6}, we present the results of numerical experiments to demonstrate the effectiveness of the proposed algorithm.
Finally, we conclude this work in Section~\ref{conclusion}, and 
the numerical algorithm of solving discrete EIT inverse problem by FISTA is presented in Appendix~\ref{app1}.  
Throughout the paper, we use the standard notation for Sobolev spaces as in \cite{Evans2010pde} and 
adopt $(\cdot ,\cdot)$ and $\langle \cdot ,\cdot\rangle$ for 
the inner products in $[L^2(\Omega)]^d$ and Euclidean space respectively. The notation $C$ denotes a generic constant that may differ in each condition, but it is always independent of the mesh size in the numerical discretization and other quantities of interest.

\section{Preliminaries}\label{sec2}

In this section, we begin by revisiting the complete electrode model (CEM), an accurate forward model for the EIT problem. Next, we present the weak formulation of the CEM and introduce some continuity results of the forward operator, which will be essential for the subsequent analysis.

As a standard model for medical applications of EIT, CEM can capture some practical features of the EIT problem, such as the discrete nature of electrodes and contact impedance effect; see \cite{cheng1989electrode, somersalo1992existence} for details.
Let $\Omega$ be an open bounded domain in $\mathbb{R}^d$ ($d=2,3$)  with a polyhedral boundary denoted by $\Gamma$.
Consider a set of electrodes, denoted by $\{e_l\}_{l=1}^L$, 
which are line segments/planar surfaces on $\Gamma$ and disjoint from each other, i.e., 
$\bar{e}_i\cap\bar{e}_k=\phi$ if $i\neq k$. 
The applied current on the $l$-th electrode $e_l$ is denoted as $I_l$, and we define the current vector $I=(I_1, \ldots, I_L)^t\in\mathbb{R}_\diamond^L$, where $\mathbb{R}_\diamond^L:=\{I\in \mathbb{R}^L: ~\sum_{l=1}^L I_l=0\}$, in accordance with the law of charge conservation.
The electrode voltage is denoted by $U=(U_1,\ldots, U_L)^t$ and is normalized such that $U\in \mathbb{R}_\diamond^L$. With these notations, we can present the mathematical formulation of the CEM as follows:
given the conductivity $\sigma$,  positive contact impedances $\{z_l\}_{l=1}^L$, and the input current $I\in \mathbb{R}_\diamond^L$, find the potential $u\in H^1(\Omega)$ and the electrode voltage $U\in \mathbb{R}_\diamond^L$ such that
\begin{equation}\label{contieqn}
\left\{
\begin{array}{rlll}
  -\nabla\cdot (\sigma\nabla u) &=& 0 & \mbox{ in } \Omega,\\
  u+z_l\sigma\frac{\partial u}{\partial n} &=& U_l &\mbox{ on } e_l \mbox{ for } l=1, \ldots,L,\\
 \int_{e_l}\sigma \frac{\partial u}{\partial n}\,ds&=& I_l& \mbox{ for }l=1, \ldots, L, \\
  \sigma \frac{\partial u}{\partial n} &=& 0& \mbox{ on } \Gamma\backslash \cup_{l=1}^L e_l.
  \end{array}
\right.
\end{equation}
%
The governing equation in \eqref{contieqn} is derived under a quasi-static assumption on the electromagnetic field in the interior of the concerned object. The second equation reflects the contact impedance effect: when electrical currents are injected into the object, a highly resistive thin layer with surface impedance $z_1, \cdots, z_L$ 
forms at the electrode-electrolyte
interface, leading to potential drops across the electrode-electrolyte interface. The value of potential
drop is determined by the product of the surface impedance $z_l$ and the current density $\sigma\frac{\partial u}{\partial n}$  by Ohm's law. As 
metallic electrodes are perfect conductors, the voltage $U_l$ is constant on each electrode. The
third equation describes that when injected through the electrode $e_l$, the current $I_l$ is completely confined within each electrode itself.

Considering the fact that physical conductivities are often discontinuous and have (possibly large) jumps 
across the interfaces between homogeneous and inhomogeneous media, 
we introduce the admissible set $\mathcal{A}$ that allows these physical features. 
We first define the total variation \cite{asas1999regularization} of a function $q \in L^1(\Omega)$ by
\begin{equation}\label{defbv}
\int_\Omega |Dq|=\sup\, \left\{\int_\Omega q \ \text{div} \ \mathbf{g} \, dx :  \mathbf{g}\in[ C_0^\infty(\Omega)]^d \mbox{ and } |\mathbf{g}(x)|_\infty \leq 1, \  x\in\Omega\right\},
\end{equation}
where  $C_0^\infty(\Omega)$ denotes the space of infinitely differentiable functions with compact support in $\Omega$, and $|\cdot|_\infty$ denotes the $l_\infty$ norm of vectors in $\mathbb{R}^d$, that is, for $x=(x_1,\cdots,x_d)^t$,
$$|x|_\infty: = \max_{1\leq i\leq d} |x_i|.$$ 
Then we can define the space of functions with bounded variation:
\beqn\nb
BV(\Omega) = \Big\{q\in L^1(\Omega): \int_\Omega \vert D q \vert < \infty \Big\},
\eqn
which is a Banach space endowed with the norm 
$
\Vert q\Vert_{BV(\Omega)}:=\Vert q\Vert_{L^1(\Omega)}+\int_\Omega |Dq|.
$
As physical conductivities $\sigma$ are naturally bounded both from
below and above, we come to define the admissible set $\mathcal{A}$ as 
\beqn\label{def:A}
\mathcal{A}=\{\sigma\in BV(\Omega): \lambda\leq\sigma(x)\leq \lambda^{-1} \text{ a.e. }  x\in \Omega\}\, 
\eqn
for some constant $\lambda\in (0,1)$. 
For the subsequent analysis, we introduce the lower-semicontinuity of the total variation below.  The proof can be found in \cite{giusti1984minimal}.
\begin{lem}
\label{lsc_tv}
Let $q \in BV(\Omega)$ and $\{ q_n \}_{n=1}^\infty \subset BV(\Omega)$. 
Suppose $q_n \to q$ in $L^1(\Omega)$ as $n\to \infty$. Then 
\beqn\nb
\int_\Omega \vert Dq \vert \leq \liminf\limits_{n\to\infty} \int_\Omega \vert D q_n \vert.
\eqn
Moreover, the embedding of $BV(\Omega)$ in $L^1(\Omega)$ is compact.
\end{lem}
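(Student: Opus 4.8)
The plan is to argue directly from the definition of the total variation as a supremum over the class of admissible test fields, exploiting that each test field produces a bounded, compactly supported integrand against which $L^1$-convergence passes to the limit. First I would fix an arbitrary $g \in [C_0^1(\Omega)]^2$ with $|g(x)|_\infty \le 1$ in $\Omega$. Since $g$ is continuously differentiable with compact support, $\operatorname{div} g \in C_0(\Omega)$ is in particular bounded, say $\|\operatorname{div} g\|_{L^\infty(\Omega)} \le M_g$. Then
\beqn\nb
\left| \int_\Omega q \,\operatorname{div} g \, dx - \int_\Omega q_n \,\operatorname{div} g \, dx \right| \le M_g \, \|q - q_n\|_{L^1(\Omega)} \longrightarrow 0
\eqn
as $n \to \infty$, by the assumed convergence $q_n \to q$ in $L^1(\Omega)$. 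Hence $\int_\Omega q \,\operatorname{div} g \, dx = \lim_{n\to\infty} \int_\Omega q_n \,\operatorname{div} g \, dx$.

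Next, for each fixed $n$ the field $g$ is an admissible competitor in the definition of $\int_\Omega |Dq_n|$, so $\int_\Omega q_n \,\operatorname{div} g \, dx \le \int_\Omega |Dq_n|$. Taking the $\liminf$ over $n$ on the right-hand side and using the limit identity just established,
\beqn\nb
\int_\Omega q \,\operatorname{div} g \, dx = \lim_{n\to\infty} \int_\Omega q_n \,\operatorname{div} g \, dx \le \liminf_{n\to\infty} \int_\Omega |Dq_n|.
\eqn
The right-hand side does not depend on $g$. Therefore, taking the supremum over all admissible $g$ on the left and invoking the definition of $\int_\Omega |Dq|$ yields the claim.

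There is no genuine obstacle in this argument; it is essentially the standard lower-semicontinuity proof transported to the present notation. The only point meriting care is the justification of the interchange of limit and integral, which I would handle exactly as above via the crude bound $|\int_\Omega (q-q_n)\operatorname{div} g\,dx| \le \|\operatorname{div} g\|_{L^\infty}\|q-q_n\|_{L^1}$, so no dominated-convergence machinery is actually needed. One should also note the harmless edge case $\liminf_{n\to\infty}\int_\Omega |Dq_n| = +\infty$, for which the inequality is trivial; the argument above covers the finite case and in fact does not even require $q_n \in BV(\Omega)$ for every $n$.
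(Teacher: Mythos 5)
Your argument is correct: it is the standard lower-semicontinuity proof for the total variation (fix an admissible $g$, pass to the limit in $\int_\Omega q_n\,\operatorname{div} g\,dx$ using $\|\operatorname{div} g\|_{L^\infty}\|q-q_n\|_{L^1(\Omega)}\to 0$, then take the supremum over $g$), and the edge cases you mention are handled appropriately. The paper itself does not prove this lemma but simply quotes it from the cited reference on BV regularization, and your proof coincides with the classical argument given there.
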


Before presenting the weak formulation of the CEM, we introduce the product space $\mathbb{H}: = H^1(\Omega)\otimes\mathbb{R}_\diamond^L$ with its norm defined by
$$\Vert (u, U)\Vert_{\mathbb{H}}^2=\Vert u\Vert_{H^1(\Omega)}^2+\Vert U\Vert^2.$$ 
Then the weak formulation of the model \eqref{contieqn} reads: find $(u,U)\in \mathbb{H}$ such that
\begin{equation}\label{weakfor}
a(\sigma, (u, U), (v, V))=\langle I, V\rangle \quad \forall(v,V)\in \mathbb{H},
\end{equation}
where the map
 $a(\sigma, (u,U), (v, V)): \mathcal{A}\times\mathbb{H}\times\mathbb{H} \to \mathbb{R}$ is defined by
\beqn\nonumber
a(\sigma, (u, U), (v, V))=(\sigma\nabla u,\nabla v)+\sum_{l=1}^Lz_l^{-1}\langle u-U_l, v-V_l\rangle_{e_l},
\eqn
with $\langle \cdot ,\cdot \rangle_{e_l}$ being the inner product on $L^2(e_l)$.  
We have the following norm equivalence on the space $\mathbb{H}$ \cite[Lemma 2.1]{jin2016convergent}.
\begin{lem}\label{norm}
On the space $\mathbb{H}$, the norm $\Vert \cdot\Vert_{\mathbb{H}}$ is equivalent to the norm $\Vert\cdot\Vert_{\mathbb{H},\star}$ defined  by
\beqn\nb
\Vert (u, U)\Vert_{\mathbb{H},\star}^2=\Vert\nabla u\Vert_{L^2(\Omega)}^2+\sum_{l=1}^L\Vert u-U_l\Vert_{L^2(e_l)}^2.
\eqn
\end{lem}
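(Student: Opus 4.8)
The plan is to establish the two one-sided bounds separately: the inequality $\Vert (u,U)\Vert_{\mathbb{H},\star} \le C_1 \Vert (u,U)\Vert_{\mathbb{H}}$, which is elementary, and the reverse inequality $\Vert (u,U)\Vert_{\mathbb{H}} \le C_2 \Vert (u,U)\Vert_{\mathbb{H},\star}$, which is a Poincar\'e-type estimate that I would prove by a compactness (contradiction) argument. For the easy direction, given $(u,U)\in\mathbb{H}$, the trace theorem gives $\Vert u\Vert_{L^2(e_l)}\le \Vert u\Vert_{L^2(\Gamma)}\le C\Vert u\Vert_{H^1(\Omega)}$, while $|U_l|\le \Vert U\Vert$ and each $|e_l|$ is finite; together with $\Vert\nabla u\Vert_{L^2(\Omega)}\le \Vert u\Vert_{H^1(\Omega)}$ this yields $\Vert (u,U)\Vert_{\mathbb{H},\star}^2\le C_1^2\,\Vert (u,U)\Vert_{\mathbb{H}}^2$ at once.

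For the reverse inequality I would argue by contradiction. Assume no constant $C_2$ works; then there is a sequence $\{(u_n,U_n)\}\subset\mathbb{H}$ with $\Vert (u_n,U_n)\Vert_{\mathbb{H}}=1$ and $\Vert (u_n,U_n)\Vert_{\mathbb{H},\star}\to 0$. Boundedness of $\{u_n\}$ in $H^1(\Omega)$ and the Rellich--Kondrachov compact embedding $H^1(\Omega)\hookrightarrow L^2(\Omega)$ give a subsequence (not relabeled) with $u_n\to u$ in $L^2(\Omega)$; boundedness of $\{U_n\}$ in the finite-dimensional space $\mathbb{R}_\diamond^L$ gives, after a further subsequence, $U_n\to U$ in $\mathbb{R}_\diamond^L$. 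Since $\Vert\nabla u_n\Vert_{L^2(\Omega)}\to 0$, the sequence $\{u_n\}$ is Cauchy in $H^1(\Omega)$, so $u_n\to u$ in $H^1(\Omega)$ with $\nabla u=0$, hence $u\equiv c$ for a constant $c$ (using that $\Omega$ is connected). Continuity of the trace operator then gives $u_n|_{e_l}\to c$ in $L^2(e_l)$, and from $\Vert u_n-(U_n)_l\Vert_{L^2(e_l)}\to 0$ we get $U_l=c$ for every $l$. The constraint $U\in\mathbb{R}_\diamond^L$ then forces $Lc=\sum_{l=1}^L U_l=0$, so $c=0$, i.e. $u\equiv 0$ and $U=0$. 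But then $\Vert (u_n,U_n)\Vert_{\mathbb{H}}\to\Vert (0,0)\Vert_{\mathbb{H}}=0$, contradicting $\Vert (u_n,U_n)\Vert_{\mathbb{H}}=1$. This yields $C_2$ and completes the proof.

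The main obstacle is the compactness step together with the correct use of the zero-mean constraint defining $\mathbb{R}_\diamond^L$: without it the estimate is false, since any nonzero constant pair $(c,(c,\dots,c))$ has $\Vert\cdot\Vert_{\mathbb{H},\star}=0$ but $\Vert\cdot\Vert_{\mathbb{H}}\ne 0$. One also has to be careful that the boundary trace passes to the limit, which is exactly why the $H^1$ (not merely $L^2$) convergence of $u_n$ must be extracted first. Alternatively, one may simply invoke the abstract result of \cite{jin2016convergent} verbatim; but the self-contained argument sketched above is short and transparent.
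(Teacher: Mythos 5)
Your argument is correct, and it is essentially the standard Peetre--Tartar/compactness proof of this Poincar\'e-type equivalence. Note, however, that the paper itself does not prove Lemma~\ref{norm} at all: it simply quotes the result from \cite{jin2016convergent}, as you anticipate in your closing remark. So your contribution is a self-contained substitute for that citation rather than an alternative to an argument in the text. The two directions are handled soundly: the bound $\Vert(u,U)\Vert_{\mathbb{H},\star}\le C_1\Vert(u,U)\Vert_{\mathbb{H}}$ follows from the trace theorem exactly as you say, and in the contradiction step the extraction of strong $H^1$ convergence (via $L^2$ compactness plus $\Vert\nabla u_n\Vert_{L^2(\Omega)}\to 0$, making $\{u_n\}$ Cauchy in $H^1(\Omega)$) is precisely what is needed to pass the traces to the limit; the zero-mean constraint on $\mathbb{R}_\diamond^L$ then kills the constant, and your counterexample $(c,(c,\dots,c))$ correctly explains why this constraint is indispensable. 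Two small points worth making explicit if this were to be written out: you use that $\Omega$ is connected (so $\nabla u=0$ implies $u$ is constant), which is implicit in the paper's use of the word ``domain,'' and you use that each electrode $e_l$ has positive surface measure to conclude $U_l=c$ from $\Vert c-U_l\Vert_{L^2(e_l)}=0$; both hypotheses hold in the paper's setting. A minor stylistic alternative is the non-contradictory route via a Poincar\'e inequality with mean value over $\cup_l e_l$, but your compactness argument is equally legitimate and matches how such equivalences are usually proved in the EIT literature.
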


It follows Lemma \ref{norm} that $a(\sigma, \cdot,\cdot)$ is continuous and coercive in $\mathbb{H}$ 
for given $\sigma\in \mathcal{A}$, hence we have the existence and uniqueness of the solution $(u, U)$  
to \eqref{weakfor} \cite{Somersalo1992eit} by the Lax-Milgram theorem.
We further introduce the notation  $\mathcal{F}(\sigma)$ to denote the forward operator, i.e., $(u, U)=(u(\sigma), U(\sigma))=\mathcal{F}(\sigma)\in \mathbb{H}$. 
For simplicity, we suppress the dependence of the solution $(u,U)$ on the input current $I$.
We end this section with several continuity results of the forward operator $\mathcal{F}(\sigma)$ \cite[Lemma 3.1, Lemma 3.2 ]{jin2012analysis}.
\begin{lem}
\label{lemma2}
The operator $\mathcal{F}(\sigma):\mathcal{A}\to \mathbb{H}$ is uniformly bounded for a fixed $I$.
\end{lem}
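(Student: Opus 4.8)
The plan is to run the standard energy estimate on the weak formulation \eqref{weakfor}, taking the test pair to be the solution itself, and then to check that the coercivity constant produced in this way depends only on the fixed data $\lambda$, $\{z_l\}_{l=1}^L$, $\Omega$ and $\{e_l\}_{l=1}^L$, but \emph{not} on the particular $\sigma\in\mathcal{A}$. Concretely, for a fixed $I\in\mathbb{R}_\diamond^L$ let $\lt(u,U\rt)=\mathcal{F}\lt(\sigma\rt)I$, which is well defined by the Lax--Milgram argument recalled after Lemma~\ref{norm}. Choosing $\lt(v,V\rt)=\lt(u,U\rt)$ in \eqref{weakfor} gives
\beqn\nb
\lt(\sigma\nabla u,\nabla u\rt)+\sum_{l=1}^L z_l^{-1}\Vert u-U_l\Vert_{L^2\lt(e_l\rt)}^2=\langle I, U\rangle\, .
\eqn

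For the left-hand side I would use $\sigma\lt(x\rt)\geq\lambda$ a.e.\ in $\Omega$ together with $z_l^{-1}\geq\lt(\max_l z_l\rt)^{-1}$, so that the left-hand side is bounded below by $\min\{\lambda,\,(\max_l z_l)^{-1}\}\,\Vert\lt(u,U\rt)\Vert_{\mathbb{H},\star}^2$; then Lemma~\ref{norm} upgrades this to $c_0\,\Vert\lt(u,U\rt)\Vert_{\mathbb{H}}^2$ with $c_0>0$ depending only on $\lambda$, $\{z_l\}$ and the norm-equivalence constant, hence independent of $\sigma$. For the right-hand side, Cauchy--Schwarz in $\mathbb{R}^L$ gives $|\langle I,U\rangle|\leq\Vert I\Vert\,\Vert U\Vert\leq\Vert I\Vert\,\Vert\lt(u,U\rt)\Vert_{\mathbb{H}}$. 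Combining the two bounds yields $c_0\Vert\lt(u,U\rt)\Vert_{\mathbb{H}}^2\leq\Vert I\Vert\,\Vert\lt(u,U\rt)\Vert_{\mathbb{H}}$, and therefore $\Vert\mathcal{F}\lt(\sigma\rt)I\Vert_{\mathbb{H}}=\Vert\lt(u,U\rt)\Vert_{\mathbb{H}}\leq c_0^{-1}\Vert I\Vert$ for every $\sigma\in\mathcal{A}$.

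The only point that requires care — and the ``main obstacle'' such as it is — is precisely the claim that the coercivity constant $c_0$ can be taken uniform over the admissible set; this is exactly where the lower bound $\sigma\geq\lambda$ built into $\mathcal{A}$ is used, and why the definition of $\mathcal{A}$ is stated with a fixed $\lambda\in(0,1)$. Everything else (the norm equivalence, Lax--Milgram, Cauchy--Schwarz) is routine. Since $c_0^{-1}\Vert I\Vert$ does not depend on $\sigma$, the family $\{\mathcal{F}\lt(\sigma\rt)I:\sigma\in\mathcal{A}\}$ is bounded in $\mathbb{H}$, which is the assertion of Lemma~\ref{lemma2}. (As a by-product the same computation shows $\sup_{\sigma\in\mathcal{A}}\Vert\mathcal{F}\lt(\sigma\rt)\Vert_{\mathcal{L}(\mathbb{R}_\diamond^L,\mathbb{H})}\leq c_0^{-1}$, i.e.\ uniform boundedness as operators, but the statement as given only needs the fixed-$I$ version.)
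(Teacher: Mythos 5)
Your proof is correct: the paper itself does not write out a proof of this lemma but defers to \cite{jin2012analysis}, and your energy estimate (testing \eqref{weakfor} with the solution, using $\sigma\geq\lambda$ on $\mathcal{A}$ and the norm equivalence of Lemma~\ref{norm} to get a $\sigma$-independent coercivity constant) is exactly the standard argument behind that citation and the Lax--Milgram discussion in Section~\ref{sec2}. Nothing further is needed.
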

\begin{lem}
\label{staF}
For the operator $\mathcal{F}(\sigma)$ and $\sigma$, $\sigma+\vartheta\in \mathcal{A}$, 
we have the following continuity estimates:
\begin{enumerate}
\item For any $p\in \left(\frac{2Q(\lambda)}{Q(\lambda)-2},\infty\right]$, 
$$\Vert \mathcal{F}(\sigma+\vartheta)-\mathcal{F}(\sigma)\Vert_{\mathbb{H}}\leq C\Vert \vartheta\Vert_{L^p(\Omega)};$$
\item For any $p\in \left(\frac{4Q(\lambda)}{Q(\lambda)-2},\infty\right]$, there exists a $q\in (2, Q(\lambda))$ such that
\beqn\nb
\Vert u(\sigma+\vartheta)-u(\sigma)\Vert_{W^{1,q}(\Omega)}\leq C\Vert \vartheta\Vert_{L^p(\Omega)};
\eqn

\item For $p\geq 1$ and any $q\in (2, Q(\lambda))$, 
\beqn\nb
\lim_{\Vert\vartheta\Vert_{L^p(\Omega)}\to 0}\Vert  u(\sigma+\vartheta)-u(\sigma)\Vert_{W^{1,q}(\Omega)}=0,
\eqn
\end{enumerate}
where $Q(\lambda)>2$ is a constant  depending on $d$ and $\lambda$ only and tending to $\infty$ and $2$ as $\lambda\to 1$ and $\lambda\to 0$ respectively.
\end{lem}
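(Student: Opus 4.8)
The plan is to obtain all three estimates from the variational identity \eqref{weakfor}, the norm equivalence of Lemma~\ref{norm}, and a Meyers-type higher integrability estimate asserting that, uniformly over $\sigma\in\mathcal{A}$, the CEM potential satisfies $\nabla u\in L^{r}\lt(\Omega\rt)$ with a uniform bound for every $r<Q\lt(\lambda\rt)$, together with an analogous $W^{1,q}$ bound for the associated divergence-form problem. We sketch the three steps; the full details follow \cite{jin2012analysis}.

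First, for part (1), I would set $\lt(u_\vartheta, U_\vartheta\rt)=\mathcal{F}\lt(\sigma+\vartheta\rt)I$, $\lt(u, U\rt)=\mathcal{F}\lt(\sigma\rt)I$, $w=u_\vartheta-u$ and $W=U_\vartheta-U$; subtracting the two instances of \eqref{weakfor} and moving the coefficient perturbation to the right-hand side gives
\beqn\nb
a\lt(\sigma+\vartheta,\lt(w, W\rt),\lt(v, V\rt)\rt)=-\lt(\vartheta\nabla u,\nabla v\rt)\, ,\qquad\forall\, \lt(v, V\rt)\in\mathbb{H}\, .
\eqn
Testing with $\lt(v, V\rt)=\lt(w, W\rt)$, using the coercivity of $a\lt(\sigma+\vartheta,\cdot,\cdot\rt)$ on $\mathbb{H}$ (a consequence of Lemma~\ref{norm} and $\sigma+\vartheta\in\mathcal{A}$), and then Hölder's inequality with exponents $p,r,2$ satisfying $\tfrac1p+\tfrac1r+\tfrac12=1$, I would arrive at
\beqn\nb
\Vert\lt(w, W\rt)\Vert_{\mathbb{H}}^2\leq C\, \Vert\vartheta\Vert_{L^p\lt(\Omega\rt)}\, \Vert\nabla u\Vert_{L^r\lt(\Omega\rt)}\, \Vert\nabla w\Vert_{L^2\lt(\Omega\rt)}\, .
\eqn
The higher integrability estimate controls $\Vert\nabla u\Vert_{L^r\lt(\Omega\rt)}$ uniformly as long as $r<Q\lt(\lambda\rt)$, and the constraint $\tfrac1r=\tfrac12-\tfrac1p<\tfrac12-\tfrac1{Q\lt(\lambda\rt)}$ is precisely $p>\tfrac{2Q\lt(\lambda\rt)}{Q\lt(\lambda\rt)-2}$; cancelling one power of $\Vert\nabla w\Vert_{L^2\lt(\Omega\rt)}\leq\Vert\lt(w, W\rt)\Vert_{\mathbb{H}}$ then yields part (1).

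For part (2), I would note that $w$ is the weak solution of a second-order elliptic problem with coefficient $\sigma+\vartheta\in\mathcal{A}$, divergence-form forcing $\nabla\cdot\lt(\vartheta\nabla u\rt)$ and CEM-type Robin/Neumann data on $\Gamma$, and apply the $W^{1,q}$ Meyers estimate for this problem together with the bound on $\Vert\lt(w, W\rt)\Vert_{\mathbb{H}}$ from part (1). Writing $\Vert\vartheta\nabla u\Vert_{L^q\lt(\Omega\rt)}\leq\Vert\vartheta\Vert_{L^p\lt(\Omega\rt)}\Vert\nabla u\Vert_{L^s\lt(\Omega\rt)}$ with $\tfrac1q=\tfrac1p+\tfrac1s$ and using once more the uniform bound on $\Vert\nabla u\Vert_{L^s\lt(\Omega\rt)}$ for $s<Q\lt(\lambda\rt)$, then matching the admissible exponents and accounting for the loss incurred in handling the boundary contributions, one is led to the threshold $p>\tfrac{4Q\lt(\lambda\rt)}{Q\lt(\lambda\rt)-2}$ and to the estimate of part (2). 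Part (3) then follows by a soft interpolation argument: since $\sigma,\sigma+\vartheta\in\mathcal{A}$ we have $\Vert\vartheta\Vert_{L^\infty\lt(\Omega\rt)}\leq\lambda^{-1}-\lambda$, so for every finite $r\geq p$ the interpolation inequality $\Vert\vartheta\Vert_{L^r\lt(\Omega\rt)}\leq\Vert\vartheta\Vert_{L^p\lt(\Omega\rt)}^{p/r}\Vert\vartheta\Vert_{L^\infty\lt(\Omega\rt)}^{1-p/r}$ shows that $\Vert\vartheta\Vert_{L^p\lt(\Omega\rt)}\to0$ forces $\Vert\vartheta\Vert_{L^r\lt(\Omega\rt)}\to0$; choosing $r$ above the threshold of part (2) and invoking part (2) gives part (3).

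The main obstacle is the Meyers-type higher integrability estimate underlying Steps~1 and~2: one must establish the existence of an exponent $Q\lt(\lambda\rt)>2$, depending only on the dimension and the ellipticity bound $\lambda$ (hence uniform over $\mathcal{A}$), such that weak solutions of \eqref{contieqn} --- and of the divergence-form problem satisfied by $w$ --- have gradients in $L^{Q\lt(\lambda\rt)}\lt(\Omega\rt)$ with a uniform bound. This requires a reverse-Hölder/Gehring-lemma argument adapted to the mixed Robin--Neumann boundary conditions of the CEM and to the polygonal geometry of $\Omega$; once it is in place, the remaining ingredients --- coercivity, Hölder's inequality and interpolation --- are routine, and I would follow \cite{jin2012analysis} for the technical details.
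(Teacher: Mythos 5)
The paper does not actually prove this lemma: it is stated with a pointer to \cite{jin2012analysis}, and your sketch reconstructs essentially that same argument --- subtract the two weak formulations \eqref{weakfor}, test with the difference, use coercivity via Lemma~\ref{norm} and H\"older with exponents $p,r,2$, and control $\Vert\nabla u\Vert_{L^r(\Omega)}$ uniformly for $r<Q(\lambda)$ by a Meyers-type higher-integrability estimate adapted to the CEM boundary conditions, with part (2) following from the corresponding $W^{1,q}$ estimate for the divergence-form problem satisfied by $w$. The one point to tighten is part (3): as written, interpolating $\vartheta$ against its $L^\infty$ bound and then invoking part (2) yields convergence only in $W^{1,q'}$ for the particular exponent $q'$ that part (2) supplies (hence for $q\le q'$ by inclusion on the bounded domain), whereas the statement asserts convergence for every $q\in(2,Q(\lambda))$; the standard remedy, used in \cite{jin2012analysis}, is to combine convergence in one exponent with the uniform bound $\sup_{\sigma\in\mathcal{A}}\Vert\nabla u(\sigma)\Vert_{L^s(\Omega)}<\infty$ for $s<Q(\lambda)$ (which your Meyers estimate already provides) and interpolate between the two Lebesgue exponents. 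With that line added, your sketch coincides with the intended (cited) proof.
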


\section{{Bounded variation regularization}}\label{sec3}

In this section, we will formulate the minimization problem for the EIT  inverse problem based on BV regularization and investigate some of its analytic properties.

Recall that the EIT inverse problem is to reconstruct the conductivity $\sigma$ from noisy measurements $U^\delta$ of the exact electrode voltage $U(\sigma^\dag)$, corresponding to one or multiple input currents. 
For the analysis, we will use the data generated by one set of input currents.
As EIT is severely ill-posed,  some appropriately chosen regularization is crucial 
to combat the numerical instability and generate physically meaningful images. 
For this purpose, we use the BV regularization and 
the following least-squares approach to reconstruct $\sigma^\dag$: 
%
\begin{equation}\label{mini}
\min_{\sigma\in\mathcal{A}}\left\{J(\sigma)=\frac{1}{2}\Vert U(\sigma)-U^\delta\Vert^2+\alpha N(\sigma)\right\},
\end{equation}
where $N(\sigma)$ is the BV-penalty term, 
i.e., 
$ N(\sigma)=\int_\Omega |D\sigma|\, $, 
and $\alpha$ is a scalar compromising the discrepancy term and penalty term. 
The following theorem states that the minimization problem \eqref{mini} has at least one solution.  
\begin{thm}\label{exist}
There exists at least one minimizer to problem \eqref{mini}.
\end{thm}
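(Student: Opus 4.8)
The plan is to run the direct method of the calculus of variations. First I would note that $J(\sigma)\ge 0$ for every $\sigma\in\mathcal{A}$ and that $\mathcal{A}$ is nonempty (it contains all constants in $[\lambda,\lambda^{-1}]$), so that $m:=\inf_{\sigma\in\mathcal{A}}J(\sigma)$ is a finite nonnegative number. Pick a minimizing sequence $\{\sigma_n\}_{n=1}^\infty\subset\mathcal{A}$ with $J(\sigma_n)\to m$. Since $\frac{\alpha}{2}N(\sigma_n)\le J(\sigma_n)$ is bounded and $\|\sigma_n\|_{L^1(\Omega)}\le \lambda^{-1}|\Omega|$ by the pointwise bound in the definition of $\mathcal{A}$, the sequence $\{\sigma_n\}$ is bounded in $BV(\Omega)$.

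Next I would extract a limit. By Lemma~\ref{bv_in_L1} there is a subsequence (still denoted $\{\sigma_n\}$) and some $\sigma^\ast\in BV(\Omega)$ with $\sigma_n\to\sigma^\ast$ in $L^1(\Omega)$. Passing to a further subsequence we may also assume $\sigma_n\to\sigma^\ast$ a.e.\ in $\Omega$, which preserves the bilateral bound $\lambda\le\sigma^\ast\le\lambda^{-1}$ a.e., so that $\sigma^\ast\in\mathcal{A}$. The lower semicontinuity of the penalty, Lemma~\ref{lsc_tv}, gives
\beqn\nb
N(\sigma^\ast)=\int_\Omega|D\sigma^\ast|\le\liminf_{n\to\infty}\int_\Omega|D\sigma_n|=\liminf_{n\to\infty}N(\sigma_n)\, .
\eqn

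The key step is the continuity of the data-fidelity term along $\{\sigma_n\}$. Here I would upgrade $L^1$ convergence to $L^p$ convergence for a large exponent: since $\sigma_n,\sigma^\ast\in\mathcal{A}$ are uniformly bounded by $\lambda^{-1}$, for any $p\in[1,\infty)$ one has $\|\sigma_n-\sigma^\ast\|_{L^p(\Omega)}^p\le(2\lambda^{-1})^{p-1}\|\sigma_n-\sigma^\ast\|_{L^1(\Omega)}\to 0$. Choosing $p>\frac{2Q(\lambda)}{Q(\lambda)-2}$ and applying Lemma~\ref{staF}(1) yields $\|\mathcal{F}(\sigma_n)-\mathcal{F}(\sigma^\ast)\|_{\mathbb{H}}\le C\|\sigma_n-\sigma^\ast\|_{L^p(\Omega)}\to 0$, hence in particular $\|U(\sigma_n)-U(\sigma^\ast)\|\to 0$ and therefore $\tfrac12\|U(\sigma_n)-U^\delta\|^2\to\tfrac12\|U(\sigma^\ast)-U^\delta\|^2$.

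Finally I would combine the two pieces:
\beqn\nb
J(\sigma^\ast)=\frac12\|U(\sigma^\ast)-U^\delta\|^2+\frac{\alpha}{2}N(\sigma^\ast)\le\lim_{n\to\infty}\frac12\|U(\sigma_n)-U^\delta\|^2+\frac{\alpha}{2}\liminf_{n\to\infty}N(\sigma_n)\le\liminf_{n\to\infty}J(\sigma_n)=m\, .
\eqn
Since $\sigma^\ast\in\mathcal{A}$ forces $J(\sigma^\ast)\ge m$, we conclude $J(\sigma^\ast)=m$, so $\sigma^\ast$ is a minimizer. I expect the main obstacle to be the continuity step: one must be careful that the hypotheses of Lemma~\ref{staF}(1) (the range of admissible $p$) are met, which is exactly why the uniform $L^\infty$ bound built into $\mathcal{A}$ is used to promote the $L^1$ convergence supplied by the $BV$ compactness to convergence in a sufficiently high $L^p$ norm. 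Everything else (nonnegativity, boundedness of the minimizing sequence, passing the pointwise constraint to the limit) is routine.
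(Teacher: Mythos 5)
Your proof is correct, but the key step is handled by a genuinely different route than the paper's. The paper also runs the direct method (boundedness of the minimizing sequence in $BV$, compactness via Lemma~\ref{bv_in_L1}, lower semicontinuity via Lemma~\ref{lsc_tv}), but for the continuity of the data-fidelity term it does not invoke Lemma~\ref{staF} at all: it extracts, from the uniform bound of Lemma~\ref{lemma2} and the norm equivalence of Lemma~\ref{norm}, weak limits $u_n \rightharpoonup u^\star$ in $H^1(\Omega)$, $u_n-U_{n,l}\rightharpoonup u^\star-U_l^\star$ in $L^2(e_l)$ and $U_n\to U^\star$, and then passes to the limit in the variational identity \eqref{weakfor} (handling $(\sigma_n\nabla u_n,\nabla v)$ by Cauchy--Schwarz, a.e.\ convergence of a subsequence of $\sigma_n$ and dominated convergence) to identify $(u^\star,U^\star)=\mathcal{F}(\sigma^\star)$, whence $U(\sigma_n)\to U(\sigma^\star)$. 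You instead upgrade the $L^1$ convergence to $L^p$ convergence for a finite $p>\frac{2Q(\lambda)}{Q(\lambda)-2}$ using the uniform bound $\lambda\le\sigma\le\lambda^{-1}$ built into $\mathcal{A}$, and then apply the Lipschitz-type estimate of Lemma~\ref{staF}(1); this is exactly the mechanism the paper reserves for the stability result (Theorem~\ref{THM3}), so your argument is shorter and consistent with the paper's toolkit, but it leans on the quantitative continuity of the forward map (a Meyers-type estimate with the exponent restriction through $Q(\lambda)$), and the uniformity of the constant $C$ over $\mathcal{A}$. The paper's longer argument buys independence from that estimate — only weak compactness and the weak formulation are used — and it explicitly identifies the limit pair $(u^\star,U^\star)$ as the forward solution at $\sigma^\star$, which is more information than the convergence of the electrode voltages alone. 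Both proofs are complete for the stated theorem; your treatment of the admissibility of the limit (a.e.\ convergence preserving the bilateral bounds) is in fact slightly more careful than the paper's, which asserts $\sigma^\star\in\mathcal{A}$ directly from Lemma~\ref{bv_in_L1}.
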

\begin{proof}
Lemma~\ref{lemma2} implies that
$\inf J(\sigma)$ is finite over $\mathcal{A}$. Then there exists a minimizing sequence $\{\sigma_n\}_{n=1}^\infty \subset \mathcal{A}$ such that
\beqn\nb
\lim_{n\to\infty}J(\sigma_n)=\inf_{\sigma\in\mathcal{A}} J(\sigma) .
\eqn
To simplify the notations, we denote $ u(\sigma_n)$ by $u_n$ and $U(\sigma_n)$ by $U_n  =(U_{n,1}, U_{n,2},...,U_{n,L})^t$.
It follows from the definition of $J$ that the sequences $\{\Vert U_n-U^\delta\Vert\}_{n=1}^\infty$ and 
$\{\Vert \sigma_n\Vert_{BV}\}_{n=1}^\infty$ are uniformly bounded. 
Applying Lemma~\ref{lemma2} again, we derive that $\{ u_n\}_{n=1}^\infty$ is also uniformly bounded in $H^1(\Omega)$. 
Moreover, from the norm equivalence in Lemma~\ref{norm}, we deduce that
$\{\Vert u_n-U_{n,l}\Vert_{e_l}\}_{n=1}^\infty$ is bounded for $l=1,2,\ldots, L$. 
Thus, following the compact embedding of $BV(\Omega)$ in $L^1(\Omega)$ from Lemma~\ref{lsc_tv}, there exist $\sigma^\star\in \mathcal{A}$, $(u^\star, U^\star)\in \mathbb{H}$, and a subsequence, still denoted by $\{\sigma_n\}_{n=1}^\infty$, such that $\{\sigma_n\}_{n=1}^\infty$ converges to $\sigma^\star$ in $L^1$ norm, 
\beqnx\nb
&&u_n\to u^\star  \mbox{ weakly in } H^1(\Omega)\mbox{ and weakly in } L^2(\Omega),\\
&& \ u_n-U_{n,l}\to  u^\star-U^\star_l \mbox{ weakly in } L^2(e_l) \mbox{ and } U_{n,l}\to  U^\star_l \ \mbox{for}\ \mbox{all} \ l.
\eqnx
%
%
Next we prove that $(u^\star, U^\star)=\mathcal{F}(\sigma^\star) $.
By definition, $(u_n, U_n)= (u(\sigma_n),U(\sigma_n)) $ satisfies
\begin{equation}\label{seqeqn}
(\sigma_n\nabla u_n,\nabla v)+\sum_{l=1}^Lz_l^{-1}\langle u_n-U_{n, l}, v-V_l\rangle_{e_l}=\langle I, V\rangle\,   \quad \forall(v,V)\in \mathbb{H} .
\end{equation}
We will analyze the convergence of each term on the left-hand side of \eqref{seqeqn}. For the first term, we observe that 
\begin{eqnarray}
(\sigma_n\nabla u_n,\nabla v)-(\sigma^\star\nabla u^\star,\nabla v)&=&((\sigma_n-\sigma^\star)\nabla u_n,\nabla v)+(\sigma^\star\nabla(u_n-u^\star),\nabla v)\nonumber\\
&\leq&( \int_\Omega  |\sigma_n-\sigma^\star|\ |\nabla u_n|^2\, dx)^{1/2}( \int_\Omega  |\sigma_n-\sigma^\star|\ |\nabla v|^2\, dx)^{1/2} \nonumber\\
&&+(\sigma^\star\nabla(u_n-u^\star), \nabla v)\nonumber\\
&\leq&C( \int_\Omega  |\sigma_n-\sigma^\star|\ |\nabla v|^2\, dx)^{1/2}+(\sigma^\star\nabla(u_n-u^\star), \nabla v),\label{existineq}
\end{eqnarray}
where we have used $\sigma_n\in \mathcal{A}$ and that $\{ u_n\}_{n=1}^\infty$ is uniformly bounded in $H^1(\Omega)$. 
Since $\sigma_n\to \sigma^\star$ in $L^1(\Omega)$, there exists a subsequence of $\{\sigma_n\}_{n=1}^\infty$, still denoted by $\{\sigma_n\}_{n=1}^\infty$, that converges to $\sigma^\star$ pointwise almost everywhere.
By Lebesgue's dominated convergence theorem and the weak convergence of $\{u_n\}_{n=1}^\infty$,  we derive from \eqref{existineq} that
\beqn\nb
\lim_{n\to \infty}(\sigma_n\nabla u_n,\nabla v)=(\sigma^\star\nabla u^\star,\nabla v).
\eqn
Next, we consider the second term on the left-hand side of \eqref{seqeqn}. It follows from the weak convergence of $\{u_n-U_{n,l}\}_{n=1}^\infty$ that 
\beqn\nb
\lim_{n\to \infty}\sum_{l=1}^Lz_l^{-1}\la u_n-U_{n, l}, v-V_l\ra_{e_l}=\sum_{l=1}^Lz_l^{-1}\la u^\star-U_l^\star, v-V_l\ra_{e_l}.
\eqn
Upon taking into account these relations, one can deduce from \eqref{seqeqn} that
\begin{eqnarray*}
(\sigma^\star\nabla u^\star,\nabla v)+\sum_{l=1}^Lz_l^{-1}\la u^\star-U_l^\star, v-V_l\ra _{e_l}=\la I, V\ra,
\end{eqnarray*}
i.e.,  $(u^\star, U^\star)=F(\sigma^\star)$.
It remains to show that $\sigma^\star$ is indeed a minimizer of $J$.
Since $U_{n,l} \to U_l^\star$ in $L^2(e_l)$ for $l = 1,2,\ldots,L$, we have \begin{equation}\label{convU}
\lim_{n\to \infty}\frac{1}{2}\Vert U(\sigma_n)-U^\delta\Vert^2=\frac{1}{2}\Vert U(\sigma^\star)-U^\delta\Vert^2,
\end{equation}
which, together with Lemma~\ref{lsc_tv}, implies
\begin{eqnarray}
\nonumber J(\sigma^\star)&=&\frac{1}{2}\Vert U(\sigma^\star)-U^\delta\Vert^2+\alpha N(\sigma^\star)\\
\nonumber &=&\lim_{n\to \infty}\frac{1}{2}\Vert U(\sigma_n)-U^\delta\Vert^2+\alpha \int_\Omega|D\sigma^\star|\\
\nonumber &\leq&\lim_{n\to \infty}\frac{1}{2}\Vert U(\sigma_n)-U^\delta\Vert^2+\liminf\limits_{n\to\infty} \alpha \int_\Omega|D\sigma_n|\\
\nonumber &\leq&\liminf\limits_{n\to\infty} J(\sigma_n)\\
\nonumber &=&\inf_{\sigma \in \mathcal{A}}J(\sigma).
\end{eqnarray}
Hence $\sigma^\star$ is a minimizer of problem \eqref{mini}.
\end{proof}

The following theorem states that the solution to \eqref{mini} depends continuously on the data perturbation, that is,  the proposed regularized least-squares approach \eqref{mini} is well-posed.
\begin{thm} \label{THM3}
Let $\{U_n^\delta\}_{n=1}^\infty\subset \mathbb{R}^L_\diamond$ be a sequence of noisy data converging to $U^{\delta}$, and $\sigma_n$ be a minimizer to $J$ with  $U^\delta$ replaced by $U^\delta_n$ in \eqref{mini}. Then the sequence $\{\sigma_n\}_{n=1}^\infty$ has a subsequence converging  in $L^1(\Omega)$ to a minimizer of $J$.
\end{thm}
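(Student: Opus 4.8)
The plan is to mimic the compactness-and-lower-semicontinuity argument used in the proof of Theorem~\ref{exist}, now applied to the perturbed functionals. First I would record the relevant uniform bounds: since each $\sigma_n$ minimizes the functional with data $U_n^\delta$, comparing with a fixed competitor (say a constant conductivity $\sigma_0\in\mathcal{A}$) gives
\beqn\nb
\frac{1}{2}\Vert U(\sigma_n)-U_n^\delta\Vert^2+\frac{\alpha}{2}N(\sigma_n)\leq \frac{1}{2}\Vert U(\sigma_0)-U_n^\delta\Vert^2+\frac{\alpha}{2}N(\sigma_0)\, .
\eqn
Because $\{U_n^\delta\}$ converges, its norms are uniformly bounded, and by Lemma~\ref{lemma2} so is $\Vert U(\sigma_0)\Vert$; hence the right-hand side is bounded independently of $n$, which yields that $\{N(\sigma_n)\}$ and $\{\Vert U(\sigma_n)-U_n^\delta\Vert\}$ are bounded. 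Since $\sigma_n\in\mathcal{A}$, the $L^1(\Omega)$ norms of $\sigma_n$ are bounded by $\lambda^{-1}|\Omega|$, so $\{\sigma_n\}$ is bounded in $BV(\Omega)$.

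Next I would extract a subsequence. By Lemma~\ref{bv_in_L1} there is a subsequence (not relabeled) and $\sigma^\star\in BV(\Omega)$ with $\sigma_n\to\sigma^\star$ in $L^1(\Omega)$; passing to a further subsequence we get pointwise a.e.\ convergence, which together with $\lambda\le\sigma_n\le\lambda^{-1}$ forces $\sigma^\star\in\mathcal{A}$. Using the continuity of the forward operator (Lemma~\ref{staF}, part 3, with $p=1$, or more simply the $L^1$-to-$\mathbb{H}$ type continuity), the $L^1$ convergence $\sigma_n\to\sigma^\star$ implies $\mathcal{F}(\sigma_n)\to\mathcal{F}(\sigma^\star)$ in $\mathbb{H}$, and in particular $U(\sigma_n)\to U(\sigma^\star)$ in $\mathbb{R}^L$. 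Combining this with $U_n^\delta\to U^\delta$ gives $\Vert U(\sigma_n)-U_n^\delta\Vert\to\Vert U(\sigma^\star)-U^\delta\Vert$, so the discrepancy term converges; the penalty term is handled by the lower semicontinuity in Lemma~\ref{lsc_tv}, $N(\sigma^\star)\le\liminf_n N(\sigma_n)$.

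Finally I would verify optimality of $\sigma^\star$ for the limiting functional $J$ (with data $U^\delta$). For an arbitrary $\sigma\in\mathcal{A}$, minimality of $\sigma_n$ for the perturbed functional gives
\beqn\nb
\frac{1}{2}\Vert U(\sigma_n)-U_n^\delta\Vert^2+\frac{\alpha}{2}N(\sigma_n)\leq \frac{1}{2}\Vert U(\sigma)-U_n^\delta\Vert^2+\frac{\alpha}{2}N(\sigma)\, .
\eqn
Taking $\liminf$ on the left (using convergence of the discrepancy term and lower semicontinuity of $N$) and $\lim$ on the right (using only $U_n^\delta\to U^\delta$, since $\sigma$ is fixed) yields $J(\sigma^\star)\le J(\sigma)$ for all $\sigma\in\mathcal{A}$; hence $\sigma^\star$ is a minimizer of $J$. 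The main obstacle, and the point deserving the most care, is the justification that $L^1$ convergence of the conductivities implies convergence of the electrode voltages $U(\sigma_n)$; the cited continuity estimates in Lemma~\ref{staF} are stated for perturbations measured in $L^p$ with $p$ bounded below by a constant depending on $\lambda$, not for $p=1$. One resolves this by exploiting that $\sigma_n-\sigma^\star$ is uniformly bounded in $L^\infty$ (so bounded in every $L^p$) while converging to $0$ in $L^1$; interpolation between $L^1$ and $L^\infty$ then gives convergence to $0$ in $L^p$ for the required $p$, after which Lemma~\ref{staF} applies directly. The remaining steps are the routine compactness and semicontinuity bookkeeping already exhibited in the proof of Theorem~\ref{exist}.
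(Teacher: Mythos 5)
Your proposal is correct and follows essentially the same route as the paper's proof: uniform boundedness of $\{N(\sigma_n)\}$ by comparison with a fixed admissible competitor, BV--$L^1$ compactness to extract a limit $\sigma^\star$, continuity of the forward map to pass to the limit in the discrepancy term, lower semicontinuity of the total variation, and the minimality inequality $J^n(\sigma_n)\leq J^n(\sigma)$ to conclude. You are in fact slightly more careful than the paper on two points it glosses over: checking that the $L^1$ limit remains in $\mathcal{A}$ via pointwise a.e.\ convergence, and the $L^\infty$--$L^1$ interpolation needed so that Lemma~\ref{staF} (stated for $L^p$ perturbations with $p$ large) can be applied to the merely $L^1$-convergent sequence.
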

\begin{proof}
We denote by $J^n$ the functional with  $U^\delta$ replaced by $U_n^\delta$ in $J$, i.e.,
$$ J^n(\sigma) = \dfrac{1}{2}\Vert U(\sigma)-U^\delta_n\Vert^2+\alpha N(\sigma). $$
Since $\sigma_n$ minimizes the functional $J^n$ over $\mathcal{A}$, for all $n\geq 1$, we have 
$$J^n(\sigma_n) \leq J^n(1) \leq \| U(1) \|^2 + \sup_{m \geq 1} \| U^\delta_m \|^2,$$ 
where we have used the convergence and therefore the boundedness of $\{U_n^\delta\}_{n=1}^\infty$.  Consequently, we know that $\{N(\sigma_n)\}_{n=1}^\infty$ is bounded, and so is $\{\Vert \sigma_n\Vert_{BV(\Omega)}\}_{n=1}^\infty$. 
By the compact embedding of $BV(\Omega)$ in $L^1(\Omega)$ from Lemma~\ref{lsc_tv}, there exist $\sigma^\star \in \mathcal{A}$ and a subsequence of $\{\sigma_n\}_{n=1}^\infty$, still denoted by $\{\sigma_n\}_{n=1}^\infty$, such that $\sigma_n\to \sigma^\star$ in $L^1(\Omega)$ as $n\to\infty$. Now applying the  Cauchy-Schwarz inequality and the triangle inequality, we can deduce
\begin{align*}
|\Vert U(\sigma_n)-U^\delta_n\Vert^2-\Vert U(\sigma^\star)-U^\delta\Vert^2|
&=| \la U(\sigma_n)-U^\delta_n-U(\sigma^\star)+U^\delta, U(\sigma_n)-U^\delta_n+U(\sigma^\star)-U^\delta\ra |\\
&\leq ( \Vert U(\sigma_n)-U(\sigma^\star)\Vert+ \Vert U^\delta_n-U^\delta\Vert)\cdot \Vert U(\sigma_n)-U^\delta_n+U(\sigma^\star)-U^\delta\Vert\, .
\end{align*}
With Lemma \ref{staF}, one obtains that $\Vert U(\sigma_n)-U^\delta_n\Vert^2\to \Vert U(\sigma^\star)-U^\delta\Vert^2$  as $n\to\infty$. Then for any $\sigma \in \mathcal{A}$,  it follows Lemma~\ref{lsc_tv} that
\begin{align*}
J(\sigma^\star)
& = \frac{1}{2}\Vert U(\sigma^\star)-U^\delta\Vert^2 +\alpha N(\sigma^\star) \\
& \leq \lim_{n\to\infty}  \frac{1}{2}\Vert U(\sigma_n)-U^\delta_n\Vert^2 +\alpha\liminf\limits_{n\to\infty}  N(\sigma_n) \\
&\leq \liminf\limits_{n\to\infty}J^n(\sigma_n) \\
&\leq \liminf\limits_{n\to\infty}J^n(\sigma) \\
&=J(\sigma),
\end{align*}
which implies that $\sigma^\star$ is a minimizer of \eqref{mini}, completing the proof of this theorem.
\end{proof}

\section{Weak Galerkin method and error analysis}\label{sec4}

In this section, we propose the WG algorithm for discretizing the variational formulation \eqref{weakfor}, then develop the error analysis of this WG scheme  for the forward process.
 \subsection{WG formulation}
To discretize the variational formulation \eqref{weakfor}, we first triangulate the domain $\Omega$. Let $\mathcal{T}_h$ be the shape regular triangulation of the polyhedral domain $\overline{\Omega}$ consisting of closed simplicial elements, with a local mesh size $h_T:=|T|^{1/d}$ for each element $T\in \mathcal{T}_h$. We further assume that each element $T$ intersects at most one electrode surface $e_l$, and denote the mesh size of $\mathcal{T}_h$ by $h=\max\nolimits_{T\in\mathcal{T}_h} h_T$.  

We first recall an important concept of the WG method, weak function \cite{wang2013weak}. A weak function on a region $K$  refers to a function $v = \{v_0, v_b\}$, where $v_0 \in L^2(K)$ and $v_b\in L^2(\partial K)$. The first component
$v_0$ could be understood as the value of $v$ in $K$, and the second component $v_b$ represents $v$ on the
boundary of $K$, while $v_b$ may not necessarily be related to the trace of $v_0$ on $\partial K$ should a trace be well-defined. 
We define the space of weak functions on each $T\in \mathcal{T}_h$ by
\beqn\nonumber
S_w(T)=\{v=\{v_0,v_b\}: v_0\in L^2(T) ,\  v_b\in L^2(\partial T)\}\, ,
\eqn
and the finite element space on each $T$ by
\beqn\nb
S(k,T)=\{v_h = \{v_0, v_b\}: v_0\in P_k(T), v_b\in P_{k-1}(e) ,\ \mbox{edge (or face)} \ e\subset \partial T\}\, ,
\eqn
where the space $P_k(T)$ consists of all polynomials on the element $T$ with
degree not greater than $k$, and $P_{k-1}(e)$ consists of all polynomials on the edge (or face) $e\subset\partial T$ with
degree not greater than $k-1$. We take $k=1$ in this work, since the true solutions to \eqref{weakfor} have only limited regularity due to the complex physical setup of the CEM. Patching $S(1,T)$ together with a common value on $v_b$, we obtain the WG finite element space $S_h$ associated with $\mathcal{T}_h$ on the domain $\Omega$:
\beqn\nb
S_h=\{v_h =\{v_0,v_b\}: \{v_0,v_b\}|_T\in S(1,T),\ \forall T\in\mathcal{T}_h\}\, .
\eqn
 Then the test function space $\mathbb{H}$ for the variational equation \eqref{weakfor} can be approximated by $\mathbb{H}_h: = S_h\otimes \mathbb{R}_\diamond^L$.
Next, we present another key concept of WG method, the weak gradient operator $\nabla_w$.  For each $v_h\in S_h$, $\nabla_w v_h\in [P_{0}(T)]^d$ is defined as the unique polynomial satisfying the following equation:
\beqn\nb
(\nabla_w v_h, \mathbf{q})_T=\la v_b, \mathbf{q}\cdot \mathbf{n}\ra_{\partial T}\quad \forall \mathbf{q}\in  [P_{0}(T)]^d,
\eqn
where $(\cdot ,\cdot )_T$ denotes the  inner product on $[L^2(T)]^d$, and $\la\cdot ,\cdot \ra_{\partial T}$ denotes the inner product on $L^2(\partial T)$.  Using integration by parts, we have the following equivalent definition of the weak gradient operator $\nabla_w$: 
\begin{equation}\label{equivdef}
\begin{aligned}
(\nabla_w v_h, \mathbf{q})_T= (\nabla v_0, \mathbf{q})_T+\la v_b-v_0, \mathbf{q}\cdot \mathbf{n}\ra_{\partial T} \quad \forall \mathbf{q}\in  [P_{0}(T)]^d.
\end{aligned}
\end{equation}
To approximate the conductivity $\sigma$, we introduce the following standard piecewise constant finite element space over the triangulation $\mathcal{T}_h$:
\beqn\nb
W_h=\{\sigma_h: \sigma_h|_T\in P_0(T),\ \forall \, T\in\mathcal{T}_h\}\, ,
\eqn
and further define the discrete admissible set:
\beqn\nb
\mathcal{A}_{h}=\{\sigma_h\in W_h: \ \lambda\leq\sigma_h\leq \lambda^{-1} \text{ a.e. in } \Omega\},
\eqn
where $\lambda\in (0,1)$ represents the same constant in the definition \eqref{def:A} of the admissible set $\mathcal{A}$.
With these notations, we propose the WG finite element approximation of the variational equation \eqref{weakfor}.  For a given conductivity $\sigma_h \in \mathcal{A}_h$, we introduce the following two bilinear forms on $ \mathbb{H}_h\times \mathbb{H}_h$:
\beqn
\nonumber a_h(\sigma_h, (u_h, U_h), (v_h, V_h))&=&\sum_{T\in \mathcal{T}_h}(\sigma_h \nabla_w u_h, \nabla_w v_h)_T+\sum_{l=1}^Lz_l^{-1}\la u_b-U_{h,l}, v_b-V_{h,l}\ra_{e_l}\, ,\\
\nonumber s(\sigma_h, (u_h, U_h), (v_h, V_h))&=&\sum_{T\in\mathcal{T}_h}h_T^{-1}\la Q_b u_0-u_b, Q_b v_0-v_b\ra_{\partial T}\, ,
 \eqn
where $s$ is the stabilizer for the well-posedness of the WG algorithm, and $Q_b$ denotes the $L^2$ projection from $L^2(e)$ to $P_{0}(e)$ for the edge (or face) $e\subset\partial T$. Then the WG algorithm corresponding to the variational equation \eqref{weakfor} reads: 
\begin{algo}\label{algo1}
 Find $(u_h, U_h) \in \mathbb{H}_h$  satisfying
\beqn\label{WGfor}
a_s(\sigma_h, (u_h, U_h), (v_h, V_h))=\la I\, , V_h\ra\quad  \forall  (v_h, V_h)\in \mathbb{H}_h\, ,
\eqn
where  the operator $a_s$ is defined as 
\beqn\nonumber
a_s(\sigma_h, (u_h, U_h), (v_h, V_h))=a_h(\sigma_h, (u_h, U_h), (v_h, V_h))+s(\sigma_h, (u_h, U_h), (v_h, V_h)).
\eqn
\end{algo}
%
The following theorem states the unique existence of the solution to the WG formulation \eqref{WGfor}.
\begin{thm}\label{uniqsolver}
Given $\sigma_h\in\mathcal{A}_h$, the WG formulation \eqref{WGfor} has a unique solution in the finite element space $\mathbb{H}_h$.
\end{thm}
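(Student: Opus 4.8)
The plan is to invoke the Lax--Milgram theorem on the finite-dimensional space $\mathbb{H}_h$ (which, being finite-dimensional, is automatically a Hilbert space with the norm inherited from $\mathbb{H}$). Since $\mathbb{H}_h\subset\mathbb{H}$ only in the sense that the second component is the same but the first component consists of pairs $\{v_0,v_b\}$, I would first equip $\mathbb{H}_h$ with a natural norm, e.g. $\|(v_h,V_h)\|_{\mathbb{H}_h}^2 = \|\nabla_w v_h\|_{L^2(\Omega)}^2 + \sum_{T}h_T^{-1}\|Q_bv_0-v_b\|_{\partial T}^2 + \sum_{l=1}^L\|v_b-V_{h,l}\|_{L^2(e_l)}^2$, and check this is indeed a norm on $\mathbb{H}_h$ (the only nontrivial point is definiteness: if all three terms vanish, then $\nabla_w v_0$ and the stabilizer force $v_0$ to be a global constant with $v_b$ equal to that constant, and the electrode terms then force $V_{h,l}$ to equal that constant for every $l$; combined with $V_h\in\mathbb{R}_\diamond^L$ this forces the constant to be $0$). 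The bilinear form under consideration is $a_s(\sigma_h,\cdot,\cdot)$ with $\sigma_h\in\mathcal{A}_h$ fixed, and the right-hand side is the bounded linear functional $(v_h,V_h)\mapsto\langle I,V_h\rangle$.

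Next I would verify the three hypotheses of Lax--Milgram. Boundedness of the functional $\langle I,\cdot\rangle$ is immediate from Cauchy--Schwarz in $\mathbb{R}^L$ together with the norm equivalence just described. Boundedness (continuity) of $a_s(\sigma_h,\cdot,\cdot)$ follows from $\lambda\le\sigma_h\le\lambda^{-1}$, Cauchy--Schwarz applied termwise to the weak-gradient term, the electrode inner-product terms, and the stabilizer term, all controlled by the constant $\lambda^{-1}$, the contact impedances $z_l^{-1}$, and the mesh weights $h_T^{-1}$. Coercivity is the key point: for $(v_h,V_h)\in\mathbb{H}_h$,
\begin{equation}\nonumber
a_s(\sigma_h,(v_h,V_h),(v_h,V_h)) = (\sigma_h\nabla_w v_h,\nabla_w v_h) + \sum_{l=1}^L z_l^{-1}\|v_b-V_{h,l}\|_{e_l}^2 + \sum_{T\in\mathcal{T}_h}h_T^{-1}\|Q_bv_0-v_b\|_{\partial T}^2 ,
\end{equation}
and using $\sigma_h\ge\lambda$ and $z_l^{-1}\ge(\max_l z_l)^{-1}$ this is bounded below by a positive multiple of $\|(v_h,V_h)\|_{\mathbb{H}_h}^2$. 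Hence Lax--Milgram gives a unique $(u_h,U_h)\in\mathbb{H}_h$ solving \eqref{WGfor}.

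An equivalent and perhaps cleaner route, since $\mathbb{H}_h$ is finite-dimensional, is to note that \eqref{WGfor} is a square linear system: it suffices to show the associated homogeneous system has only the trivial solution. Taking $I=0$ and testing with $(v_h,V_h)=(u_h,U_h)$ yields $a_s(\sigma_h,(u_h,U_h),(u_h,U_h))=0$; by the coercivity estimate above every term vanishes, so $\nabla_w u_h=0$, $Q_bu_0=u_b$ on every $\partial T$, and $u_b=U_{h,l}$ on each $e_l$. The first two conditions force $u_0$ to be a constant and $u_b$ to equal that same constant globally (this is where the specific structure of the lowest-order space $S(1,T)$ and the definition \eqref{defweak} of $\nabla_w$ enter), and then $U_h$ equals that constant on every electrode; since $U_h\in\mathbb{R}_\diamond^L$, the constant is $0$, so $(u_h,U_h)=0$.

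The main obstacle is establishing coercivity, which really amounts to proving that the proposed expression defines a genuine norm on $\mathbb{H}_h$ — i.e., that the null space of the semi-norm induced by $a_s(\sigma_h,\cdot,\cdot)$ is trivial in $\mathbb{H}_h$. This hinges on the kernel characterization of the weak gradient on the lowest-order space together with the stabilizer: one must show that $\nabla_w v_h=0$ and $Q_bv_0=v_b$ together imply $v_h$ is globally constant (not just elementwise constant with possibly jumping boundary values). I would carry this out by taking $q=\nabla v_0$ in the equivalent definition of $\nabla_w$ to get $\nabla v_0=0$ on each $T$, hence $v_0$ piecewise constant, then $Q_bv_0=v_b$ makes $v_b$ equal to that constant on each edge, and continuity of $v_b$ across shared edges (built into $S_h$) propagates a single constant over the connected domain $\Omega$. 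Everything else is a routine application of Cauchy--Schwarz and the uniform bounds on $\sigma_h$, $z_l$, and $h_T$.
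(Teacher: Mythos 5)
Your proposal is correct and follows essentially the same route as the paper: the paper likewise treats \eqref{WGfor} as a square linear system and proves uniqueness by testing the homogeneous equation with the difference of two solutions, concluding from $\nabla_w\hat u_h=0$, the stabilizer term and the electrode terms that the difference is a global constant, which must vanish since $\hat U_h\in\mathbb{R}^L_\diamond$. Your Lax--Milgram packaging and your more explicit kernel characterization (note that annihilating the boundary term when taking $q=\nabla v_0$ uses $Q_b v_0=v_b$ together with the edgewise constancy of $\nabla v_0\cdot\mathbf{n}$) are simply a more detailed rendering of the same argument.
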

\begin{proof}
Since the equation system of \eqref{WGfor} has the same number of unknowns and equations, it suffices to prove the uniqueness of the solution to the system. We prove the uniqueness by contradiction. Assume that  $(u_h^1, U_h^1)$ and $(u_h^2, U_h^2)$ are two different solutions to \eqref{WGfor}. Denoting by $(\hat{u}_h, \hat{U}_h)$ the difference $(u_h^1, U_h^1)-(u_h^2, U_h^2)$,  we observe that  $(\hat{u}_h, \hat{U}_h)$ satisfies
\beqn\label{uniqsub}
\sum_{T\in\mathcal{T}}(\sigma_h \nabla_w \hat{u}_h, \nabla_w v_h)_T+\sum_{l=1}^Lz_l^{-1}\la\hat{u}_h-\hat{U}_l, v_h-V_l\ra_{e_l}+s(\sigma_h, (\hat{u}_h, \hat{U}_h), (v_h, V_h))=0\quad \forall (v_h, V_h)\in\mathbb{H}_h\, ,
\eqn
where $ \hat{U}_h=(\hat{U}_1,\ldots, \hat{U}_L)^t$. With the notation $\hat{u}_h = \{\hat{u}_0,\hat{u}_b\}$, taking $(v_h, V_h)=(\hat{u}_h, \hat{U}_h)$ in \eqref{uniqsub} yields \beqn\nb
  \nabla_w \hat{u}_h=0  \mbox{ on } T, \ \hat{u}_h=\hat{U}_h \mbox{ on } e_l,  \mbox{ and }   Q_b \hat{u}_0-\hat{u}_b =0 \mbox{ on } \partial T.
\eqn
Together with the definition \eqref{equivdef} of the weak gradient, we deduce that $\hat{u}_0 = \hat{u}_b$ on $ \partial T$ and $\hat{u}_0$ is a constant function on all elements $T$, hence all the entries of $\hat{U}_h\in \mathbb{R}_\diamond^L$ are equal to the constant. By the definition of $\mathbb{R}_\diamond^L$,  $\hat{U}_h=0$ and thus $(\hat{u}_h, \hat{U}_h)=0$, which contradicts the assumption that  $(u_h^1, U_h^1)$ and $(u_h^2, U_h^2)$ are two different solutions.  In this way we have proved the uniqueness of the solution.
\end{proof}
With Theorem \ref{uniqsolver} at hand, we can define the discrete forward operator $\mathcal{F}_h(\sigma_h) = (u_h(\sigma_h), U_h(\sigma_h))\in\mathbb{H}_h$ that maps $\sigma_h\in\mathcal{A}_h$ to the unique solution $(u_h,U_h)$ to \eqref{WGfor} as a discrete analogue of the forward operator $\mathcal{F}$.

\subsection{$L^2$ projections and approximation properties}
We define the local projection operators on the WG finite element spaces and derive some approximation properties which are useful in the convergence analysis.
For each element $T\in \mathcal{T}_h$, we denote the $L^2$ projection from $L^2(T)$ to $P_1(T)$ by $Q_0$. Recall that the $L^2$ projection from $L^2(e)$ to $P_{0}(e)$ is denoted by $Q_b$  for edge (or face) $e$. At the same time, the $L^2$ projection from $[L^2(T)]^d$ to the local discrete gradient space $[P_{0}(T)]^d$ will be denoted by $\mathbb{Q}_h$. 
We further introduce a projection operator $Q_h: H^1(\Omega)\to S_h$ so that    on each element $T\in \mathcal{T}_h$, 
\beqn\nb
Q_h v=\{Q_0 v_0, Q_b v_b \}\, ,\ \{v_0, v_b\}=i_w(v)\in S_w(T)\, ,
\eqn
 where the inclusion map $i_w: H^1(T)\to S_w(T)$ is defined as
\beqn\nb
i_w(\phi)=\{\phi|_T, \phi|_{\partial T}\}\, ,\ \phi\in H^1(T)\, .
\eqn
The following lemma in \cite[Lemma 5.1]{mu2015weak} states that the projection operators commute with the differential operators. 
\begin{lem} \label{conm}
On each element $T \in \mathcal{T}_h$ we have the following commutative property
\begin{equation}\label{comm}
\nabla_w(Q_h \phi)=\mathbb{Q}_h(\nabla \phi) \quad \forall \phi\in H^1(T)\, .
\end{equation}
\end{lem}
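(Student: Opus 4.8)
The plan is to verify the identity $\nabla_w(Q_h\phi) = \mathbb{Q}_h(\nabla\phi)$ directly from the definition of the weak gradient, testing against an arbitrary constant vector $q \in [P_0(T)]^2$. By the definition \eqref{defweak}, $\nabla_w(Q_h\phi)$ is the unique element of $[P_0(T)]^2$ satisfying $(\nabla_w(Q_h\phi), q)_T = \langle (Q_h\phi)_b, q\cdot\mathbf{n}\rangle_{\partial T}$ for all $q \in [P_0(T)]^2$. Since $(Q_h\phi)_b = Q_b(\phi|_{\partial T})$ and $q\cdot\mathbf{n}$ is piecewise constant on each edge $e \subset \partial T$, the defining property of the $L^2$ projection $Q_b$ onto $P_0(e)$ gives $\langle Q_b(\phi|_{\partial T}), q\cdot\mathbf{n}\rangle_{\partial T} = \langle \phi, q\cdot\mathbf{n}\rangle_{\partial T}$. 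So it remains to show $\langle \phi, q\cdot\mathbf{n}\rangle_{\partial T} = (\mathbb{Q}_h(\nabla\phi), q)_T$.

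For the right-hand side, I would use that $\mathbb{Q}_h$ is the $L^2$ projection from $[L^2(T)]^2$ onto $[P_0(T)]^2$, so $(\mathbb{Q}_h(\nabla\phi), q)_T = (\nabla\phi, q)_T$ for every constant vector $q$. Then an application of the divergence theorem (integration by parts) on the element $T$, using that $q$ is constant so $\nabla\cdot q = 0$, yields $(\nabla\phi, q)_T = \int_T \nabla\phi\cdot q\, dx = \int_{\partial T} \phi\, q\cdot\mathbf{n}\, ds = \langle \phi, q\cdot\mathbf{n}\rangle_{\partial T}$. Combining the two chains of equalities, $(\nabla_w(Q_h\phi), q)_T = \langle \phi, q\cdot\mathbf{n}\rangle_{\partial T} = (\mathbb{Q}_h(\nabla\phi), q)_T$ holds for all $q \in [P_0(T)]^2$, and since both $\nabla_w(Q_h\phi)$ and $\mathbb{Q}_h(\nabla\phi)$ lie in the finite-dimensional space $[P_0(T)]^2$, testing against a basis of that space forces them to be equal.

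The argument is essentially a bookkeeping exercise chaining together three facts: the defining weak-gradient relation, the two $L^2$-projection orthogonality identities (for $Q_b$ on edges and $\mathbb{Q}_h$ on the element), and the divergence theorem. There is no real analytic obstacle here; the only point requiring a little care is making sure the test space in \eqref{defweak} (constant vectors) matches the range of $\mathbb{Q}_h$ exactly, so that the projection identities can be invoked cleanly — in particular that $q\cdot\mathbf{n}$ being constant on each edge is what licenses replacing $Q_b(\phi|_{\partial T})$ by $\phi$ under the boundary pairing. Since this lemma is quoted from \cite{mu2015weak}, I would present only this short self-contained verification rather than elaborating further.
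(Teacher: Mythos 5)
Your verification is correct: chaining the defining relation \eqref{defweak}, the orthogonality of $Q_b$ on each edge (valid because $q\cdot\mathbf{n}$ is constant on each edge of the simplicial element $T$), the orthogonality of $\mathbb{Q}_h$ on $T$, and the divergence theorem with $\nabla\cdot q=0$ shows $(\nabla_w(Q_h\phi)-\mathbb{Q}_h(\nabla\phi),q)_T=0$ for all $q\in[P_0(T)]^2$, which forces equality in that finite-dimensional space. The paper itself gives no proof and simply quotes the result from \cite{mu2015weak}; your argument is exactly the standard one from that reference, specialized to the lowest-order case $k=1$ where the test functions are constants, so nothing further is needed.
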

Moreover, we have the following approximation properties for $Q_0$ and $\mathbb{Q}_h$ \cite[Lemma 5.2]{mu2015weak}.
\begin{lem}
\label{Q0} 
Assume that  $\phi \in H^1(\Omega)$ and $\phi|_T\in H^2(T)$  for all $T\in \mathcal{T}_h$. 
Then for $0 \leq s \leq 1$, we have 
\begin{align}
\sum_{T\in \mathcal{T}_h}\Vert \phi-Q_0\phi\Vert_{L^2(T)}^2+\sum_{T\in \mathcal{T}_h} h_T^2\Vert \nabla(\phi-Q_0\phi)\Vert_{[L^2(T)]^d}^2
&\leq Ch^{2s+2}\sum_{T\in\mathcal{T}_h}\Vert \phi\Vert_{H^{s+1}(T)}^2, \label{1lemma4.1} \\
\sum_{T\in \mathcal{T}_h} \Vert \nabla \phi-\mathbb{Q}_h(\nabla \phi)\Vert_{[L^2(T)]^d}^2
&\leq Ch^{2s}\sum_{T\in\mathcal{T}_h}\Vert \phi\Vert_{H^{s+1}(T)}^2. \label{2lemma4.1}
\end{align}
\end{lem}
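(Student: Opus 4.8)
The plan is to reduce both inequalities to element-local polynomial approximation estimates and then sum over $T\in\mathcal{T}_h$. The two ingredients are the $L^2$-stability of the projections $Q_0$ and $\mathbb{Q}_h$ (being orthogonal projections, they fix polynomials of the appropriate degree and are non-expansive in $L^2(T)$), and the Bramble--Hilbert lemma, transferred from a reference simplex $\hat T$ to a general element $T$ by an affine map whose scaling factors are controlled by the shape regularity of $\mathcal{T}_h$. I would first establish the endpoint cases $s=0$ and $s=1$ and then obtain the full range $0\le s\le 1$ by interpolation.

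For \eqref{1lemma4.1}, for any $p\in P_1(T)$ one has $\phi-Q_0\phi=(\phi-p)-Q_0(\phi-p)$, so $\|\phi-Q_0\phi\|_{L^2(T)}\le 2\|\phi-p\|_{L^2(T)}$; for the gradient term I would additionally invoke the inverse inequality $\|\nabla Q_0(\phi-p)\|_{L^2(T)}\le C h_T^{-1}\|Q_0(\phi-p)\|_{L^2(T)}\le C h_T^{-1}\|\phi-p\|_{L^2(T)}$, valid on shape-regular simplices, to get
\[
\|\nabla(\phi-Q_0\phi)\|_{L^2(T)}\le \|\nabla(\phi-p)\|_{L^2(T)}+C h_T^{-1}\|\phi-p\|_{L^2(T)}.
\]
Taking $p$ to be an averaged Taylor polynomial of $\phi$ of degree $1$ on $T$, the Bramble--Hilbert lemma with scaling gives, for $s\in\{0,1\}$, $\|\phi-p\|_{L^2(T)}\le C h_T^{s+1}|\phi|_{H^{s+1}(T)}$ and $\|\nabla(\phi-p)\|_{L^2(T)}\le C h_T^{s}|\phi|_{H^{s+1}(T)}$; combining these yields \eqref{1lemma4.1} element by element at $s=0,1$, and since $h_T\le h$ one squares and sums over $T$. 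For \eqref{2lemma4.1} I would argue directly: $\mathbb{Q}_h$ projects onto the constant vector fields, so $\|\nabla\phi-\mathbb{Q}_h(\nabla\phi)\|_{L^2(T)}\le 2\|\nabla\phi-c\|_{L^2(T)}$ for every constant vector $c$, and choosing $c$ equal to the mean of $\nabla\phi$ over $T$ and applying Bramble--Hilbert to $\nabla\phi$ gives $\|\nabla\phi-\mathbb{Q}_h(\nabla\phi)\|_{L^2(T)}\le C h_T^{s}|\nabla\phi|_{H^{s}(T)}\le C h_T^{s}|\phi|_{H^{s+1}(T)}$ for $s\in\{0,1\}$, which after squaring and summing is \eqref{2lemma4.1} at the endpoints.

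Finally I would pass to general $s\in[0,1]$ by real interpolation: the maps $\phi\mapsto\phi-Q_0\phi$ and $\phi\mapsto\nabla\phi-\mathbb{Q}_h(\nabla\phi)$ are bounded from $\prod_{T\in\mathcal{T}_h}H^{1}(T)$ and from $\prod_{T\in\mathcal{T}_h}H^{2}(T)$ into $L^2(\Omega)$, with operator norms bounded by $Ch$ and $Ch^{2}$ (respectively $C$ and $Ch$) by the endpoint estimates just established; interpolating between these endpoints, using $[\prod_T H^1(T),\prod_T H^2(T)]_{s}=\prod_T H^{s+1}(T)$, delivers the intermediate bounds with the stated fractional powers of $h$. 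The one point requiring care is that every constant produced by the reference-element estimates, the scaling, and the inverse inequality depends only on the shape-regularity constant of $\mathcal{T}_h$ and never on the particular element or on $h$, so the local inequalities may be summed with a single $C$; this is precisely where shape regularity is used. Alternatively one may bypass interpolation and invoke a fractional Bramble--Hilbert lemma on $\hat T$, which produces the estimates directly for all real $s\in[0,1]$.
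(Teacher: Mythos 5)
The paper does not actually prove this lemma: it is quoted verbatim from the reference \cite{mu2015weak} (``we have the following approximation estimates \dots from \cite{mu2015weak}''), so there is no in-paper argument to compare against line by line. Your proposal supplies the standard self-contained proof, and it is essentially sound: writing $\phi-Q_0\phi=(\phi-p)-Q_0(\phi-p)$ with $p$ a (averaged Taylor) polynomial, using $L^2$-stability of the orthogonal projection, the inverse inequality $\Vert \nabla q\Vert_{L^2(T)}\leq C h_T^{-1}\Vert q\Vert_{L^2(T)}$ for $q\in P_1(T)$, and Bramble--Hilbert with affine scaling gives exactly the element-wise bounds $\Vert \phi-Q_0\phi\Vert_{L^2(T)}+h_T\Vert\nabla(\phi-Q_0\phi)\Vert_{L^2(T)}\leq Ch_T^{s+1}\Vert\phi\Vert_{H^{s+1}(T)}$ and $\Vert\nabla\phi-\mathbb{Q}_h(\nabla\phi)\Vert_{L^2(T)}\leq Ch_T^{s}\Vert\phi\Vert_{H^{s+1}(T)}$ at $s=0,1$, which sum correctly since $h_T\leq h$; this is the same mechanism used in \cite{mu2015weak} and in standard WG error analysis. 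The only step that deserves more care than you give it is the passage to fractional $s\in(0,1)$ by operator interpolation: the identification $[H^1(T),H^2(T)]_{s}=H^{1+s}(T)$ carries equivalence constants that a priori depend on the element $T$, so one must either perform the interpolation on the reference simplex and scale (keeping track of the inhomogeneous scaling of the lower-order terms in the full norm), or, as you note in your last sentence, invoke a fractional Bramble--Hilbert/Deny--Lions estimate on $\hat T$ directly; the latter is the cleaner route and closes the argument completely. With that caveat addressed, your proof is correct and is, in effect, the proof the paper delegates to its reference.
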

To study the approximation estimates of these projection operators on the edge (or face) of elements, we need the trace inequality in \cite{wang2014weak}:
let $T$ be an element with $e\subset\partial T$ an edge (or face), then for any function $\psi\in H^1(T)$, 
\begin{eqnarray}
\Vert \psi\Vert_{L^2(e)}^2\leq C(h_T^{-1}\Vert \psi\Vert_{L^2(T)}^2+h_T\Vert \nabla \psi\Vert_{[L^2(T)]^d}^2)\, .\label{trace}
\end{eqnarray}
With the trace inequality \eqref{trace} and the estimates in Lemma~\ref{Q0}, it is straightforward to 
derive the following estimates of the projection operators $Q_0$ and $\mathbb{Q}_h$ on $\partial T$.
\begin{lem}
\label{Q1}
Under the assumptions in Lemma \ref{Q0},
for $0 \leq s \leq 1$, we have 
\begin{align}
\sum_{T\in \mathcal{T}_h} \Vert \phi-Q_0\phi\Vert_{L^2(\partial T)}^2
&\leq Ch^{2s+1}\sum_{T\in\mathcal{T}_h}\Vert \phi\Vert_{H^{s+1}(T)}^2,\label{3lemma4.1} \\
\sum_{T\in \mathcal{T}_h} \Vert (\nabla \phi-\mathbb{Q}_h(\nabla \phi)) \cdot \mathbf{n} \Vert_{L^2(\partial T)}^2
&\leq Ch\sum_{T\in\mathcal{T}_h}\Vert \phi\Vert_{H^{2}(T)}^2. \label{4lemma4.1}
\end{align}
\end{lem}
We also present the following useful estimate for the projection operator $Q_b$.
\begin{lem} \label{Qb}
Under the same assumptions as in Lemma \ref{Q0}, we have
\begin{equation}
\label{lemma4.4}
(\sum_{T\in\mathcal{T}_h} \Vert \phi-Q_b \phi\Vert^2_{L^2(\partial T)})^{1/2}\leq Ch^{1/2}\Vert \phi\Vert_{H^1(\Omega)}\, .
\end{equation}
\end{lem}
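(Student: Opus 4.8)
The plan is to bound the $L^2(\partial T)$ norm of $\phi - Q_b\phi$ element by element (or rather, edge by edge), insert an intermediate quantity coming from a well-behaved average of $\phi$, and then sum. The natural intermediate quantity on each edge $e\subset\partial T$ is the constant $\bar\phi_e := \frac{1}{|e|}\int_e \phi\,ds$, which is exactly the $L^2(e)$-projection of $\phi$ onto $P_0(e)$; since $Q_b$ is defined edgewise as this projection, we have $Q_b\phi|_e = \bar\phi_e$, so in fact there is nothing to insert — we only need to estimate $\|\phi - \bar\phi_e\|_{L^2(e)}$ directly.

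First I would fix $T\in\mathcal{T}_h$ and an edge $e\subset\partial T$, and write $\phi - Q_b\phi = \phi - \bar\phi_e$ on $e$. By a scaling (or Poincar\'e-type / Bramble--Hilbert) argument on the reference edge, or more directly by combining the trace inequality \eqref{trace} with a standard one-dimensional Poincar\'e inequality on $e$, one gets
\beqn\nb
\Vert \phi - Q_b\phi\Vert_{L^2(e)}^2 \leq C h_T \Vert \nabla\phi\Vert_{L^2(T)}^2 .
\eqn
Concretely: $Q_b$ is a projection, so $\|\phi - Q_b\phi\|_{L^2(e)} \leq \|\phi - c\|_{L^2(e)}$ for any constant $c$; taking $c = Q_0\phi$ restricted appropriately and applying \eqref{trace} to $\psi = \phi - Q_0\phi$ gives $\|\phi-Q_0\phi\|_{L^2(\partial T)}^2 \le C(h_T^{-1}\|\phi-Q_0\phi\|_{L^2(T)}^2 + h_T\|\nabla(\phi-Q_0\phi)\|_{L^2(T)}^2)$, and then \eqref{1lemma4.1} of Lemma~\ref{Q0} with $s=0$ controls both terms on the right by $C h_T\|\phi\|_{H^1(T)}^2$. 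Alternatively, invoking Lemma~\ref{Q1} inequality \eqref{3lemma4.1} with $s=0$ directly yields $\sum_{T}\|\phi-Q_0\phi\|_{L^2(\partial T)}^2 \le Ch\sum_T\|\phi\|_{H^1(T)}^2$, and since $Q_b$ is the best $L^2(e)$ constant approximation it does no worse than $Q_0\phi$ on each edge.

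Then I would sum over all $T\in\mathcal{T}_h$: using $h_T\le h$,
\beqn\nb
\sum_{T\in\mathcal{T}_h}\Vert \phi-Q_b\phi\Vert_{L^2(\partial T)}^2 \leq C h \sum_{T\in\mathcal{T}_h}\Vert\phi\Vert_{H^1(T)}^2 = C h\,\Vert\phi\Vert_{H^1(\Omega)}^2,
\eqn
and taking square roots gives the claimed bound $\big(\sum_T\|\phi-Q_b\phi\|_{L^2(\partial T)}^2\big)^{1/2}\le Ch^{1/2}\|\phi\|_{H^1(\Omega)}$. I would be mildly careful that each edge is counted at most twice in the sum over $\partial T$, which only affects the constant $C$, and that shape-regularity makes $h_T^{-1}\|\cdot\|_{L^2(T)}^2$ and $h_T\|\nabla\cdot\|_{L^2(T)}^2$ scale consistently.

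The main (very minor) obstacle is simply deciding at what level to invoke the machinery: the cleanest route is to observe $Q_b\phi|_e$ is the mean of $\phi$ over $e$, bound $\|\phi - Q_b\phi\|_{L^2(e)}\le \|\phi - Q_0\phi\|_{L^2(e)}$, and then quote Lemma~\ref{Q1} (or Lemma~\ref{Q0} plus \eqref{trace}) with $s=0$. No estimate beyond $H^1$ regularity of $\phi$ is needed here — this is genuinely the $s=0$ endpoint case, which is why only $\|\phi\|_{H^1(\Omega)}$, and not a broken $H^2$ seminorm, appears on the right-hand side.
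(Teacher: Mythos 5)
Your overall strategy coincides with the paper's: on each element compare $Q_b\phi$ with a constant built from $\phi$, then use the trace inequality \eqref{trace} together with a first-order (Poincar\'e-type) approximation estimate, and sum over $\mathcal{T}_h$. The paper implements this by introducing $Q_{0,0}$, the $L^2(T)$-projection onto $P_0(T)$, observing that $\Vert\phi-Q_b\phi\Vert_{L^2(\partial T)}\le\Vert\phi-Q_{0,0}\phi\Vert_{L^2(\partial T)}$ (legitimate because $Q_{0,0}\phi$ restricted to each edge lies in $P_0(e)$ and $Q_b$ is the best constant approximation there), and then applying \eqref{trace} with the $s=0$ approximation estimate.

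The concrete justification you give, however, contains a genuine error: you compare with $Q_0\phi$, the $L^2(T)$-projection onto $P_1(T)$. The best-approximation property of $Q_b$ only allows comparison against elements of $P_0(e)$, and $Q_0\phi|_e$ is in general a non-constant linear function, so the inequality $\Vert\phi-Q_b\phi\Vert_{L^2(e)}\le\Vert\phi-Q_0\phi\Vert_{L^2(e)}$ is simply false: take $\phi$ a non-constant affine function on $T$; then $Q_0\phi=\phi$, the right-hand side vanishes, but the left-hand side does not. The same flaw undermines your ``alternative'' route via \eqref{3lemma4.1} (``$Q_b$ does no worse than $Q_0\phi$ on each edge''). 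Your fallback of a one-dimensional Poincar\'e inequality on $e$ is also not available as stated, since it would require the tangential derivative of $\phi$ in $L^2(e)$, which $\phi\in H^1(\Omega)$ does not provide. The repair is exactly the paper's device: replace $Q_0$ by the piecewise constant projection $Q_{0,0}$ (the element mean), for which the edgewise comparison is valid; then \eqref{trace} applied to $\psi=\phi-Q_{0,0}\phi$, together with $\Vert\phi-Q_{0,0}\phi\Vert_{L^2(T)}\le Ch_T\Vert\nabla\phi\Vert_{L^2(T)}$ and $\nabla(\phi-Q_{0,0}\phi)=\nabla\phi$, yields precisely your key bound $\Vert\phi-Q_b\phi\Vert_{L^2(\partial T)}^2\le Ch_T\Vert\phi\Vert_{H^1(T)}^2$, after which your summation and square-root step finish the proof as written.
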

\begin{proof}
By definition, $Q_b$ is the $L^2$ projection from $L^2(e)$  to $P_0(e)$. Then we define $Q_{0, 0}$ as the $ L^2$ projection from $L^2(T)$ to $P_0(T)$, which leads to 
\beqn\nonumber
\Vert \phi-Q_b \phi\Vert_{L^2(\partial T)}\leq \Vert \phi-Q_{0,0} \phi\Vert_{L^2(\partial T)}\, .
\eqn
From the trace inequality \eqref{trace} and the approximation property of $Q_{0,0}$, we also have
\beqn\nb
 (\sum_{T\in\mathcal{T}_h}\Vert \phi-Q_{0, 0}\phi\Vert_{L^2(\partial T)}^2)^{1/2}\leq Ch^{1/2}\Vert \phi\Vert_{H^1(\Omega)}\, ,
 \eqn
and the desired inequality \eqref{lemma4.4} follows.
\end{proof}

\subsection{Error estimates for the WG algorithm}
Next, we analyze the error between 
the finite element solution $(u_h, U_h)$ to the WG formulation \eqref{WGfor} and 
the exact solution $(u, U)$ to the system of equations \eqref{contieqn}. 
To simplify the notations, we define the error quantities 
 $$(e_h, E_h): =(u_h-Q_h u, U_h-U)\in\mathbb{H}_h$$
 and denote $e_h = \{e_0, e_b\}$, $E_h = (E_1,\ldots, E_L)^t$.
 We first derive the corresponding error equation for the WG algorithm \ref{algo1}. 
\begin{lem}\label{erroreqlemma}
Assume that $u\in H^1(\Omega)$,  $u|_T \in H^{2}(T)$, and $\sigma\nabla u|_T\in [H^1(T)]^d$ for all $T \in \mathcal{T}_h$ when mesh size $h\leq h_0$. 
Let $(e_h, E_h) =(u_h-Q_h u, U_h-U)$. Then for any $ (v_h, V_h)\in \mathbb{H}_h$, 
\begin{equation}
\begin{aligned}
 a_s(\sigma_h, (e_h, E_h),(v_h, V_h))=&\sum_{T\in\mathcal{T}_h} \la (\sigma _h\mathbb{Q}_h (\nabla u)-\sigma\nabla u)\cdot\mathbf{n}, v_0-v_b\ra_{\partial T}+\sum_{T\in\mathcal{T}_h} ((\sigma-\sigma_h) \nabla  u, \nabla v_0)_T\\
&-s(\sigma_h, (Q_h u, U),( v_h, V_h)).\label{erroreqn}
\end{aligned}
\end{equation}
\end{lem}
\begin{proof}
By Lemma \ref{conm} and the definition of the weak gradient operator,  for any $\phi\in H^1(\Omega)$ and $v_h\in S_h $,
\begin{equation}
\begin{split}
(\sigma_h\nabla_w Q_h\phi, \nabla_w v_h)_T&=(\sigma_h \mathbb{Q}_h \nabla\phi, \nabla_w v_h)_T\\
&=((\sigma_h-\sigma)\nabla\phi, \nabla v_0)_T+(\sigma\nabla\phi, \nabla v_0)_T-\la \sigma_h \mathbb{Q}_h(\nabla\phi)\cdot\mathbf{n}, v_0-v_b\ra_{\partial T}\, .
\end{split}
\label{error1}
\end{equation}
By testing the first equation in \eqref{contieqn} with $v_0$ and summing up over $T\in\mathcal{T}_h$, we can deduce from the integration by parts  and boundary conditions in \eqref{contieqn} that
\begin{equation}
\begin{split}
0&=\sum_{T\in\mathcal{T}_h}  \int_T (-\nabla\cdot (\sigma\nabla u(x)) v_0(x) dx\\
&=-\sum_{T\in\mathcal{T}_h}\la\sigma\nabla u\cdot \mathbf{n},  v_0\ra_{\partial T}+\sum_{T\in\mathcal{T}_h} (\sigma \nabla u, \nabla v_0)_T\\
&=-\sum_{T\in\mathcal{T}_h}\la  \sigma\nabla u\cdot \mathbf{n}, v_0-v_b\ra_{\partial T}+\sum_{T\in\mathcal{T}_h} (\sigma \nabla u, \nabla v_0)_T
+\sum_{l=1}^L z_l^{-1}\la u-U_l,v_b\ra_{e_l}\, .
\end{split}
\label{error2}
\end{equation}
Taking $\phi = u$ in \eqref{error1},  one can derive from \eqref{error1} and \eqref{error2} that
\begin{eqnarray}
0&=&\sum_{T\in\mathcal{T}_h} (\sigma_h \nabla_w Q_h u, \nabla_w v_h)_T+\sum_{T\in\mathcal{T}_h} \la \sigma_h \mathbb{Q}_h (\nabla u)\cdot\mathbf{n}, v_0-v_b\ra_{\partial T}\nonumber\\
&&+\sum_{T\in\mathcal{T}_h}((\sigma-\sigma_h)\nabla u,\nabla v_0)_T -\sum_{T\in\mathcal{T}_h}\la \sigma\nabla u\cdot \mathbf{n}, v_0-v_b \ra_{\partial T}+\sum_{l=1}^L z_l^{-1}\la  u-U_l,v_b\ra_{e_l}\, ,\label{zero2}
\end{eqnarray}
which, together with the boundary conditions in \eqref{contieqn}, leads to \begin{equation}
\label{discrete_weakfor}
\begin{split}
\la I, V_h\ra&=\sum_{T\in\mathcal{T}_h} (\sigma_h \nabla_w Q_h u, \nabla_w v_h)_T+\sum_{T\in\mathcal{T}_h} \la \sigma_h \mathbb{Q}_h (\nabla u)\cdot\mathbf{n}, v_0-v_b\ra_{\partial T}\\
&\quad+\sum_{T\in\mathcal{T}_h}((\sigma-\sigma_h)\nabla u,\nabla v_0)_T-\sum_{T\in\mathcal{T}_h}\la \sigma\nabla u\cdot \mathbf{n}, v_0-v_b \ra_{\partial T} \\
&\quad +\sum_{l=1}^L z_l^{-1}\la  Q_b u-U_l, v_b-V_l\ra_{e_l}\\
&=a_h(\sigma_h, (Q_h u, U), (v_h, V_h))+\sum_{T\in\mathcal{T}_h} \la (\sigma_h\mathbb{Q}_h (\nabla u)-\sigma\nabla u)\cdot\mathbf{n},  v_0-v_b\ra_{\partial T}\\
&\quad+\sum_{T\in\mathcal{T}_h}((\sigma-\sigma_h)\nabla u,\nabla v_0)_T\,  .
\end{split}
\end{equation}
Combining \eqref{WGfor} and \eqref{discrete_weakfor}, we arrive at the following error equation of $(e_h, E_h)$,
\begin{equation*}
\begin{aligned}
 a_s(\sigma_h, (e_h, E_h),(v_h, V_h))=&\sum_{T\in\mathcal{T}_h} \la (\sigma _h\mathbb{Q}_h (\nabla u)-\sigma\nabla u)\cdot\mathbf{n}, v_0-v_b\ra_{\partial T}+\sum_{T\in\mathcal{T}_h} ((\sigma-\sigma_h) \nabla  u, \nabla v_0)_T\\
&-s(\sigma_h, (Q_h u, U),( v_h, V_h))\, 
\end{aligned}
\end{equation*}
for all $  (v_h, V_h)\in \mathbb{H}_h$. This completes the derivation of \eqref{erroreqn}.
\end{proof}
%

Next, to prepare for the error estimate, we further introduce the following norms on the space $\mathbb{H}_h$ in analogue to $\Vert \cdot\Vert_h$ on the space $\mathbb{H}$.  For $(v_h, V_h)\in \mathbb{H}_h$, we define 
\beqn \nb
\Vert (v_h, V_h)\Vert^2_{1,h}&=&\sum_{T\in\mathcal{T}_h}(\Vert \nabla v_0\Vert_{[L^2(T)]^d}^2+ h_T^{-1}\Vert Q_b v_0-v_b\Vert^2_{L^2(\partial T)})+\sum_{l=1}^L \Vert v_b-V_{h,l}\Vert_{L^2(e_l)}^2\, ,\\
||| (v_h, V_h)|||_h^2&=&a_s(1, (v_h, V_h), (v_h, V_h))\, . \nonumber
\eqn
The equivalence of these two norms is given below.
\begin{lem} \label{lemma6}
There exist positive constants $C_1, C_2$ such that for any $(v_h, V_h)\in \mathbb{H}_h$, 
\beqn\nb
C_1\Vert (v_h, V_h)\Vert^2_{1,h}\leq |||( v_h, V_h)|||_h^2\leq C_2\Vert (v_h, V_h)\Vert^2_{1,h}\, .
\eqn
\end{lem}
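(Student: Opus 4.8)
The plan is to compare the two norms term by term. Writing out the definitions,
\[
|||(v_h,V_h)|||_h^2=\sum_{T\in\mathcal{T}_h}\Vert\nabla_w v_h\Vert_{L^2(T)}^2+\sum_{l=1}^L z_l^{-1}\Vert v_b-V_{h,l}\Vert_{L^2(e_l)}^2+\sum_{T\in\mathcal{T}_h}h_T^{-1}\Vert Q_b v_0-v_b\Vert_{L^2(\partial T)}^2 ,
\]
so the stabilizer contribution already coincides with the corresponding term in $\Vert(v_h,V_h)\Vert_{1,h}^2$, and since the contact impedances $z_l$ are bounded above and below by fixed positive constants, $\sum_{l=1}^L z_l^{-1}\Vert v_b-V_{h,l}\Vert_{L^2(e_l)}^2$ and $\sum_{l=1}^L\Vert v_b-V_{h,l}\Vert_{L^2(e_l)}^2$ are equivalent. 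Hence it suffices to establish, on each $T\in\mathcal{T}_h$, the two-sided bound
\[
c_1\Big(\Vert\nabla v_0\Vert_{L^2(T)}^2+h_T^{-1}\Vert Q_b v_0-v_b\Vert_{L^2(\partial T)}^2\Big)\le\Vert\nabla_w v_h\Vert_{L^2(T)}^2+h_T^{-1}\Vert Q_b v_0-v_b\Vert_{L^2(\partial T)}^2\le c_2\Big(\Vert\nabla v_0\Vert_{L^2(T)}^2+h_T^{-1}\Vert Q_b v_0-v_b\Vert_{L^2(\partial T)}^2\Big),
\]
with $c_1,c_2$ depending only on shape regularity.

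For both directions I would use the integration-by-parts form of the weak gradient, namely $(\nabla_w v_h,q)_T=(\nabla v_0,q)_T+\la v_b-v_0,q\cdot\mathbf{n}\ra_{\partial T}$ for all $q\in[P_0(T)]^2$; this may legitimately be tested with $q=\nabla_w v_h$ and, because $v_0\in P_1(T)$, also with $q=\nabla v_0$. The recurring simplification is that on each edge $e\subset\partial T$ the quantity $q\cdot\mathbf{n}$ is constant, while $v_b\in P_0(e)$ has the same average over $e$ as $v_0$ by the definition of $Q_b$, so $\la v_b-v_0,q\cdot\mathbf{n}\ra_e=\la v_b-Q_b v_0,q\cdot\mathbf{n}\ra_e$. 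Combining this with Cauchy--Schwarz and the elementary trace bound for constant vector fields $\Vert q\cdot\mathbf{n}\Vert_{L^2(\partial T)}\le Ch_T^{-1/2}\Vert q\Vert_{L^2(T)}$ (which follows from shape regularity, as $|\partial T|\le Ch_T$ and $|T|\ge c h_T^2$), the choice $q=\nabla_w v_h$ gives, after dividing by $\Vert\nabla_w v_h\Vert_{L^2(T)}$, the inequality $\Vert\nabla_w v_h\Vert_{L^2(T)}\le\Vert\nabla v_0\Vert_{L^2(T)}+Ch_T^{-1/2}\Vert Q_b v_0-v_b\Vert_{L^2(\partial T)}$, and squaring yields the right inequality; the choice $q=\nabla v_0$ gives, after dividing by $\Vert\nabla v_0\Vert_{L^2(T)}$, the inequality $\Vert\nabla v_0\Vert_{L^2(T)}\le\Vert\nabla_w v_h\Vert_{L^2(T)}+Ch_T^{-1/2}\Vert Q_b v_0-v_b\Vert_{L^2(\partial T)}$, and squaring yields the left inequality (the degenerate cases $\nabla_w v_h=0$ or $\nabla v_0=0$ are trivial).

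Finally I would sum the elementwise estimates over $T\in\mathcal{T}_h$, add back the already-matched stabilizer and electrode terms, and absorb the numerical factors coming from squaring into the constants $C_1,C_2$. The only slightly delicate point is the edge-average replacement of $v_0$ by $Q_b v_0$ inside the boundary pairing; the rest is Cauchy--Schwarz, the constant-vector trace estimate and bookkeeping, so I do not expect a genuine obstacle here. It is worth noting that the choice $k=1$, which makes $\nabla v_0$ a constant and therefore an admissible test function for $\nabla_w$, is precisely what keeps this argument elementary.
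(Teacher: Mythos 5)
Your proof is correct and takes essentially the same route as the paper: the paper likewise tests the defining identity of the weak gradient (stated there directly with $Q_b v_0$ in place of $v_0$, which is exactly your edge-average observation, since $q\cdot\mathbf{n}$ is constant on each edge) with $q=\nabla_w v_h$ and $q=\nabla v_0$, and uses the trace inequality to produce the $h_T^{-1/2}$ factor before summing over elements. Your explicit matching of the stabilizer term and the $z_l$-weighted electrode terms merely spells out the bookkeeping the paper leaves implicit, so there is no substantive difference.
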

\begin{proof}
For any $v_h=\{v_0, v_b\}\in S_h$, by the definition \eqref{equivdef} of the weak gradient operator, we have
\begin{equation}\label{lemma_6}
(\nabla_w v_h, \mathbf{q})_T=(\nabla v_0, \mathbf{q})_T+\la v_b-Q_b v_0, \mathbf{q}\cdot \mathbf{n}\ra_{\partial T}\quad\forall  \mathbf{q}\in [P_{0}(T)]^d\,.
\end{equation}
Taking $\mathbf{q}=\nabla_w v_h$ in \eqref{lemma_6} yields
\beqn\nb
(\nabla_w v_h, \nabla_w v_h)_T=(\nabla v_0, \nabla_w v_h)_T +\la v_b-Q_b v_0, \nabla_w v_h\cdot \mathbf{n}\ra_{\partial T}\, .
\eqn
This together with the  Cauchy-Schwarz inequality and trace inequality \eqref{trace} gives 
\begin{eqnarray}
\nonumber (\nabla_w v_h, \nabla_w v_h)_T& \leq & \Vert \nabla v_0\Vert_{[L^2(T)]^d}\Vert  \nabla_w v_h\Vert_{[L^2(T)]^d}+\Vert Q_b v_0-v_b\Vert_{L^2(\partial T)}\Vert \nabla_w v_h\cdot \mathbf{n}\Vert_{L^2(\partial T)}\\ 
\nonumber &\leq &\Vert \nabla v_0\Vert_{[L^2(T)]^d} \Vert \nabla_w v_h\Vert_{[L^2(T)]^d}+Ch_T^{-1/2}\Vert Q_b v_0-v_b\Vert_{L^2(\partial T)}\Vert \nabla_w v_h\Vert_{[L^2(T)]^d}\, .
\end{eqnarray}
Then we deduce that
\beqn\nb
\Vert \nabla_w v_h\Vert_{[L^2(T)]^d} \leq C(\Vert \nabla v_0\Vert_{[L^2(T)]^d}^2+h_T^{-1}\Vert Q_b v_0-v_b\Vert_{L^2(\partial T)}^2)^{1/2}\, ,
\eqn
which implies
\beqn\label{upperequiv}
 |||(v_h, V_h)|||_h^2\leq C_2 \Vert(v_h, V_h)\Vert_{1,h}^2\, .
 \eqn
Next, taking $q=\nabla v_0$ in \eqref{lemma_6}, we obtain
\beqn\nb
(\nabla _w v_h, \nabla v_0)_T=(\nabla v_0, \nabla v_0)_T+\la v_b-Q_b v_0, \nabla v_0\cdot\mathbf{n}\ra_{\partial T}\, .
\eqn
Further by the  Cauchy-Schwarz inequality and trace inequality \eqref{trace} again, we arrive at 
\beqn \nb
\Vert \nabla v_0\Vert_{[L^2(T)]^d}\leq C(\Vert \nabla_w v_h\Vert_{[L^2(T)]^d}^2+Ch_T^{-1}\Vert Q_b v_0-v_b\Vert_{L^2(\partial T)}^2)^{1/2}\, ,
\eqn
which leads to 
\beqn\nb
C_1\Vert (v_h, V_h)\Vert^2_{1,h}\leq |||( v_h, V_h)|||_h^2
\eqn
and completes the proof together with \eqref{upperequiv}.
\end{proof}

Now we present the following error estimate of the WG algorithm \ref{algo1} in the induced norm $|||\cdot |||_h$.
\begin{thm} \label{tribnorm}
Under the same regularity assumptions as in Lemma~\ref{erroreqlemma}, there holds
\begin{eqnarray*}
&&||| ( e_h, E_h) |||_h \\
&&\leq C  \Bigg(h\left(\sum_{T\in\mathcal{T}_h}\Vert u\Vert_{H^2(T)}^2\right)^{1/2}+h \left(\sum_{T\in\mathcal{T}_h}\Vert \sigma\nabla u\Vert_{[H^{1}(T)]^d}^2\right)^{1/2}+\left(\sum_{T \in \mathcal{T}_h}\Vert (\sigma-\sigma_h)\nabla u\Vert_{[L^2(T)]^d}^2\right)^{1/2}\Bigg).
\end{eqnarray*}

\end{thm}
\begin{proof}
Taking $(v_h, V_h)=(e_h, E_h)$ in the error equation \eqref{erroreqn} yields 
\begin{equation}
\begin{split}
a_s(\sigma_h, (e_h, E_h),(e_h, E_h))=&\sum_{T\in\mathcal{T}_h} \la (\sigma_h \mathbb{Q}_h(\nabla u)-\sigma \nabla u)\cdot\mathbf{n}, e_0-e_b\ra_{\partial T}\\ 
& \quad +\sum_{T\in\mathcal{T}_h} ((\sigma-\sigma_h) \nabla u, \nabla e_0)_T
 -s(\sigma_h, (Q_h u, U),( e_h, E_h)).
\end{split}
\label{thm5.1}
\end{equation}
We shall estimate each of these terms on the right-hand side of \eqref{thm5.1}. 
For the first term, it follows from the Cauchy–Schwarz inequality, the approximation property \eqref{4lemma4.1}, and the trace inequality \eqref{trace} that
\begin{align*}
&\left|\sum_{T\in\mathcal{T}_h} \la (\sigma_h\mathbb{Q}_h (\nabla u)-\sigma\nabla u)\cdot\mathbf{n}, e_0-e_b\ra_{\partial T}\right|\\
\leq \; &\sum_{T\in\mathcal{T}_h} \left|\la \sigma_h (\mathbb{Q}_h (\nabla u)-\nabla u)\cdot\mathbf{n}, e_0-e_b\ra_{\partial T}
+\la (-\sigma+\sigma_h) \nabla u\cdot\mathbf{n}, e_0-e_b\ra_{\partial T}\right|\\
\leq \; & \sum_{T\in\mathcal{T}_h} (\Vert \sigma_h (\mathbb{Q}_h (\nabla u)-\nabla u)\cdot\mathbf{n}\Vert _{L^2(\partial T)} +\Vert (\sigma-\sigma_h) \nabla u\cdot \mathbf{n}\Vert_{L^2(\partial T)})\Vert e_0-e_b\Vert_{L^2(\partial T)}\\
\leq \; &C \left(\left(\sum_{T\in\mathcal{T}_h} h_T\Vert \sigma_h (\mathbb{Q}_h (\nabla u)-\nabla u)\cdot\mathbf{n}\Vert ^2_{L^2(\partial T)}\right)^{1/2}
+\left(\sum_{T\in\mathcal{T}_h} h_T\Vert (\sigma-\sigma_h)\nabla u\cdot \mathbf{n}\Vert ^2_{L^2(\partial T)}\right)^{1/2}\right)
\\
&\cdot\left(\sum_{T\in\mathcal{T}_h} h_T^{-1}\Vert e_0-e_b\Vert^2_{L^2(\partial T)}\right)^{1/2}\\
\leq \; & C\left( h \left(\sum_{T\in\mathcal{T}_h}\Vert u\Vert_{H^{2}(T)}^2\right)^{1/2} +h \left(\sum_{T\in\mathcal{T}_h}\Vert \sigma\nabla u\Vert_{[H^1(T)]^d}^2\right)^{1/2}
+  \left(\sum_{T\in\mathcal{T}_h}\Vert  (\sigma-\sigma_h)\nabla u\Vert_{[L^2(T)]^d}^2\right)^{1/2}\right)\\
&\cdot
\left(\sum_{T\in\mathcal{T}_h} h_T^{-1}\Vert e_0-e_b\Vert^2_{L^2(\partial T)}\right)^{1/2}\, ,
\end{align*}
where we have used the fact that $\sigma_h$ is piecewise constant and the regularity assumptions on $u$ and $\sigma\nabla u$. %
 Following the approximation property of the $L^2$ projection operator $Q_b$ and the trace inequality \eqref{trace}, we also have
\begin{eqnarray*}
\Vert e_0-e_b\Vert_{L^2(\partial T)}&\leq \Vert e_0-Q_b e_0\Vert_{L^2(\partial T)}+\Vert Q_b e_0-e_b\Vert_{L^2(\partial T)}\\
& \leq  Ch_T^{1/2}\Vert \nabla e_0\Vert_{[L^2(T)]^d}+\Vert Q_b e_0-e_b\Vert_{L^2(\partial T)}\, .
\end{eqnarray*}
Together with Lemma \ref{lemma6} this leads to
\begin{equation*}
\sum_{T\in\mathcal{T}_h} h_T^{-1}\Vert e_0-e_b\Vert_{L^2(\partial T)}^2\leq C \left\| (e_h, E_h) \right\|_{1,h}^2 \leq C |||(e_h, E_h)|||^2_h\, .
\end{equation*}
Thus we obtain 
\begin{equation}
\begin{split}
&\left|\sum_{T\in\mathcal{T}_h} \la (\sigma_h\mathbb{Q}_h (\nabla u)-\sigma\nabla u)\cdot\mathbf{n}, e_0-e_b\ra_{\partial T}\right|\\
\leq \; & C\left( h \left(\sum_{T\in\mathcal{T}_h}\Vert u\Vert_{H^{2}(T)}^2\right)^{1/2} +h \left(\sum_{T\in\mathcal{T}_h}\Vert \sigma\nabla u\Vert_{[H^1(T)]^d}^2\right)^{1/2}
+\left(\sum_{T\in\mathcal{T}_h}\Vert  (\sigma-\sigma_h)\nabla u\Vert_{[L^2(T)]^d}^2\right)^{1/2}\right)\\
&\cdot|||(e_h, E_h)|||_h.
 \end{split}
 \label{thm5.1.1}
 \end{equation}
By Cauchy–Schwarz inequality and the norm equivalence in Lemma~\ref{lemma6}, the second term on the right-hand side of \eqref{thm5.1} satisfies
\begin{equation}
\begin{split}
\sum_{T\in\mathcal{T}_h} ((\sigma-\sigma_h) \nabla  u, \nabla e_0)_T&\leq
 \left(\sum_{T\in\mathcal{T}_h}\Vert (\sigma-\sigma_h) \nabla u\Vert^2_{[L^2(T)]^d}\right)^{1/2}\left(\sum_{T\in\mathcal{T}_h}  \Vert \nabla e_0\Vert^2_{[L^2(T)]^d} \right)^{1/2} \\
&\leq \left(\sum_{T\in\mathcal{T}_h}\Vert (\sigma-\sigma_h) \nabla u\Vert^2_{[L^2(T)]^d}\right)^{1/2}|||(e_h, E_h)|||_h\, .
 \end{split}
 \label{thm5.1.2}
 \end{equation}
 Finally, for the third term on the right-hand side of \eqref{thm5.1}, it follows from the approximation property \eqref{3lemma4.1} and Lemma~\ref{lemma6} that
\begin{equation}
\begin{split}
|s(\sigma_h, (Q_h u, U),( e_h, E_h))|
&=\left|\sum_{T\in\mathcal{T}_h} h_T^{-1}\la  Q_b(Q_0 u)-Q_b u, Q_b e_0-e_b\ra_{\partial T}\right|\\
&\leq \sum_{T\in\mathcal{T}_h} h_T^{-1} \Vert Q_b( Q_0 u-u) \Vert_{L^2(\partial T)} \Vert Q_b e_0-e_b\Vert_{L^2(\partial T)}\\
&\leq\left(\sum_{T\in\mathcal{T}_h} h_T^{-1} \Vert Q_0 u-u \Vert^2_{L^2(\partial T)}\right)^{1/2}
\left(\sum_{T\in\mathcal{T}_h} h_T^{-1} \Vert Q_b e_0-e_b\Vert^2_{L^2(\partial T)} \right)^{1/2}\\
&\leq C h \left(\sum_{T\in\mathcal{T}_h}\Vert u\Vert_{H^{2}(T)}^2\right)^{1/2} |||(e_h, E_h)|||_h.
 \end{split}
 \label{thm5.1.3}
 \end{equation}
 Substituting \eqref{thm5.1.1}--\eqref{thm5.1.3} in \eqref{thm5.1}, we obtain the desired result.
\end{proof}

While Theorem \ref{tribnorm} above provides an error estimate for the WG algorithm \ref{algo1} in the induced norm $|||\cdot|||_h$, it is crucial to analyze the convergence of the finite element approximation of the electrode voltage $U$ in the Euclidean norm, which is the measurement discrepancy term in the objective functional of the minimization problem \eqref{mini}.
To this end, we derive the error estimate for the electrode voltage $U$ in the Euclidean norm using a duality argument. 
Consider a dual problem that seeks $\psi\in H^1(\Omega)$ and $\Psi\in \mathbb{R}^L_\diamond$  satisfying
\begin{equation}\label{dual1}
\left\{
\begin{array}{rlll}
-\nabla\cdot(\sigma\nabla \psi)&=&0& \mbox{ in } \Omega\, ,\\
\psi+z_l\sigma\frac{\partial \psi}{\partial n}&=&\Psi_l& \mbox{ on }e_l  \mbox{ for } l=1,2,\ldots, L\, ,\\
\int_{e_l}\sigma\frac{\partial \psi}{\partial n}\, ds&=&E_{h,l}& 
\mbox{ for } l=1,2,\ldots, L\, ,\\
\sigma\frac{\partial \psi}{\partial n}&=&0& \mbox{ on } \Gamma-\cup_{l=1}^L e_l\, .
\end{array}
\right.
\end{equation}
From Lemma \ref{lemma2} and Lemma \ref{staF}, the dual problem has the $H^{1}$-regularity and $\Vert \psi\Vert_{H^1(\Omega)}\leq C\Vert E_h\Vert$. We will further assume that $\psi|_T \in H^{2}(T)$ and $\sigma\nabla \psi|_T\in [H^1(T)]^d$ for all $T \in \mathcal{T}_h$ when mesh size $h\leq h_0$, and 
  \begin{equation}\label{assumpdual}
  \sum_{T\in\mathcal{T}_h}\Vert \psi\Vert_{H^2(T)}+\sum_{T\in\mathcal{T}_h}\Vert \sigma\nabla\psi\Vert_{[H^1(T)]^d}\leq C\Vert E_h\Vert.
  \end{equation}

The next theorem provides the error estimate for electrode voltage $U$.
\begin{thm} \label{errorU}
Let the regularity assumptions in Lemma~\ref{erroreqlemma}  and  \eqref{assumpdual} hold. Then we have 
\begin{eqnarray*}
\| E_h \| \leq C\Bigg(h \left(\sum_{T\in\mathcal{T}_h}\Vert u\Vert^2_{H^{2}(T)}\right)^{1/2}+h \left(\sum_{T\in\mathcal{T}_h}\Vert \sigma\nabla u\Vert_{[H^{1}(T)]^d}^2\right)^{1/2}+\left(\sum_{T\in\mathcal{T}_h} \Vert(\sigma-\sigma_h) \nabla u\Vert^2_{[L^2(T)]^d}\right)^{1/2}\Bigg).
\end{eqnarray*}
\end{thm}
\begin{proof}
Testing the first equation of the dual problem \eqref{dual1} with $e_0$ on each element and applying integration by parts, we obtain \begin{equation}
\begin{split}
0&=\sum_{T\in\mathcal{T}_h} \int_T (-\nabla\cdot(\sigma\nabla \psi(x))\cdot e_0(x)dx\\
 &=-\sum_{T\in\mathcal{T}_h} \la \sigma\nabla \psi\cdot \mathbf{n}, e_0 \ra_{\partial T}+\sum_{T\in\mathcal{T}_h}(\sigma\nabla \psi, \nabla e_0)_T\\
&=-\sum_{T\in\mathcal{T}_h} \la \sigma\nabla \psi\cdot \mathbf{n}, e_0-e_b \ra_{\partial T}+\sum_{T\in\mathcal{T}_h}(\sigma\nabla \psi, \nabla e_0)_T-\sum_{l=1}^L \la \sigma\nabla \psi\cdot \mathbf{n}, e_b \ra_{e_l}\, .\label{l2error1}
\end{split}
\end{equation}
Taking $\phi=\psi$, $v_h=e_h$ in the equality \eqref{error1} yields \begin{equation}
(\sigma_h\nabla_w Q_h\psi, \nabla_w e_h)_T+\la \sigma_h \mathbb{Q}_h(\nabla\psi)\cdot\mathbf{n}, e_0-e_b\ra_{\partial T}+((\sigma-\sigma_h)\nabla\psi,\nabla e_0)_T
=(\sigma\nabla\psi, \nabla e_0)_T\, .\label{l2error2}
\end{equation}
With \eqref{l2error2} and the boundary conditions in \eqref{dual1}, we can further deduce from \eqref{l2error1} that 
\begin{equation}
\label{l2error3}
\begin{split}
  \sum_{l=1}^L \la E_{h,l}, E_{h,l}\ra_{e_l}=&\sum_{T\in\mathcal{T}_h} \la (\sigma_h \mathbb{Q}_h(\nabla\psi)-\sigma\nabla \psi)\cdot \mathbf{n}, e_0-e_b \ra_{\partial T}+\sum_{T\in\mathcal{T}_h} (\sigma_h\nabla_w Q_h\psi, \nabla_w e_h)_T\\
  & +\sum_{l=1}^Lz_l^{-1}\la \psi-\Psi, e_b-E_h \ra_{e_l}
 +\sum_{T\in\mathcal{T}_h}((\sigma-\sigma_h)\nabla \psi,\nabla e_0)_T .
 \end{split}
\end{equation}
Recall that $(e_h, E_h)$ satisfies the error equation \eqref{erroreqn}. Taking $(v_h, V_h) = (Q_h\psi, \Psi) \in \mathbb{H}_h$ in  \eqref{erroreqn} yields
\beqn
\begin{aligned}
\nonumber(\sigma_h\nabla_w Q_h\psi, \nabla_w e_h)=&\sum_{T\in\mathcal{T}_h} \la (\sigma_h\mathbb{Q}_h (\nabla u)-\sigma\nabla u)\cdot\mathbf{n}, Q_0 \psi-Q_b\psi\ra_{\partial T}+\sum_{T\in\mathcal{T}_h} ((\sigma-\sigma_h) \nabla  u, \nabla Q_0 \psi)_T\\
\nonumber &-s(\sigma_h, ( u_h, U_h),( Q_h\psi, \Psi))-\sum_{l=1}^L z_l^{-1}(Q_b\psi-\Psi, e_b-E_h)_{e_l}\, .\label{l2error4}
\end{aligned}
\eqn
Thus we can rewrite \eqref{l2error3} as
\begin{align}
 \nonumber \sum_{l=1}^L \la E_{h,l}, E_{h,l}\ra_{e_l}=&\sum_{T\in\mathcal{T}_h} \la \sigma_h(\mathbb{Q}_h(\nabla\psi)-\nabla \psi)\cdot \mathbf{n}, e_0-e_b \ra_{\partial T}+\sum_{T\in\mathcal{T}_h} \la \sigma_h(\mathbb{Q}_h (\nabla u)-\nabla u)\cdot\mathbf{n}, Q_0 \psi-Q_b\psi\ra_{\partial T}\\
 \nonumber&+\sum_{T\in\mathcal{T}_h} \la (\sigma_h-\sigma)\nabla \psi\cdot \mathbf{n}, e_0-e_b \ra_{\partial T}+\sum_{T\in\mathcal{T}_h} \la (\sigma_h-\sigma)\nabla u\cdot \mathbf{n}, Q_0\psi-Q_b\psi \ra_{\partial T} \\
 &-s(\sigma_h, ( u_h, U_h),( Q_h\psi, \Psi))+\sum_{T\in\mathcal{T}_h} ((\sigma-\sigma_h) \nabla  u, \nabla  Q_0 \psi)_T+\sum_{T\in\mathcal{T}_h}((\sigma-\sigma_h)\nabla\psi,\nabla e_0)_T\, .\label{error}
 \end{align}
Next, we estimate each of these terms on the right-hand side of \eqref{error}. 
For the first term, it follows from \eqref{4lemma4.1} and Lemma~\ref{lemma6} that 
\begin{equation}
\begin{split}
& \sum_{T\in\mathcal{T}_h} \la \sigma_h( \mathbb{Q}_h(\nabla\psi)-\nabla \psi)\cdot \mathbf{n}, e_0-e_b \ra_{\partial T} \\
\leq \; & \sum_{T\in\mathcal{T}_h} \Vert \sigma_h (\mathbb{Q}_h (\nabla \psi)-\nabla \psi) \cdot \mathbf{n}\Vert_{L^2(\partial T)} \Vert e_0-e_b\Vert_{L^2(\partial T)}\\
\leq \; & C \left(\sum_{T\in\mathcal{T}_h} h_T\Vert (\mathbb{Q}_h (\nabla \psi)-\nabla \psi) \cdot \mathbf{n} \Vert ^2_{L^2(\partial T)}\right)^{1/2} \left(\sum_{T\in\mathcal{T}_h} h_T^{-1}\Vert e_0-e_b\Vert^2_{L^2(\partial T)}\right)^{1/2}\\
\leq \; &  C h \left( \sum_{T\in\mathcal{T}_h} \Vert\psi\Vert^2_{H^{2}(T)} \right)^{1/2}|||(e_h, E_h)|||_h\, . 
\end{split}
\label{error4.1}
\end{equation}
Similarly, for the second term, 
we can deduce from the approximation properties \eqref{3lemma4.1}, \eqref{4lemma4.1}, and \eqref{lemma4.4} that 
\begin{equation}
\begin{split}
& \sum_{T\in\mathcal{T}_h} \la \sigma_h (\mathbb{Q}_h (\nabla u)-\nabla u)\cdot\mathbf{n}, Q_0 \psi-Q_b\psi\ra_{\partial T} \\
\leq \; & \sum_{T\in\mathcal{T}_h} \Vert \sigma_h (\mathbb{Q}_h (\nabla u)-\nabla u) \cdot \mathbf{n}\Vert _{L^2(\partial T)} \Vert Q_0 \psi-Q_b\psi\Vert_{L^2(\partial T)}\\
\leq \; & C \left(\sum_{T\in\mathcal{T}_h} h_T\Vert (\mathbb{Q}_h (\nabla u)-\nabla u) \cdot \mathbf{n} \Vert ^2_{L^2(\partial T)}\right)^{1/2} \left(\sum_{T\in\mathcal{T}_h} h_T^{-1} \Vert Q_0 \psi - Q_b \psi \Vert^2_{L^2(\partial T)}\right)^{1/2}\\
\leq \; &  C h \left( \sum_{T\in\mathcal{T}_h} \Vert u\Vert^2_{H^{2}(T)} \right)^{1/2} \| \psi \|_{H^1(\Omega)}. 
\end{split}
\label{error4.2}
\end{equation}
 By the trace inequality \eqref{trace} and Lemma~\ref{lemma6},
 the third and forth terms on the right-hand side of \eqref{error} satisfy
 \begin{equation}\label{error4.3}
\begin{split}
&\sum_{T\in\mathcal{T}_h} \la (\sigma_h-\sigma)\nabla \psi\cdot \mathbf{n}, e_0-e_b \ra_{\partial T} \\
&\leq C\left(\left(\sum_{T\in\mathcal{T}_h}\Vert (\sigma_h-\sigma)\nabla \psi\cdot \mathbf{n}\Vert^2_{L^2(T)}\right)^{1/2}+  h \left(\sum_{T\in\mathcal{T}_h}\Vert (\sigma_h-\sigma)\nabla \psi\Vert_{[H^{1}(T)]^d}^2\right)^{1/2}\right)\cdot|||(e_h, E_h)|||_h\, ,\\
&\sum_{T\in\mathcal{T}_h} \la (\sigma_h-\sigma)\nabla u\cdot \mathbf{n}, Q_0\psi-Q_b\psi \ra_{\partial T}\\
&\leq C \left(\left(\sum_{T\in\mathcal{T}_h}\Vert(\sigma_h-\sigma)\nabla u\cdot \mathbf{n}\Vert^2_{L^2(T)}\right)^{1/2}  +h \left(\sum_{T\in\mathcal{T}_h}\Vert (\sigma_h-\sigma)\nabla u\Vert_{[H^{1}(T)]^d}^2\right)^{1/2}\right)\cdot \Vert\psi\Vert_{H^1(\Omega)}\, ,
\end{split}
\end{equation}
where we have used the regularity assumptions for the dual problem \eqref{dual1}.
By the triangle inequality, we observe that the fifth term on the right-hand side of \eqref{error} satisfies
\begin{equation*}
|s(\sigma_h, ( u_h, U_h),( Q_h\psi, \Psi))|\leq |s(\sigma_h, ( e_h, E_h),( Q_h\psi, \Psi))|+|s(\sigma_h, ( Q_h u, U),( Q_h\psi, \Psi))|\, ,
\end{equation*}
and it further admits upper bound using the approximation properties \eqref{3lemma4.1}, \eqref{4lemma4.1}  and Lemma~\ref{lemma6}:
\begin{equation}\label{error4.4}
|s(\sigma_h, ( e_h, E_h),( Q_h\psi, \Psi))|\leq C h \left(\sum_{T\in\mathcal{T}}\Vert \psi\Vert_{H^2(T)}^2\right)^{1/2} |||(e_h, E_h)|||_h\,,
\end{equation}
\begin{equation}
\begin{split}
|s(\sigma_h, ( Q_h u, U),( Q_h\psi, \Psi))|
&\leq\sum_{T\in\mathcal{T}_h} h_T^{-1}|\la Q_b(Q_0 u)-Q_b u, Q_b(Q_0\psi)-Q_b \psi\ra_{\partial T}|\\
&\leq\sum_{T\in\mathcal{T}_h} h_T^{-1}\Vert Q_b(Q_0 u - u)\Vert_{L^2(\partial T)}\Vert Q_b(Q_0\psi - \psi)\Vert_{L^2(\partial T)}\\
&\leq C\sum_{T\in\mathcal{T}_h} h_T^{-1}\Vert Q_0 u- u\Vert_{L^2(\partial T)}\Vert Q_0\psi-\psi\Vert_{L^2(\partial T)}\\
&\leq C\left(\sum_{T\in\mathcal{T}_h} h_T^{-1}\Vert Q_0 u- u\Vert^2_{L^2(\partial T)}\right)^{1/2}\left(\sum_{T\in\mathcal{T}_h} h_T^{-1}\Vert Q_0\psi-\psi\Vert^2_{L^2(\partial T)}\right)^{1/2}\\
&\leq Ch^2 \left(\sum_{T\in\mathcal{T}_h} \Vert u\Vert^2_{H^{2}(T)}\right)^{1/2}  \left(\sum_{T\in\mathcal{T}_h}\Vert \psi\Vert^2_{H^{2}(T)}\right)^{1/2}\, .
\end{split}
\label{error4.5}
\end{equation}
Finally, for the last two terms on the right-hand side of \eqref{error}, we employ  \eqref{1lemma4.1} and the norm equivalence in Lemma~\ref{lemma6} to conclude
\begin{equation}
\begin{split}
\sum_{T\in\mathcal{T}_h} ((\sigma-\sigma_h) \nabla u, \nabla Q_0 \psi)_T
&\leq C\left(\sum_{T\in\mathcal{T}_h} \Vert(\sigma-\sigma_h) \nabla u\Vert_{[L^2(T)]^d}^2\right)^{1/2} \Vert \psi\Vert_{H^1(\Omega)},\\
\sum_{T\in\mathcal{T}_h}((\sigma-\sigma_h)\nabla\psi,\nabla e_0)_T&\leq C |||(e_h, E_h)|||_h\Vert\psi\Vert_{H^1(\Omega)}.
\end{split}
\label{error4.6}
\end{equation}
Substituting \eqref{error4.1}--\eqref{error4.6} into \eqref{error} and applying Theorem~\ref{tribnorm} yield
\begin{eqnarray*}
 \Vert E_h\Vert^2&\leq & C\left(h \left(\sum_{T\in\mathcal{T}_h}\Vert u\Vert^2_{H^{2}(T)}\right)^{1/2}+h \left(\sum_{T\in\mathcal{T}_h}\Vert \sigma\nabla u\Vert_{[H^{1}(T)]^d}^2\right)^{1/2}+ \left(\sum_{T\in\mathcal{T}_h}\Vert  (\sigma-\sigma_h)\nabla u\Vert_{[L^2(T)]^d}^2\right)^{1/2}\right)\\
&&\cdot\left(\Vert \psi\Vert_{H^1(\Omega)}+h \left(\sum_{T\in\mathcal{T}_h}\Vert \psi\Vert^2_{H^{2}(T)}\right)^{1/2}+  h \left(\sum_{T\in\mathcal{T}_h}\Vert \sigma\nabla \psi\Vert_{[H^{1}(T)]^d}^2\right)^{1/2}\right).
\end{eqnarray*}
Together with the regularity of the dual problem \eqref{dual1} and the assumption \eqref{assumpdual}, this leads to the desired result
\begin{eqnarray*}
 \Vert E_h\Vert&\leq& C\left(h \left(\sum_{T\in\mathcal{T}_h}\Vert u\Vert^2_{H^{2}(T)}\right)^{1/2}+h \left(\sum_{T\in\mathcal{T}_h}\Vert \sigma\nabla u\Vert_{[H^{1}(T)]^d}^2\right)^{1/2}+ \left(\sum_{T\in\mathcal{T}_h}\Vert  (\sigma-\sigma_h)\nabla u\Vert_{[L^2(T)]^d}^2\right)^{1/2}\right).
\end{eqnarray*}

\end{proof}

\section{{Convergence analysis}}\label{sec5}

In this section, we discretize the regularized optimality system \eqref{mini} and establish the convergence of the discrete minimizers utilizing the error estimates derived in Section \ref{sec4}.

To prepare for the discretization, we first present the following result \cite[Theorem 3.1]{asas1999regularization} concerning the space of piecewise constant functions, $W_h$. It states that $W_h$ is a subspace of $BV(\Omega)$ and 
provides an explicit formula for the total variation of the piecewise constant functions.
\begin{lem}\label{disBV}
For any $\sigma_h \in W_h$, we have $\sigma_h \in BV(\Omega)$ and 
\begin{equation}
\int_\Omega \vert D \sigma_h \vert = \dfrac{1}{2} \sum_{T_1, T_2\in\mathcal{T}_h} |\sigma_{h,1}-\sigma_{h,2}||\partial T_1\cap \partial T_2|,
\end{equation}
where $\sigma_{h,i}$ is the value of $\sigma_h$ on $T_i$.
\end{lem}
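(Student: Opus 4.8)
The plan is to verify the claimed identity directly from the definition of the total variation, using the structure of piecewise constant functions. First I would observe that on each open element $T$, $\sigma_h$ is constant, so $\sigma_h$ has no distributional gradient in the interior of any $T$; all of the ``variation'' must be concentrated on the skeleton $\bigcup_T \partial T$. Thus for any test field $g \in [C_0^1(\Omega)]^2$ with $|g|_\infty \le 1$, I would split $\int_\Omega \sigma_h \,\mathrm{div}\, g\, dx = \sum_{T\in\mathcal{T}_h} \sigma_{h,T} \int_T \mathrm{div}\, g\, dx$ and apply the divergence theorem on each $T$ to get $\sum_T \sigma_{h,T} \int_{\partial T} g\cdot \mathbf{n}_T\, ds$. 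Since $g$ has compact support in $\Omega$, every boundary piece $\partial T \cap \partial \Omega$ contributes nothing, and every interior edge $\partial T_1 \cap \partial T_2$ is traversed twice with opposite outward normals, so the sum collapses to $\sum_{\text{interior edges } E = \partial T_1 \cap \partial T_2} (\sigma_{h,1} - \sigma_{h,2}) \int_E g\cdot \mathbf{n}_{12}\, ds$ (with a fixed orientation choice $\mathbf{n}_{12}$ on each edge).

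Next I would bound this edgewise: $\left| \int_E g\cdot \mathbf{n}_{12}\, ds \right| \le |E|$ because $|g\cdot\mathbf{n}_{12}| \le |g|_\infty \le 1$ on $E$ (here $|E| = |\partial T_1 \cap \partial T_2|$ is the length, in 2D, of the shared edge). Summing over interior edges yields $\int_\Omega \sigma_h\, \mathrm{div}\, g\, dx \le \sum_{E} |\sigma_{h,1} - \sigma_{h,2}|\, |E|$, and taking the supremum over admissible $g$ gives the upper bound $\int_\Omega |D\sigma_h| \le \sum_E |\sigma_{h,1}-\sigma_{h,2}|\,|E|$, which is exactly $\tfrac12 \sum_{T_1,T_2} |\sigma_{h,1}-\sigma_{h,2}|\,|\partial T_1 \cap \partial T_2|$ since the double sum over ordered pairs counts each edge twice and vanishing (non-adjacent or equal-value) terms drop out. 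In particular this already shows $\sigma_h \in BV(\Omega)$.

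For the reverse inequality I would construct a near-optimal test field: on each interior edge $E = \partial T_1 \cap \partial T_2$ with $\sigma_{h,1} \ne \sigma_{h,2}$, one wants $g$ to point (approximately) in the direction $\mathrm{sgn}(\sigma_{h,1}-\sigma_{h,2})\,\mathbf{n}_{12}$ along $E$ with magnitude close to $1$, while smoothly interpolating and keeping compact support. Formally, one mollifies the discontinuous ``ideal'' field: for $\eta > 0$ take $g_\eta$ supported in an $\eta$-neighborhood of the skeleton, equal in direction to the appropriate signed normal near each edge, with $|g_\eta|_\infty \le 1$; as $\eta \to 0$ the integral $\int_\Omega \sigma_h\, \mathrm{div}\, g_\eta\, dx \to \sum_E |\sigma_{h,1}-\sigma_{h,2}|\,|E|$. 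Some care is needed near vertices where several edges meet and near $\partial\Omega$, but since the vertex set has measure (indeed, length) zero and contributes $O(\eta)$ to the relevant integrals, it does not affect the limit. I would remark that this reverse-direction construction is the only genuinely delicate point; the forward direction is a routine divergence-theorem computation. Alternatively — and more cleanly — I would simply cite \cite{asas1999regularization}, where this formula for the total variation on piecewise constant spaces is established, and keep the presentation short by reproducing only the divergence-theorem half together with a reference for the matching lower bound.
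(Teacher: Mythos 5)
The paper does not prove this lemma at all: it is quoted directly from Casas--Kunisch--Pola \cite{asas1999regularization}, so your closing remark (cite the reference and keep only a sketch) is exactly what the paper does, while your main text goes further and actually supplies the standard argument. That argument is sound in outline and is the classical one: the distributional gradient of a piecewise constant is a measure on the skeleton, the divergence theorem elementwise gives the upper bound, and a mollified near-optimal vector field concentrated near the edges gives the matching lower bound; the only genuinely delicate point is indeed the reverse construction, which you correctly isolate but leave at the level of a sketch (recovering $|[\sigma_h]|\,|E|$ in the limit requires a little care with the trace of $\mathrm{div}\,g_\eta$ near vertices, though nothing goes wrong).

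One concrete caveat: you take the paper's definition literally, with the constraint $|g(x)|_\infty\le 1$ in the $\ell^\infty$ norm on $\mathbb{R}^2$, and then assert $|g\cdot\mathbf{n}_{12}|\le |g|_\infty\le 1$. That inequality is false for non-axis-aligned edge normals (e.g.\ $g=(1,1)$, $\mathbf{n}=(1/\sqrt2,1/\sqrt2)$ gives $g\cdot\mathbf{n}=\sqrt2$); under the $\ell^\infty$ constraint the supremum actually produces $\sum_E |\sigma_{h,1}-\sigma_{h,2}|\,\|\mathbf{n}_E\|_{\ell^1}|E|$, with $\ell^1$ weights on the normals, not the stated formula. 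The identity in the lemma corresponds to the Euclidean constraint $|g(x)|\le 1$, which is what \cite{asas1999regularization} effectively uses; so either switch to the Euclidean constraint in your proof or note that the formula acquires the anisotropic weights $\|\mathbf{n}_E\|_{\ell^1}$. This mismatch is arguably inherited from the paper's own definition of $\int_\Omega|Dq|$, but as written it is the one step in your upper-bound chain that does not hold verbatim.
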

 We also state the following approximation property \cite[Theorem 3.4]{asas1999regularization}.
\begin{lem}\label{constBV}
For every $\sigma\in \mathcal{A}$ there exists a sequence of $\{\sigma_h \}$ with  $\sigma_h\in \mathcal{A}_h$  such that
\beqn\nb
\lim_{h\to 0}\int_\Omega |\sigma-\sigma_h|\, dx=0\ \mbox{ and }\ \lim_{h\to 0}\int_{\Omega} \vert D \sigma_h \vert =\int_\Omega |D\sigma|\, .
\eqn
\end{lem}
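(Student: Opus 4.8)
The plan is to build the approximating sequence by chaining two density steps --- first smoothing $\sigma$ inside $BV(\Omega)$ while keeping the pointwise bounds, then discretizing the resulting smooth function by piecewise constants --- and to finish with a diagonal selection. Note first that one half of the total-variation statement comes for free: as soon as $\sigma_h\to\sigma$ in $L^1(\Omega)$, Lemma~\ref{lsc_tv} gives $\int_\Omega|D\sigma|\le\liminf_{h\to0}\int_\Omega|D\sigma_h|$. Hence the genuine task is to exhibit a sequence $\{\sigma_h\}\subset\mathcal{A}_h$ that converges to $\sigma$ in $L^1$ and whose total variation does \emph{not} asymptotically overshoot $\int_\Omega|D\sigma|$ (a ``recovery sequence'').

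\emph{Step 1 (smoothing in $BV$).} Extend $\sigma$ to a slightly larger set and mollify, $\sigma^{\varepsilon}:=\sigma\ast\rho_{\varepsilon}$, with the usual boundary correction so that $\sigma^{\varepsilon}\in C^{\infty}(\overline{\Omega})$. Classical $BV$ theory (density of smooth functions in the strict/intermediate topology) yields $\sigma^{\varepsilon}\to\sigma$ in $L^{1}(\Omega)$ and $\int_\Omega|\nabla\sigma^{\varepsilon}|\,dx\to\int_\Omega|D\sigma|$ as $\varepsilon\to0$. Since mollification is an average of translates it preserves $\lambda\le\sigma^{\varepsilon}\le\lambda^{-1}$; if the boundary correction spoils this, post-compose with the truncation $T_{\lambda}(t)=\min\{\lambda^{-1},\max\{\lambda,t\}\}$, which is $1$-Lipschitz (so it does not increase the total variation, preserves $L^1$-convergence, and equals the identity on $[\lambda,\lambda^{-1}]$). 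Thus $\sigma^{\varepsilon}\in\mathcal{A}\cap C^{\infty}(\overline{\Omega})$.

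\emph{Step 2 (discretization of a smooth function) and diagonalization.} Fix $\varepsilon$, write $g=\sigma^{\varepsilon}$, and let $g_h\in W_h$ be the elementwise $L^2$-projection of $g$ (interpolation by cell averages). Then $g_h\in\mathcal{A}_h$, since averaging preserves the bounds, and $g_h\to g$ in $L^1(\Omega)$ (indeed uniformly, by uniform continuity of $g$). Using the explicit formula for $N_h$, $\int_\Omega|Dg_h|=\tfrac12\sum_{e=\partial T_1\cap\partial T_2}|e|\,|g_{h,1}-g_{h,2}|$, and each jump $|g_{h,1}-g_{h,2}|$ is controlled by the variation of $g$ over the two adjacent elements, so this sum is a Riemann-type quantity for $\int_\Omega|\nabla g|\,dx$ and $\limsup_{h\to0}\int_\Omega|Dg_h|\le\int_\Omega|\nabla g|\,dx$; together with the lower-semicontinuity bound, $\lim_{h\to0}\int_\Omega|Dg_h|=\int_\Omega|\nabla g|\,dx$. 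Now pick $\varepsilon_k\downarrow0$ and, for each $k$, choose $h_k$ so small that $\|g_{h_k}^{\varepsilon_k}-\sigma^{\varepsilon_k}\|_{L^{1}(\Omega)}\le 1/k$ and $\bigl|\int_\Omega|Dg_{h_k}^{\varepsilon_k}|-\int_\Omega|\nabla\sigma^{\varepsilon_k}|\bigr|\le 1/k$. Setting $\sigma_{h_k}:=g_{h_k}^{\varepsilon_k}\in\mathcal{A}_{h_k}$ and invoking Step 1, we get $\sigma_{h_k}\to\sigma$ in $L^1(\Omega)$ and $\int_\Omega|D\sigma_{h_k}|\to\int_\Omega|D\sigma|$, which is the claim.

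The delicate point --- the technical heart of the argument in \cite{asas1999regularization} --- is the $\limsup$ inequality in Step 2: one must ensure the total variation of the piecewise-constant approximant converges \emph{exactly} to that of the smooth function rather than merely staying comparable to it, and here the geometry of the triangulation (edge orientations relative to the gradient of $g$, shape regularity) genuinely intervenes; when the naive cell-average interpolation overshoots, one instead constructs $g_h$ from polygonal approximations of the super-level sets $\{g>t\}$ and assembles the estimate via the coarea formula. Everything else is standard density and approximation, and the matching lower bound is always supplied by Lemma~\ref{lsc_tv}.
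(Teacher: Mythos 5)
The paper does not actually prove Lemma~\ref{constBV}: it is stated as a density result imported from \cite{asas1999regularization}, so there is no in-paper argument to compare against, and your attempt has to stand on its own. Its skeleton is the natural one and partly sound: Lemma~\ref{lsc_tv} does give the $\liminf$ inequality for free once $\sigma_h\to\sigma$ in $L^1(\Omega)$, the strict (smooth) approximation of a $BV$ function combined with the $1$-Lipschitz truncation onto $[\lambda,\lambda^{-1}]$ is a legitimate first step, and the final diagonal extraction is routine \emph{provided} Step 2 holds for each fixed smooth $g$.

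The gap is precisely Step 2, and it is not a removable technicality: the claim that the elementwise-average projection satisfies $\limsup_{h\to0}\int_\Omega|Dg_h|\le\int_\Omega|\nabla g|\,dx$ is false. The jump of $g_h$ across an edge shared by $T_1,T_2$ is $\approx|\nabla g\cdot(b_{T_2}-b_{T_1})|$ (barycenters $b_{T_i}$), so the edge sum converges to a mesh-dependent \emph{anisotropic} functional that generically overshoots $\int|\nabla g|$ by a factor bounded away from $1$. Concretely, for $g(x)=x_1$ on the uniform mesh of squares of side $h$ split by one diagonal, each square contributes $\tfrac{\sqrt2}{3}h^2+\tfrac23 h^2+\tfrac13 h^2$, so $\int_\Omega|Dg_h|\to\tfrac{3+\sqrt2}{3}\int_\Omega|\nabla g|\approx1.47\,\int_\Omega|\nabla g|$; ``controlled by the local variation of $g$'' only yields $\limsup\le C\int|\nabla g|$ with $C>1$, which cannot be closed by the lower-semicontinuity bound. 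You acknowledge this and defer to a level-set/coarea construction from \cite{asas1999regularization}, but that construction \emph{is} the content of the lemma, and it is genuinely delicate: since $\int_\Omega|D\sigma_h|$ for $\sigma_h\in W_h$ is carried by mesh edges, on a structured family of triangulations the edge normals take only finitely many directions, and a dual-pairing argument (pair $D\sigma_h$ with the direction of $\nabla g$) shows that for a ramp with generic gradient direction every $L^1$-convergent sequence of piecewise constants has $\liminf\int|D\sigma_h|$ strictly larger than $\int|\nabla g|$ unless the mesh family satisfies suitable directional hypotheses. So naive cell averaging fails, a naive staircase approximation of level sets also overshoots (it replaces lengths by their $\ell^1$ analogues), and the recovery-sequence inequality — the heart of the lemma, and the reason the paper delegates it to \cite{asas1999regularization} — is left unproved in your attempt.
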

Thus the discrete analogue of the minimization problem \eqref{mini} reads: 
\begin{equation}\label{DM}
\min_{\sigma_h \in \mathcal{A}_h} 
\left\{J_h(\sigma_h)=\frac{1}{2}\Vert U_h(\sigma_h)-U^{\delta}\Vert^2+\alpha N_h(\sigma_h)\right\},
\end{equation}
where
\beqn \nb
N_h(\sigma_h)=\dfrac{1}{2} \sum_{T_1, T_2\in\mathcal{T}_h} |\sigma_{h,1}-\sigma_{h,2}||\partial T_1\cap \partial T_2|\, .
\eqn
Next we present the existence and stability of the solution to  \eqref{DM} with respect to the measurement data.
The proofs are identical to those presented for Theorem~\ref{exist} and Theorem~\ref{THM3}, and thus omitted for clarity.
\begin{thm}
There exists at least one solution to the discrete minimization problem \eqref{DM}.
\end{thm}
\begin{thm}
 Let $\{U_n^\delta\}_{n=1}^\infty\subset \mathbb{R}^L_\diamond$ be a sequence of noisy data converging to $U^{\delta}$, and $\sigma_h^n$ be a minimizer to $J_h$ with $U^\delta_n$ in place of $U^\delta$. Then the sequence $\{\sigma^n_h\}_{n=1}^\infty$ has a subsequence converging to a minimizer of $J_h$.
\end{thm}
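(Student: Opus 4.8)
The plan is to follow the template of Theorem~\ref{THM3}, exploiting the crucial simplification that the mesh $\mathcal{T}_h$ is now held fixed, so $W_h$ is finite dimensional and $\mathcal{A}_h$ is a compact subset of it; consequently no genuine $BV$-compactness (Lemmas~\ref{lsc_tv}, \ref{bv_in_L1}) is needed. Write $J_h^n$ for the functional obtained from $J_h$ by replacing $U^\delta$ by $U^\delta_n$, so that $J_h^n(\sigma_h)=\tfrac12\|U_h(\sigma_h)-U^\delta_n\|^2+\tfrac{\alpha}{2}N_h(\sigma_h)$ and $\sigma_h^n$ minimizes $J_h^n$ over $\mathcal{A}_h$.

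First I would extract a convergent subsequence. Since $\lambda\in(0,1)$, the constant function $1$ belongs to $\mathcal{A}_h$ and $N_h(1)=0$, so by minimality $J_h^n(\sigma_h^n)\le J_h^n(1)=\tfrac12\|U_h(1)-U^\delta_n\|^2\le\|U_h(1)\|^2+\sup_{n\ge1}\|U^\delta_n\|^2<\infty$, the last quantity being finite because $\{U^\delta_n\}$ converges. Hence $\{N_h(\sigma_h^n)\}_n$ is bounded; together with the pointwise bound $\lambda\le\sigma_h^n\le\lambda^{-1}$ this shows $\{\sigma_h^n\}_n$ is bounded in the finite-dimensional space $W_h$, so by Bolzano--Weierstrass a subsequence (not relabelled) converges to some $\sigma_h^\star$. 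Because all norms on $W_h$ are equivalent, this convergence holds in $L^1(\Omega)$ in particular, and the box constraints pass to the limit, so $\sigma_h^\star\in\mathcal{A}_h$.

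Next I would pass to the limit in the optimality inequality. This requires two continuity statements: (i) $\sigma_h\mapsto U_h(\sigma_h)$ is continuous on $\mathcal{A}_h$, and (ii) $N_h$ is continuous on $W_h$. Statement (ii) is immediate, since $N_h(\sigma_h)=\tfrac12\sum_{T_1,T_2}|\sigma_{h,1}-\sigma_{h,2}|\,|\partial T_1\cap\partial T_2|$ is a finite sum of continuous functions of the element values. For (i) I would subtract the two instances of \eqref{WGfor}, test against the difference of the two discrete solutions, and use the coercivity of $a_s(\sigma_h,\cdot,\cdot)$ uniformly on $\mathcal{A}_h$ (which follows from Lemma~\ref{lemma6} and $\sigma_h\ge\lambda$) together with $\lambda\le\sigma_h\le\lambda^{-1}$ and the uniform bound $\|\mathcal{F}_h(\cdot)\|_{\mathbb{H}_h}\le C$ on $\mathcal{A}_h$; this yields a Lipschitz estimate $\|\mathcal{F}_h(\sigma_h)-\mathcal{F}_h(\tilde\sigma_h)\|_{\mathbb{H}_h}\le C\|\sigma_h-\tilde\sigma_h\|_{L^\infty(\Omega)}$, the discrete analogue of Lemma~\ref{staF}. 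With (i), (ii) and $U^\delta_n\to U^\delta$ in hand, for any fixed $\sigma_h\in\mathcal{A}_h$ one obtains
\[
J_h(\sigma_h^\star)=\lim_{n\to\infty}J_h^n(\sigma_h^n)\le\lim_{n\to\infty}J_h^n(\sigma_h)=J_h(\sigma_h),
\]
so $\sigma_h^\star$ minimizes $J_h$ over $\mathcal{A}_h$, as claimed.

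I expect the only non-mechanical step to be continuity fact (i), the continuous (indeed Lipschitz) dependence of the discrete forward operator $\mathcal{F}_h$ on the conductivity; the remainder is a routine, and genuinely easier, repetition of the continuous-level proof, since here the subsequence extraction needs nothing more than finite-dimensional compactness and $N_h$ is continuous rather than merely lower semicontinuous. If one wished to keep the write-up verbatim parallel to Theorem~\ref{THM3}, one could instead invoke the lower semicontinuity bound $N_h(\sigma_h^\star)\le\liminf_n N_h(\sigma_h^n)$ from Lemma~\ref{lsc_tv} in place of continuity of $N_h$; the chain of inequalities and the conclusion are unaffected.
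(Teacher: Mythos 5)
Your proposal is correct, and its overall skeleton (uniform bound via comparison with the constant conductivity $1$, subsequence extraction, continuity of the data-fit term, then the chain of inequalities against an arbitrary competitor $\sigma_h\in\mathcal{A}_h$) is the same as the paper's. The differences are in the two technical ingredients. For compactness, the paper simply reuses the $BV$ machinery of Theorem~\ref{THM3}: it bounds $\{N_h(\sigma_h^n)\}$ and invokes Lemma~\ref{bv_in_L1} to get an $L^1$-convergent subsequence in $\mathcal{A}_h$, whereas you observe that for fixed $h$ the box constraint $\lambda\le\sigma_h^n\le\lambda^{-1}$ already gives boundedness in the finite-dimensional space $W_h$, so Bolzano--Weierstrass suffices and the $BV$ lemmas are not needed; likewise you may use plain continuity of $N_h$ where the paper's chain uses the lower-semicontinuity bound $N_h(\sigma_h^\star)\le\liminf_n N_h(\sigma_h^n)$ (both are legitimate, as you note). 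For the convergence of the discrepancy term, the paper is terse: it says the argument is ``similar to Theorem~\ref{THM3}'' and appeals to equivalence of norms on the finite-dimensional space $S_h\times\mathbb{R}^L_\diamond$, leaving the continuity of the discrete forward map $\sigma_h\mapsto U_h(\sigma_h)$ implicit; you instead prove it directly by subtracting the two instances of \eqref{WGfor}, testing with the difference, and using the uniform coercivity of $a_s(\sigma_h,\cdot,\cdot)$ from Lemma~\ref{lemma6} together with the uniform bound on the discrete solutions, yielding a Lipschitz estimate in $\|\sigma_h-\tilde\sigma_h\|_{L^\infty(\Omega)}$ (with the Euclidean control of the $U_h$-component following from norm equivalence on the fixed finite-dimensional space). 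Your route is thus somewhat more elementary and more self-contained on precisely the step the paper glosses over; the paper's route has the advantage of staying verbatim parallel to the continuous stability proof.
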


The remaining of this section is devoted to establishing the convergence of the solution to \eqref{DM}.  More precisely, we will prove that a sequence of minimizers to the discrete minimization problems \eqref{DM} will converge subsequentially to a minimizer to the continuous minimization problem \eqref{mini} as the mesh size turns to zero.

\begin{thm}
Let $\{\sigma_{h_k}^\star\}_{k=1}^\infty$ be a sequence of minimizers of the discrete minimization problems \eqref{DM} with mesh size $h_k$, where $h_k\to 0$ as $k\to\infty$. Under the assumptions of Theorem~\ref{errorU}, there exists a subsequence of $\{\sigma_{h_k}^\star\}_{k=1}^\infty$ converging in $L^1(\Omega)$ to a minimizer of the continuous problem \eqref{mini}.
\end{thm}
\begin{proof}
By the minimizing property of $\sigma_{h_k}^\star$ to the functional $J_{h_k}$ over $\mathcal{A}_{h_k}$,  we have 
$$J_h(\sigma_{h_k}^\star) \leq J_{h_k}(1) = \dfrac{1}{2} \| U_{h_k}(1) - U^\delta \|^2.$$ 
Therefore $\{N_{h_k}(\sigma_{h_k})\}_{k=1}^\infty$ is bounded and hence the sequence $\{\sigma_{h_k}^\star\}_{k=1}^\infty$ is bounded in BV norm following Lemma~\ref{disBV}. 
By Lemma~\ref{lsc_tv}, there exist $\sigma^\star \in \mathcal{A}$ and a subsequence, still denoted by $\{\sigma_{h_k}^\star\}_{k=1}^\infty$, such that $\sigma_{h_k}^\star \to \sigma^\star$ in $L^1(\Omega)$ as $k\to \infty$. 
We claim that  $\sigma^\star$ is a minimizer of \eqref{mini}.
For any $\sigma\in \mathcal{A}$, Lemma~\ref{constBV} implies that there exists a sequence $\{\sigma_{h_k}\}_{k=1}^\infty$ with  $\sigma_{h_k}\in \mathcal{A}_{h_k}$ such  that
\begin{equation}\label{seq}
\lim_{k\to \infty}\int_\Omega |\sigma-\sigma_{h_k}|\, dx=0\ \mbox{and}\ \lim_{k\to \infty}N_{h_k}(\sigma_{h_k})=N(\sigma).
\end{equation}
Again, by the minimizing property  of $\{\sigma_h^\star\}$, there holds
\begin{equation}\label{mi}
J_h(\sigma_h^\star)\leq  J_h(\sigma_h)\, .
\end{equation}
Using Theorem~\ref{errorU} and Lemma~\ref{lsc_tv}, together with \eqref{seq} and \eqref{mi}, we deduce that
\begin{align*}
J(\sigma^\star) &=\frac{1}{2}\Vert U(\sigma^\star)-U^\delta\Vert^2+\alpha N(\sigma^\star)\\
&\leq \frac{1}{2}\lim_{h\to 0} \Vert U_h(\sigma^\star_h)-U^\delta\Vert^2+\alpha\liminf\limits_{h\to 0}   N_h(\sigma_h^\star)\\
&\leq \liminf\limits_{h\to 0} J_h(\sigma_h^\star)\\
&\leq \liminf\limits_{h\to 0} J_h(\sigma_h)\\
&= \lim_{h\to 0} \left(\frac{1}{2}\Vert U_h(\sigma_h)-U^\delta\Vert^2+\alpha N_h(\sigma_h)\right)\\
&=\frac{1}{2}\Vert U(\sigma)-U^\delta\Vert^2+\alpha N(\sigma)\\
&=J(\sigma)\, .
\end{align*}
Hence, we conclude that  $\sigma^\star$ is a minimizer of problem \eqref{mini}.
\end{proof}

\begin{section}{Numerical experiments}\label{sec6}

In this section, we  present some numerical examples to showcase the WG method for the forward process and the proposed BV-based regularization approach for the inverse process in EIT. All the computations were carried out using MATLAB 2018b on a personal laptop with 8.00 GB RAM and 2.7 GHz CPU. The setup of these numerical experiments is as follows. We take the domain $\Omega$  as a square $(0,1)^2$. 
There are $16$ electrodes $\{e_l\}_{l=1}^L$ ($L=16$) evenly distributed along the boundary $\Gamma$, each of length $1/8$. 
We set all the contact impedances $\{z_l\}_{l=1}^L$ to unit and the background conductivity $\sigma_0\equiv 1$. 

\subsection{Experiment 1: Convergence rate of WG method}

In this experiment, we examine the convergence of the WG method for the process. We set up a model problem \eqref{contieqn} with given physical data, i.e. conductivity field $\sigma^\dag$, unit contact impedance $\{z_l\}_{l=1}^L$ and input current $I$, and solve the problem numerically by the WG algorithm \ref{algo1} at difference mesh size $h$.
\begin{exam}\label{ex:1}
The exact conductivity is given by $\sigma^\dag\equiv 1$ and the input current $I$ is sinusoidal, i.e., $I_i =\sin(i\pi/4)$, $i=1,2,...,16.$
  \end{exam}

Figure~\ref{forward} depicts the numerical solutions $u_h$ to the WG algorithm \ref{algo1} with the triangulations of mesh size $h = 1/16$ and $h = 1/64$. 
Since there is no close formula for the analytical solution to this model problem \eqref{contieqn}, we take the numerical solution with mesh size $h=1/128$ as the reference solution. 
Table~\ref{conv} records the error at different mesh size $h$ and the convergence rate. 
It is noted that the convergence rate of the WG algorithm is at least $O(h)$ in error of both potential $u$ and electrode voltage $U$, which verifies our theoretical results.

\begin{figure}[ht!]
\centering
\subfigure[mesh size $h=1/16$]{
\label{ c}
\includegraphics[width=0.4\textwidth]{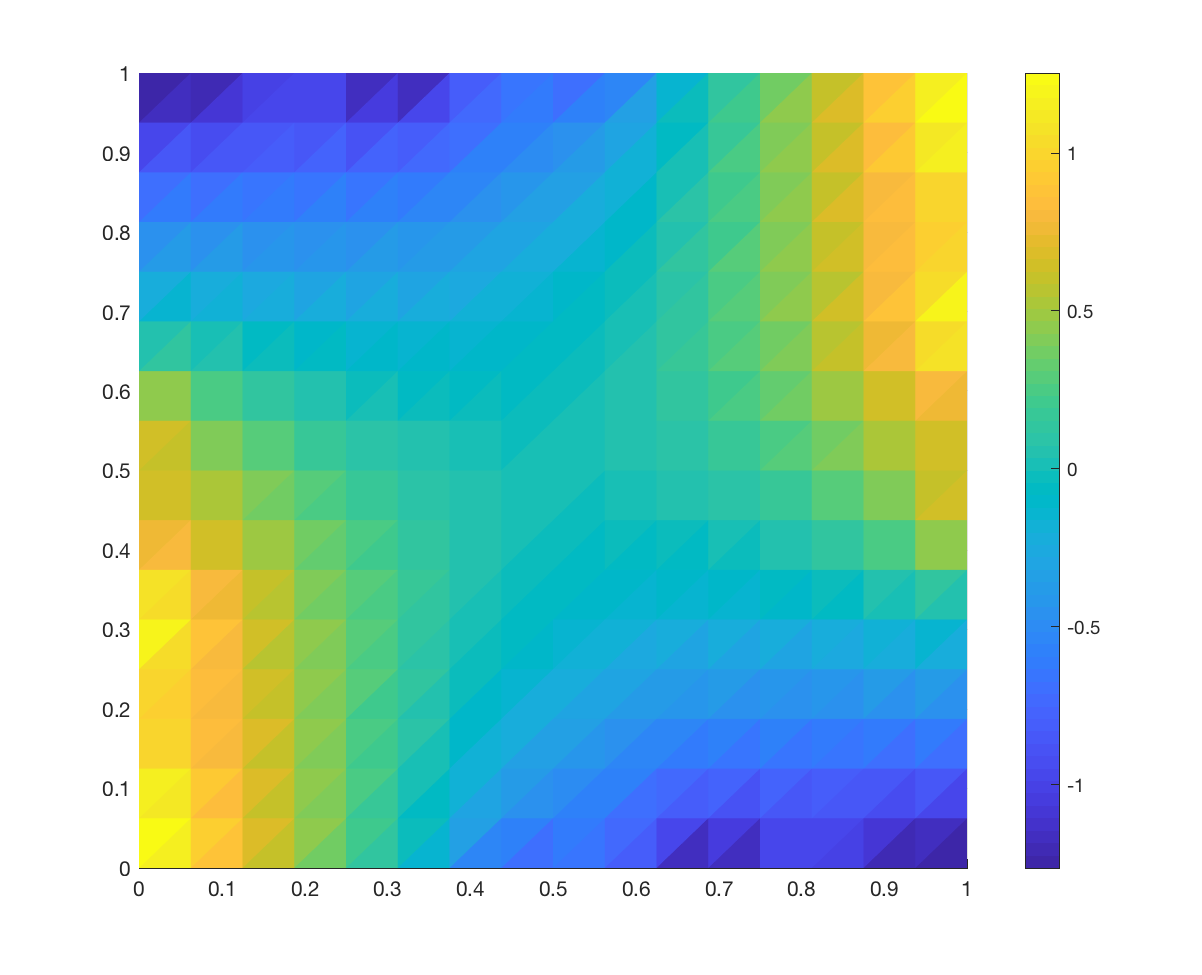}}
\subfigure[mesh size $h=1/64$]{
\label{Fig.sub.2}
\includegraphics[width=0.4\textwidth]{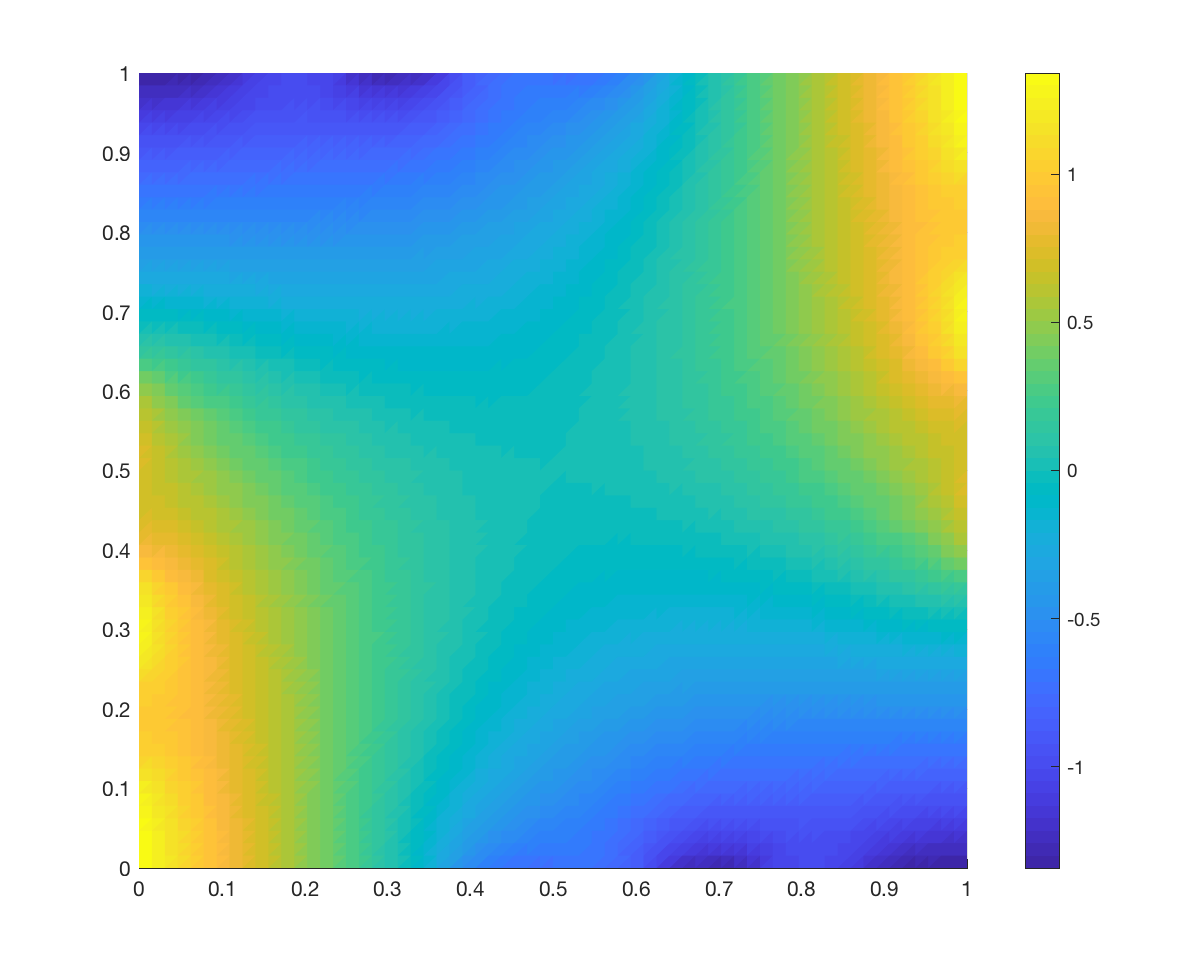}}
\caption{Approximated interior voltage $u_h$ using WG method}
\label{forward}
\end{figure}


\begin{table}[!htbp]
\centering
\begin{tabular}{c||c|c||c|c}
  $h$&$\Vert e_h\Vert_{L^2(\Omega)}$& order &$\Vert E_h \Vert $ & order \\
\hline
\hline
1/8  & $1.39  \times 10^{-1}  $& -- &$6.45\times 10^{-1}$ & -- \\
\hline
1/16 & $ 6.75\times 10^{-2}$& 1.0376 & $2.25 \times 10^{-2}$ &  1.5225 \\
\hline
1/32 & $3.28\times 10^{-2}$ &  1.0438 &$6.88\times 10^{-2}$ &  1.7075 \\
\hline
1/64 & $1.46 \times 10^{-2}$&  1.1637 &$ 1.65\times 10^{-2}$ &  2.0629 \\
\end{tabular}
\caption{History of convergence of WG methods}\label{conv}
\end{table}

\subsection{Experiment 2: Reconstruction of conductivity}

In this experiment, we present several numerical
examples to illustrate the effectiveness of the proposed BV-based least-squares approach  \eqref{DM} for EIT.  
The electrode voltages $U$ are generated and measured for $10$ times corresponding to ten sinusoidal input currents to gain enough information for the sought-for conductivity $\sigma^\dag$. 
In each example, we will generate the exact data $U(\sigma^\dag)$ on a mesh that is much finer than the mesh used for reconstruction to avoid the ``inverse crime''. 
To generate the noisy data $U^\delta$, we add component wise Gaussian noise to the exact data $U(\sigma^\dag)$ as follows:
\beqn\nb
U_l^\delta=U_l(\sigma^\dag)+\epsilon\max_j |U_j(\sigma^\dag)|\xi_l,\ l=1,2,...,L \, ,
\eqn
where $\{\xi_l\}$ is taken following the standard normal distribution and $\epsilon$ is the (relative) noise level. 
In these examples, the regularization parameter $\alpha$ is taken in a trial-and-error manner, which suffices
our goal of illustrating the significant potentials of the proposed approach \eqref{DM} for EIT.

Here we briefly describe the numerical algorithm for the discrete minimization problem \eqref{DM}. 
It is noted that the objective functional $J_h$ in \eqref{DM} is non-differentiable. 
To this end, we introduce the Fast Iterative Shrinkage-Thresholding Algorithm (FISTA) \cite{beck2009fast}  to minimize the objective functional $J_h$. The basic idea of FISTA is to perform a gradient update on the differentiable part (the measurement discrepancy), take the image under a proximal map for the non-differentiable part (the total variation), and update the solution using the previous two iterations.  To calculate the G\^{a}teaux derivative in different directions for the measurement discrepancy term in $J_h$, we will use the WG method to solve an auxiliary dual problem derived from \eqref{WGfor}.
The detailed procedure is explained in Appendix~\ref{app1}. 
\begin{exam}\label{ex:2}
The exact conductivity is a linear function given by $\sigma^\dag(x,y) = \sigma_0+\frac{2}{3} x$ in $\Omega$ with the background conductivity $\sigma_0\equiv 0.5$. 
\end{exam}
In this example, we reconstruct the conductivity using exact measurement and noisy measurement with noise level $\epsilon = 0.1\%$ of the electrode voltage respectively, and compare the profiles in these two scenarios. The initial guess for the iterations is $\sigma_0\equiv 0.5$ in $\Omega$. The true conductivity $\sigma^\dag$ and the reconstruction $\sigma_h^\star$ with   mesh size $h = 1/64$ are depicted in Figure~\ref{example1}. 
It is observed that the reconstruction captures both the magnitude and shape of the exact conductivity very accurately, and for such smooth conductivity, this approach provides almost identical profiles, as shown in Figure \ref{li.sub.2} and \ref{li.sub.3},  for either the exact or the noisy data, with only slight difference near the boundary,  which justifies the robustness of the proposed approach thanks to the BV regularization.

In Figure~\ref{example1conv}, we plot the $L^2(\Omega)$ error of the reconstruction $\sigma_h^\star$ versus the mesh size $h$. The $L^2(\Omega)$ error for the reconstruction with noiseless and noisy measurements are plotted in blue and red respectively. 
It is observed  that the error $\|\sigma_h^\star - \sigma^\dag\|_{L^2(\Omega)}$ has a linear convergence in both cases, which justifies the effectiveness and the robustness of our approach with respect to measurements. 

\begin{figure}[ht!]
\centering
\subfigure[True conductivity $\sigma^\dag$]{
\label{li.sub.1}
\includegraphics[width=0.32\textwidth]{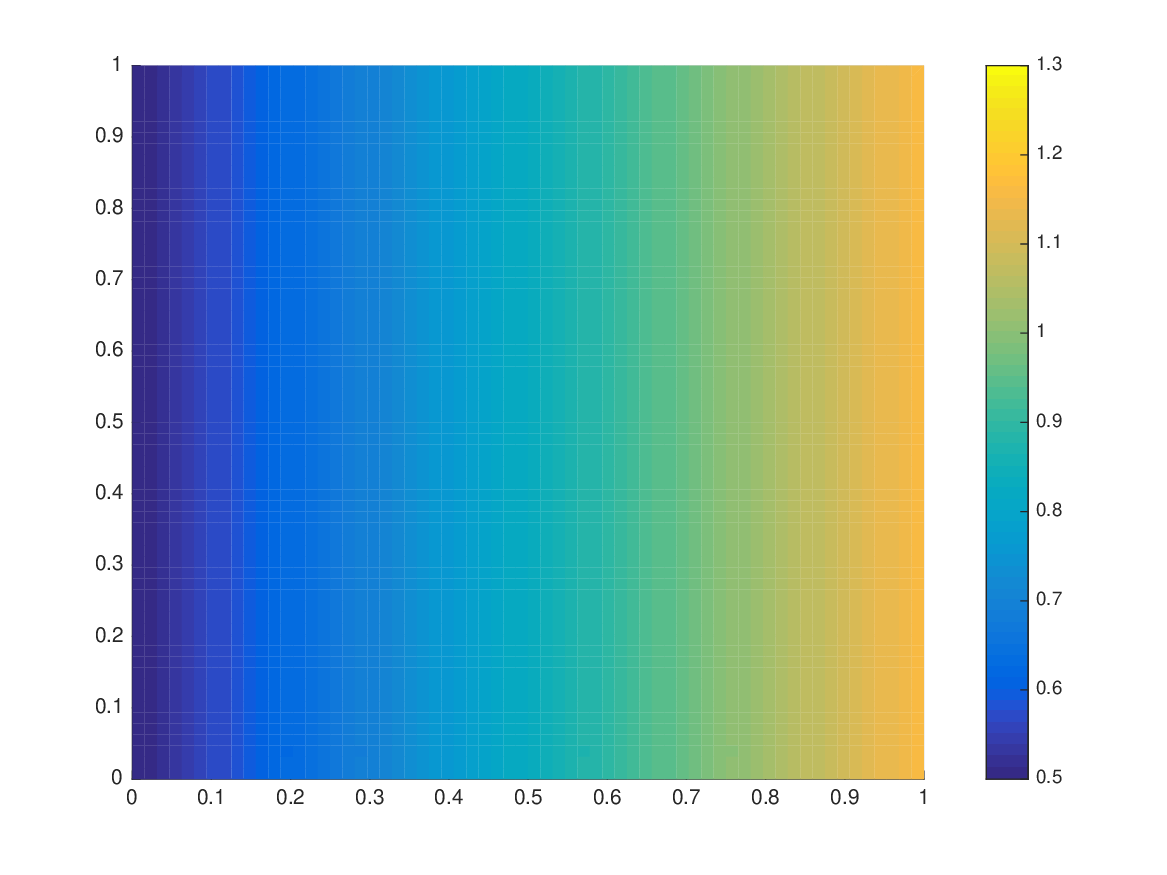}}
\subfigure[Reconstruction $\sigma_h^\star$ with noiseless measurements]{
\label{li.sub.2}
\includegraphics[width=0.32\textwidth]{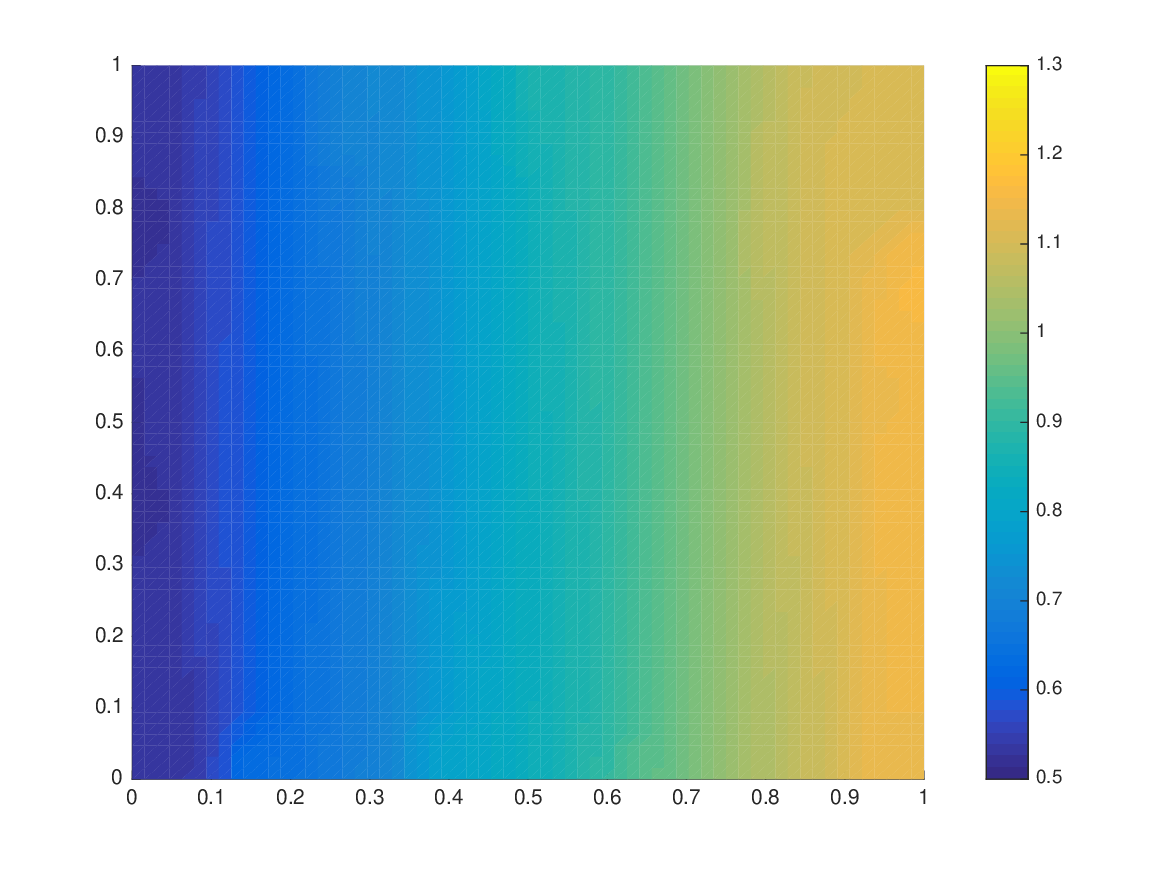}}
\subfigure[Reconstruction $\sigma_h^\star$ with noisy measurements]{
\label{li.sub.3}
\includegraphics[width=0.32\textwidth]{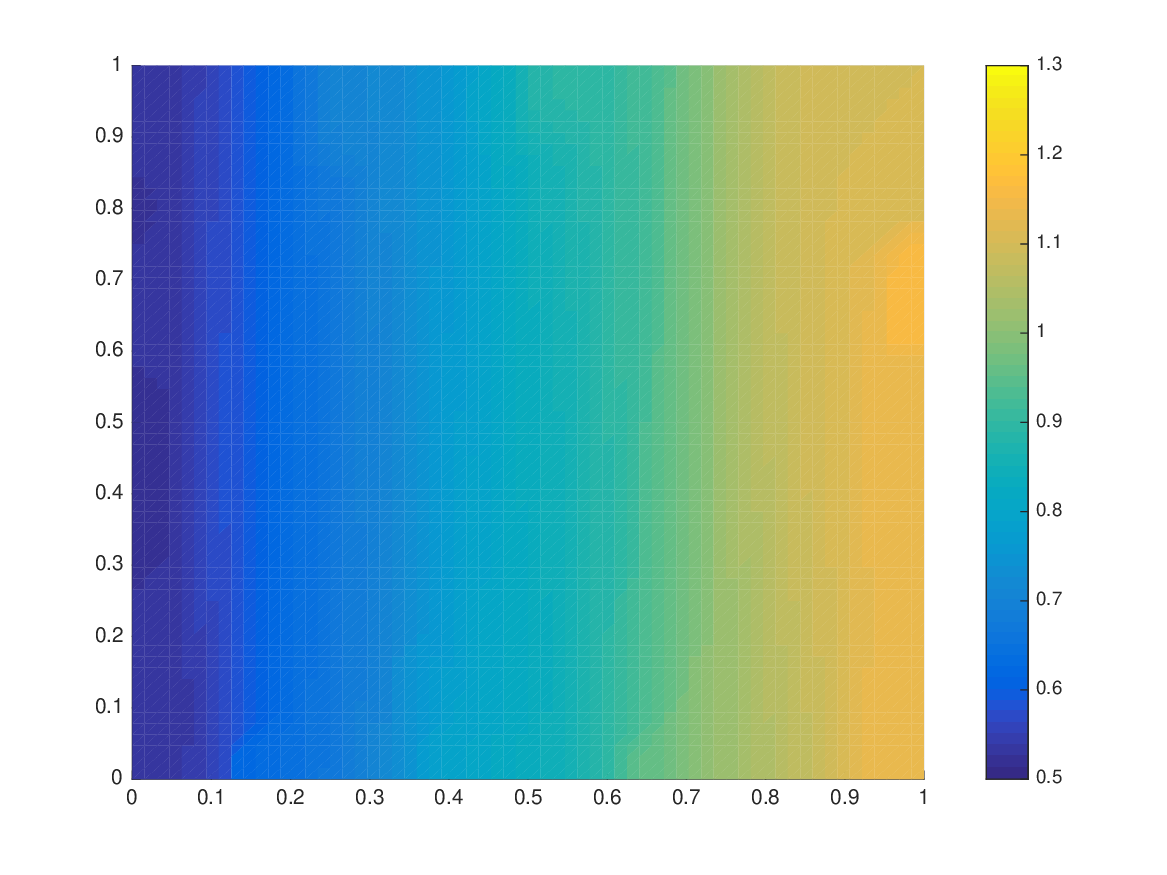}}
\caption{Plots of true conductivity and reconstruction with mesh size $h = 1/64$ in Example \ref{ex:2}.}
\label{example1}
\end{figure}

\begin{figure}[ht!]
\centering
\includegraphics[width=0.4\textwidth]{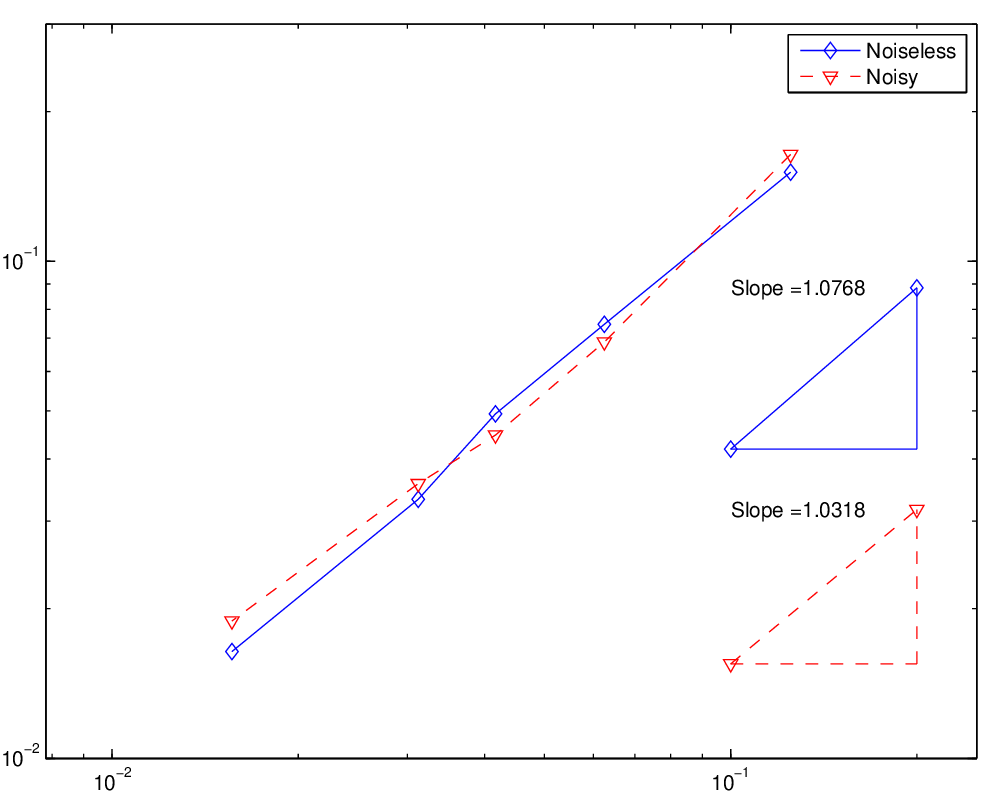}
\caption{The $L^2(\Omega)$ error $\|\sigma_h^\star - \sigma^\dag\|_{L^2(\Omega)}$ against the mesh size $h$ in Example \ref{ex:2}.}
\label{example1conv}
\end{figure}
%

\begin{exam}\label{ex:3}
The exact conductivity is  given by  
$\sigma^\dag=\sigma_0+0.2\exp(-8((x-0.6)^2+(y-0.6)^2)) $  in  $\Omega$ with the background conductivity $\sigma_0\equiv 1$. 
\end{exam}
In this example, the exact conductivity $\sigma^\dag$ consists of a smooth blob in a constant background as shown in Figure~\ref{Fig.sub.1}. The reconstruction $\sigma_h^\star$ from the noiseless measurement with the mesh size $h = 1/64$
are depicted in Figure~\ref{Fig.sub.2}, and the reconstruction from the  noisy measurement with noise level $\epsilon = 0.1\%$ is shown in Figure~\ref{Fig.sub.3}. The initial guess for the conductivity is given by $\sigma_0\equiv 1$ in $\Omega$.
 Despite some small oscillations near the boundary, it is observed that the  profile identifies well both the height and the location of the blob.  The reconstruction deteriorates only slightly in Figure~\ref{Fig.sub.3}, hence the proposed algorithm is stable with respect the the data noise. 

\begin{figure}[ht!]
\centering
\subfigure[True conductivity $\sigma^\dag$]{
\label{Fig.sub.1}
\includegraphics[width=0.32\textwidth]{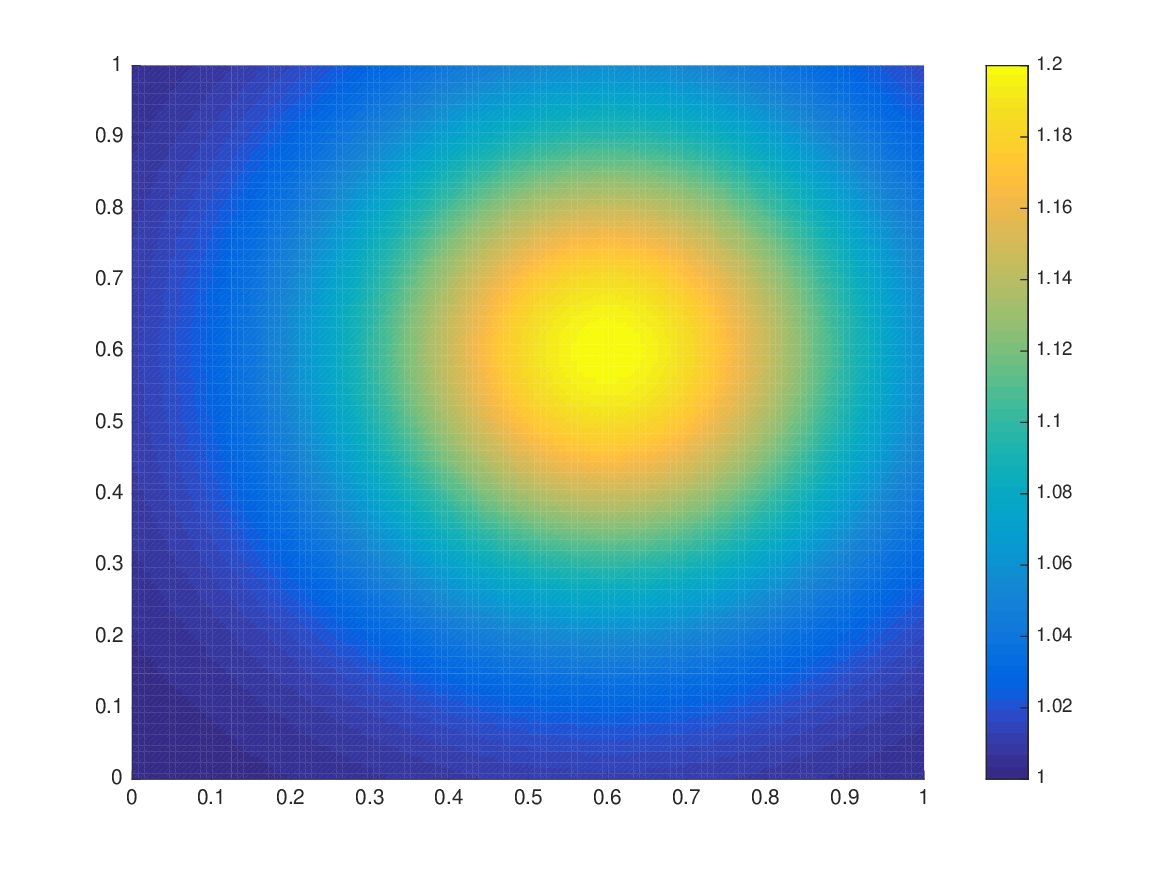}}
\subfigure[Reconstruction $\sigma_h^\star$ with noiseless measurements]{
\label{Fig.sub.2}
\includegraphics[width=0.32\textwidth]{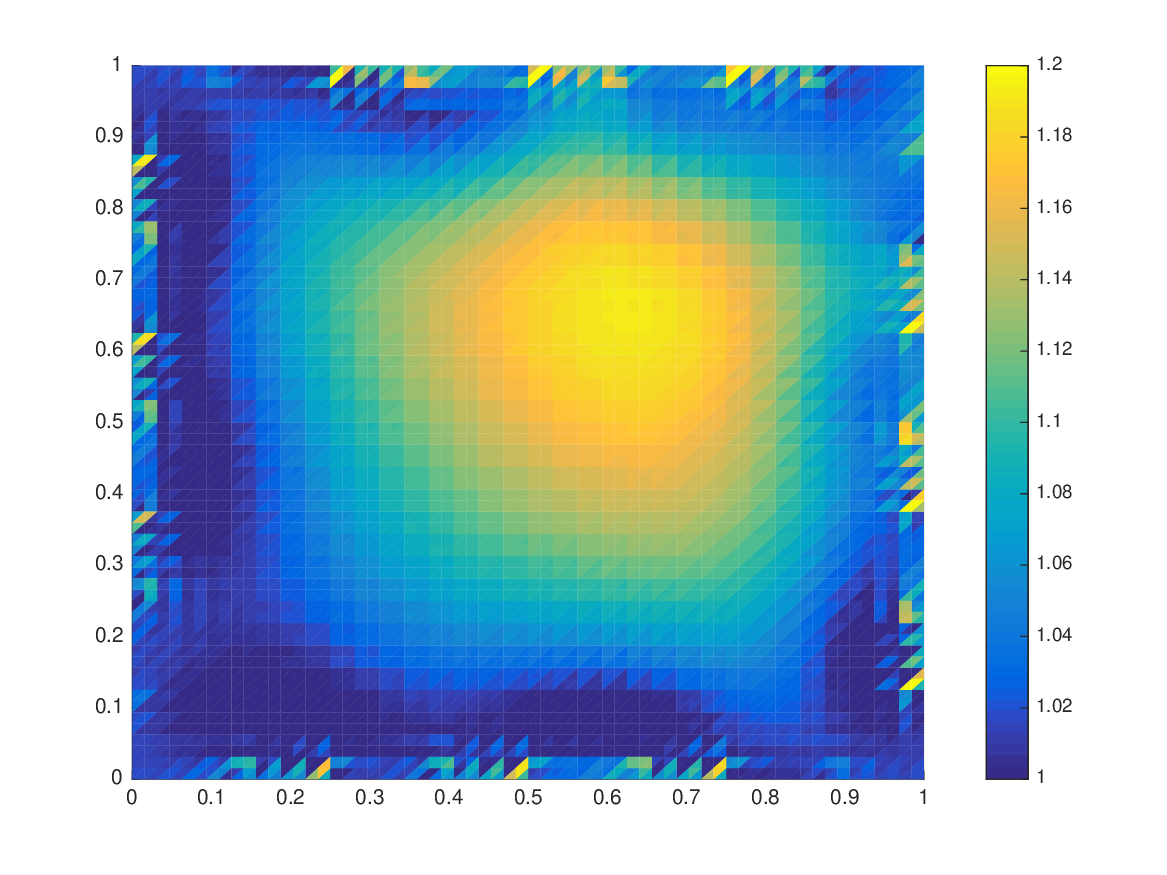}}
\subfigure[Reconstruction $\sigma_h^\star$ with noisy measurements]{
\label{Fig.sub.3}
\includegraphics[width=0.32\textwidth]{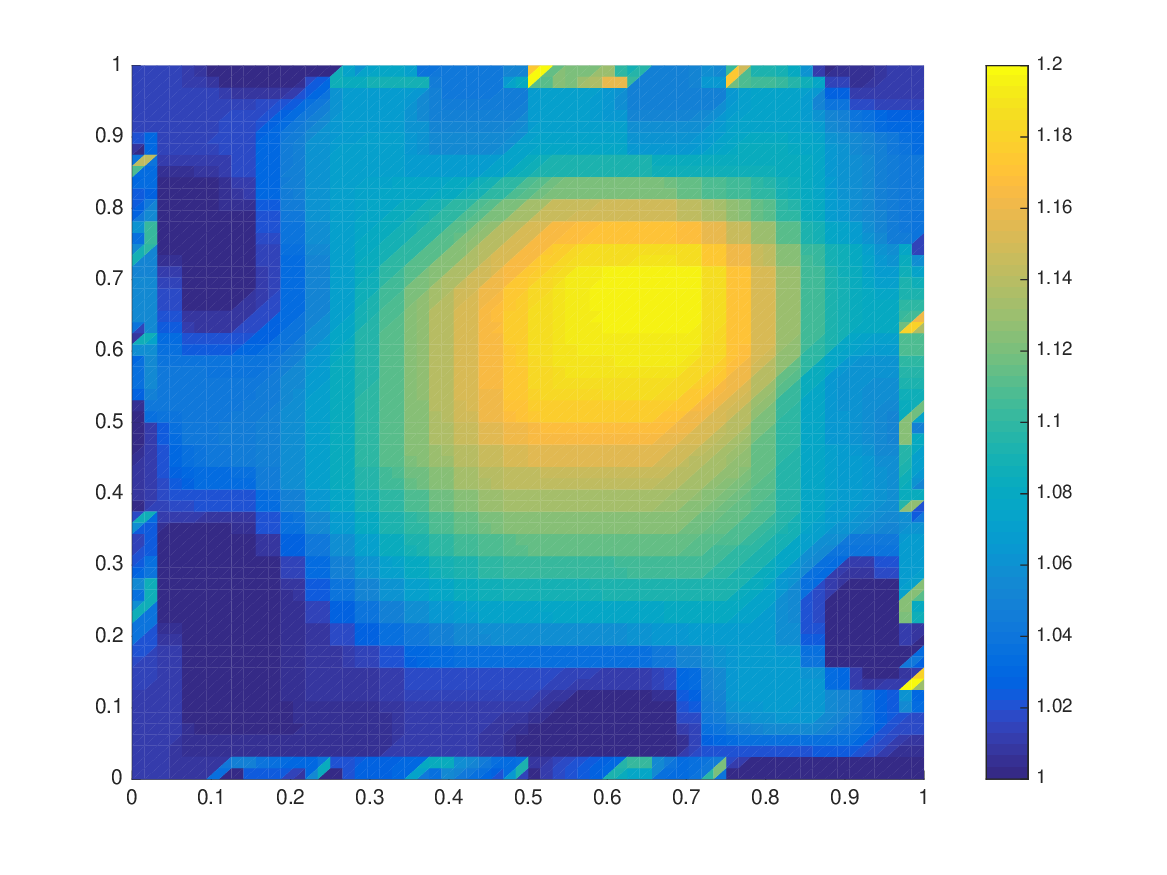}}
\caption{Plots of true  conductivity and reconstruction with mesh size $h = 1/64$ in Example \ref{ex:3}.}
\label{exmaple2}
\end{figure}


\begin{exam}\label{ex:4}
 We consider discontinuous conductivity fields in the following two scenarios:
 \begin{enumerate}
 \item The true conductivity field is given by $\sigma^\dag = \sigma_0+ 0.3\chi_{\Omega'}$, where $\chi_{\Omega'}$ is the characteristic function of the set $\Omega' = (0.1,0.3)\times(0.7,0.9)\cup (0.65,0.85)\times(0.1,0.3)$,  and the background conductivity $\sigma_0\equiv 1$.
\item The true conductivity field is given by $\sigma^\dag = \sigma_0+ 0.3\chi_{\Omega'}$, where $\chi_{\Omega'}$ is the characteristic function of the set $\Omega' = (0.15,0.35)\times(0.1,0.3)\cup (0.65,0.85)\times(0.1,0.3)\cup(0.15,0.35)\times(0.65,0.85)\cup (0.65,0.85)\times(0.65,0.85)$,  and the background conductivity $\sigma_0\equiv 1$.\end{enumerate}
\end{exam}

In our approach, the BV-based regularization method
allows us to seek general integrable functions with discontinuities.
We carry out experiments on such conductivity fields displayed in Figure~\ref{twoj.sub.1} and Figure~\ref{fourj.sub.1}, which feature with sharp high conductivity regions in the background. 
In these examples, we follow a refinement approach:
we start with  an initial guess for the conductivity $\sigma$ 
given by $\sigma_0\equiv 1$ in $\Omega$ with the mesh size $h = 1/32$, 
and solve the minimization problem \eqref{DM} using FISTA with 200 iterations.
Afterward, we refine the mesh size to $h = 1/64$, use the terminal solution from the coarser mesh with $h = 1/32$
as the initial guess,  and perform 80 FISTA iterations for the minimization problem \eqref{DM} on the refined triangulation. 
The true conductivities $\sigma^\dag$ and the reconstructions $\sigma_h^\star$ 
are depicted in Figure~\ref{Fig.lable5} and Figure~\ref{Fig.lable6}, and the noisy measurements are of noise level $\epsilon = 0.1\%$.

The observations from previous examples remain valid, as our numerical method successfully captures the support of the inhomogeneity and it is robust with respect to the measurement as shown in Figure~\ref{Fig.lable5} and Figure~\ref{Fig.lable6}. In Figure~\ref{Fig.lable5}, the two disjoint regions of inhomogeneity are captured and separated for both exact and noisy data.  As for a more challenging case in Figure~\ref{Fig.lable6}, the overall profile stands out clearly with the four supports of inhomogeneity identified and separated well. Although the magnitude of the reconstruction  suffers from a loss due to the BV penalty terms, the profile is still  reasonable in both cases with discontinuous exact conductivity, which verifies the effectiveness of the proposed algorithm.

\begin{figure}[ht!]
\centering
\subfigure[True conductivity $\sigma^\dag$]{
\label{twoj.sub.1}
\includegraphics[width=0.32\textwidth]{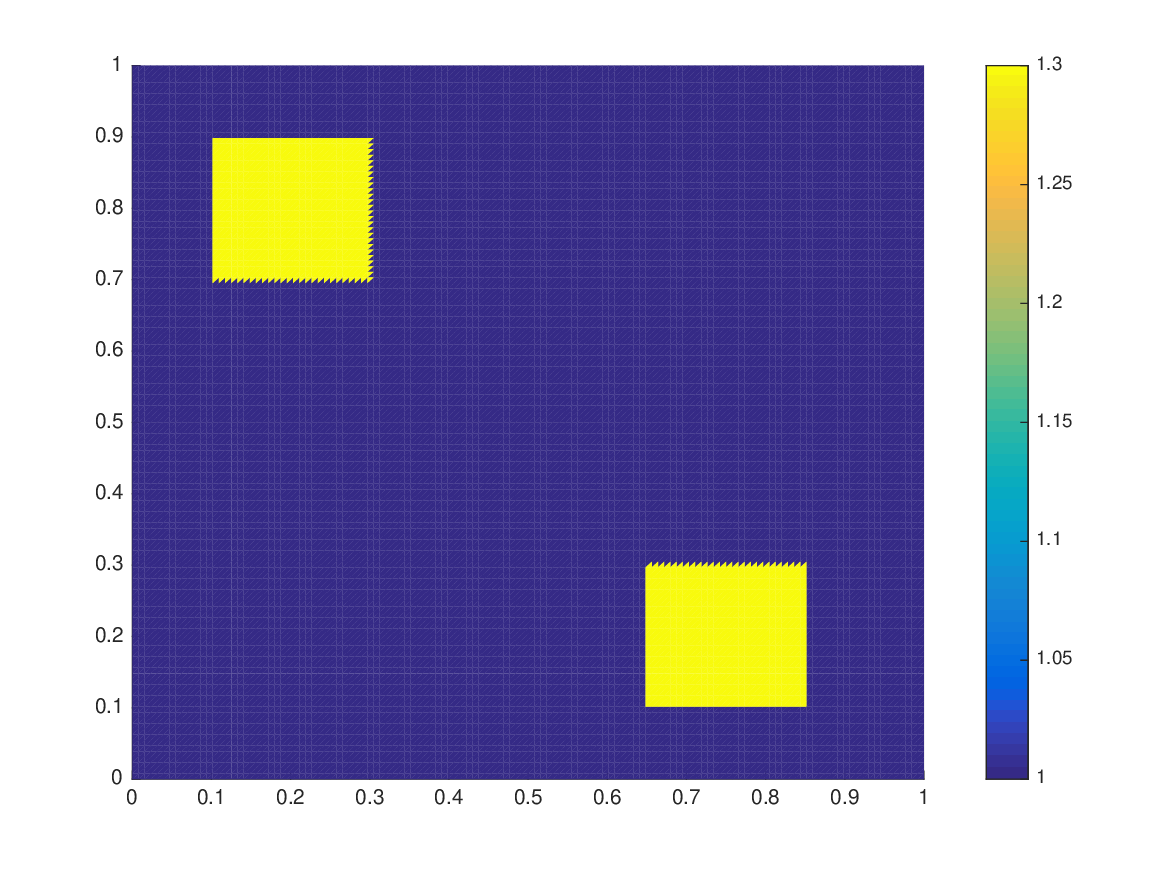}}
\subfigure[Reconstruction $\sigma_h^\star$ with noiseless measurements]{
\label{twoj.sub.2}
\includegraphics[width=0.32\textwidth]{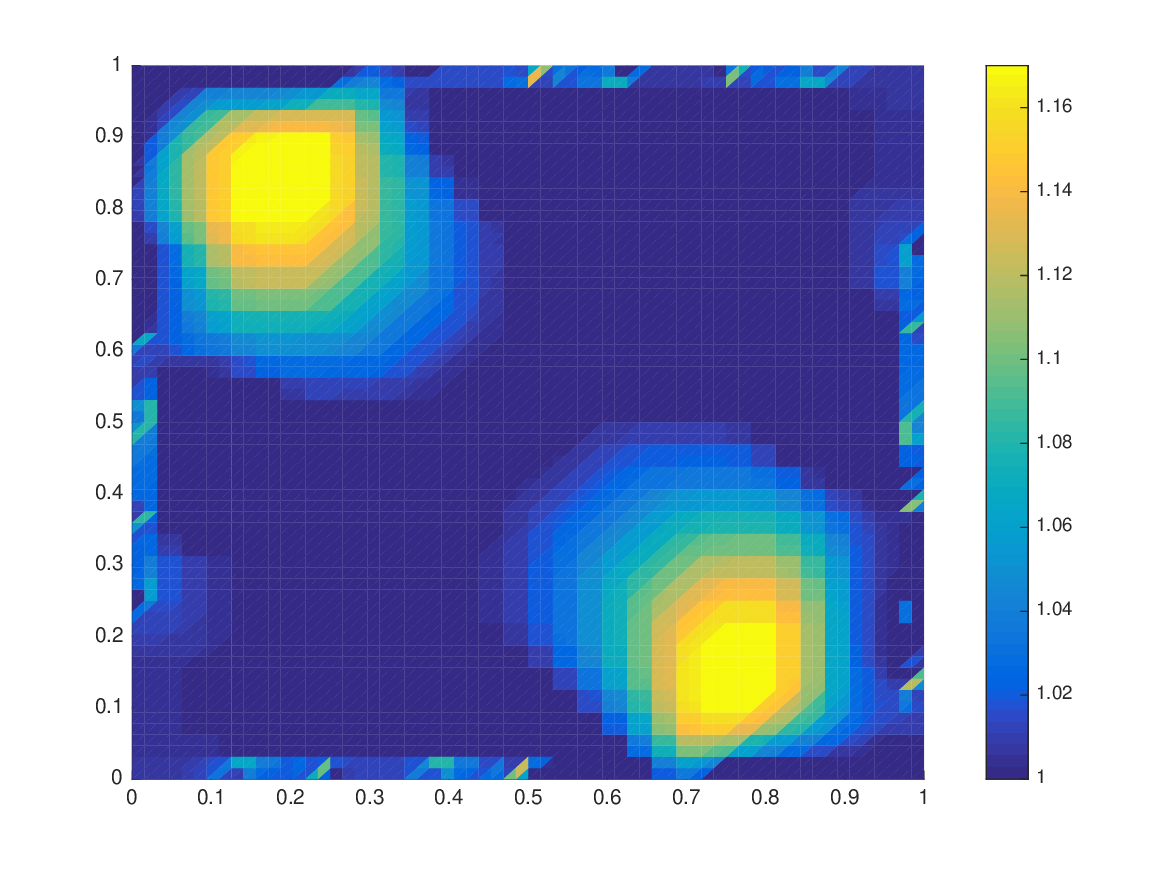}}
\subfigure[Reconstruction $\sigma_h^\star$ with noisy measurements]{
\label{twoj.sub.3}
\includegraphics[width=0.32\textwidth]{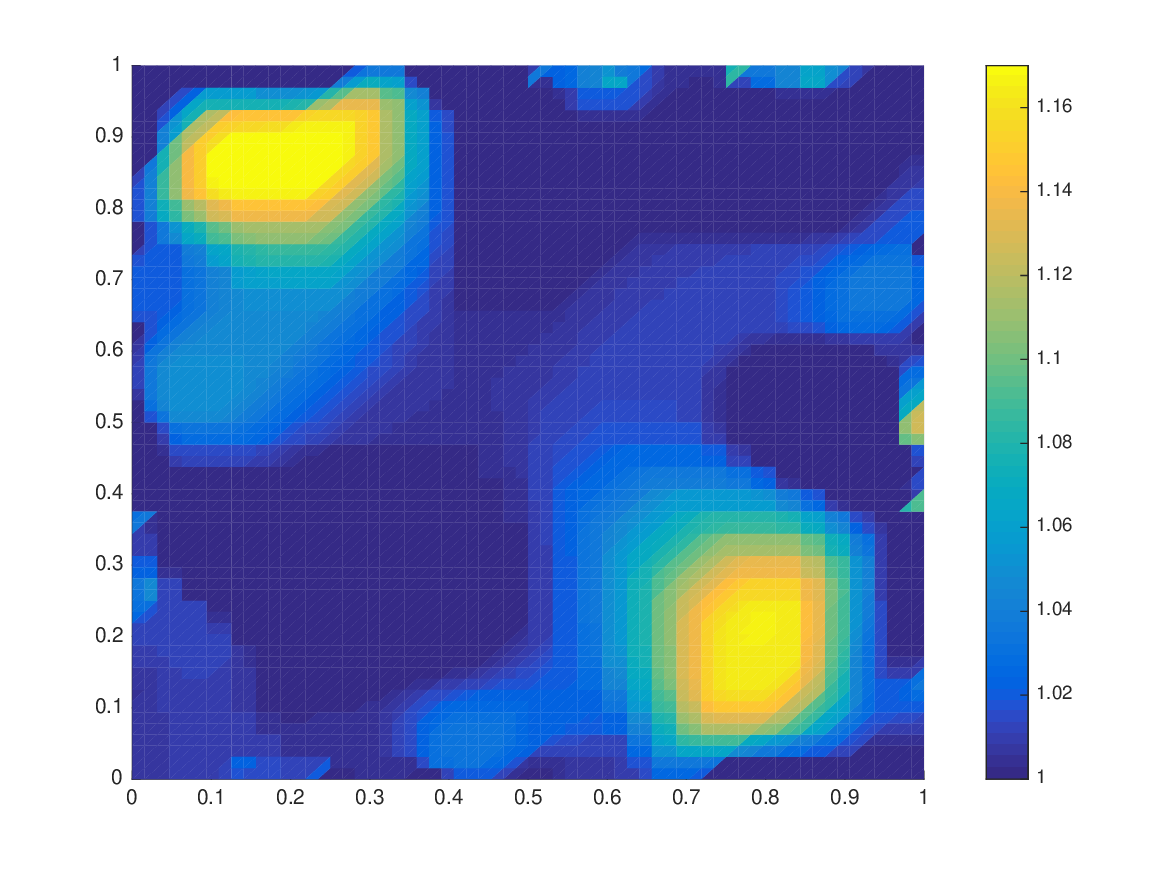}}
\caption{Plots of true conductivity and reconstruction in Example \ref{ex:4}, case 1.}
\label{Fig.lable5}
\end{figure}

\begin{figure}[ht!]
\centering
\subfigure[True conductivity $\sigma^\dag$]{
\label{fourj.sub.1}
\includegraphics[width=0.32\textwidth]{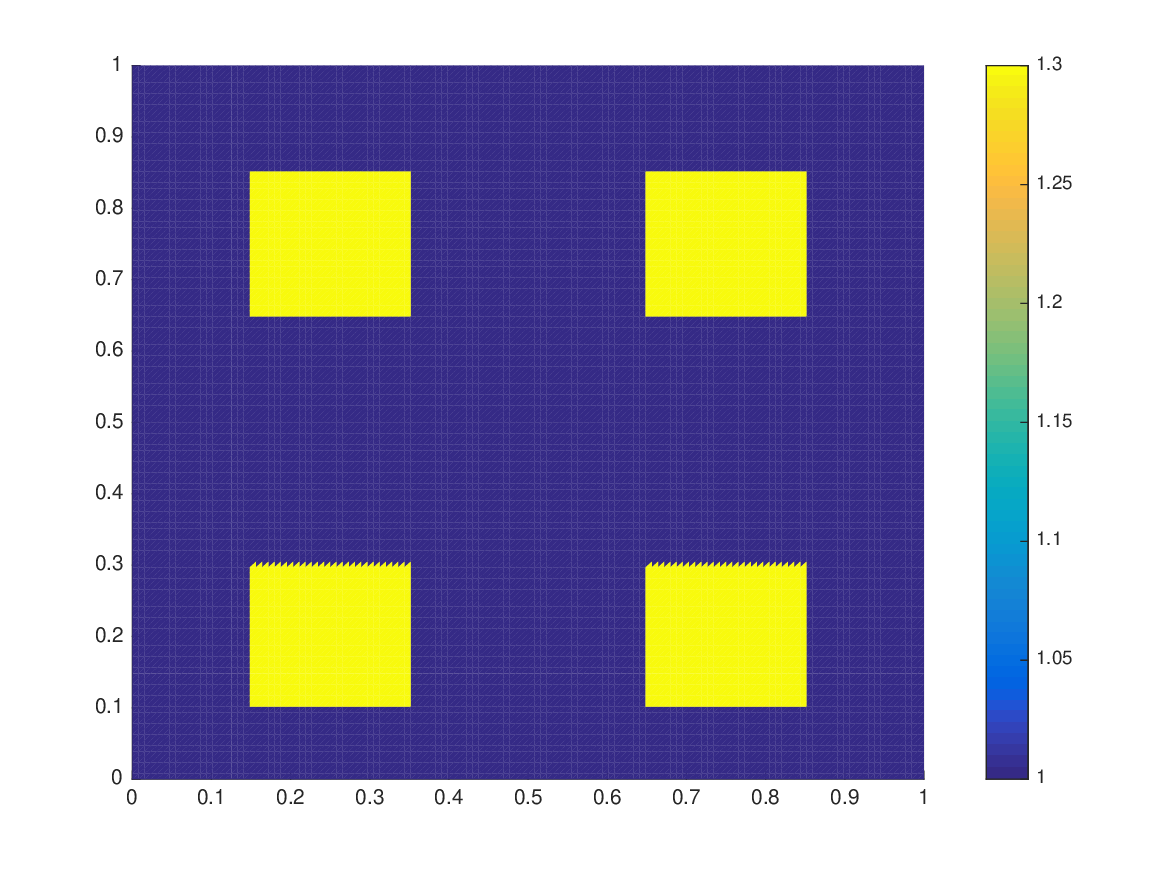}}
\subfigure[Reconstruction $\sigma_h^\star$ with noiseless measurements]{
\label{fourj.sub.2}
\includegraphics[width=0.32\textwidth]{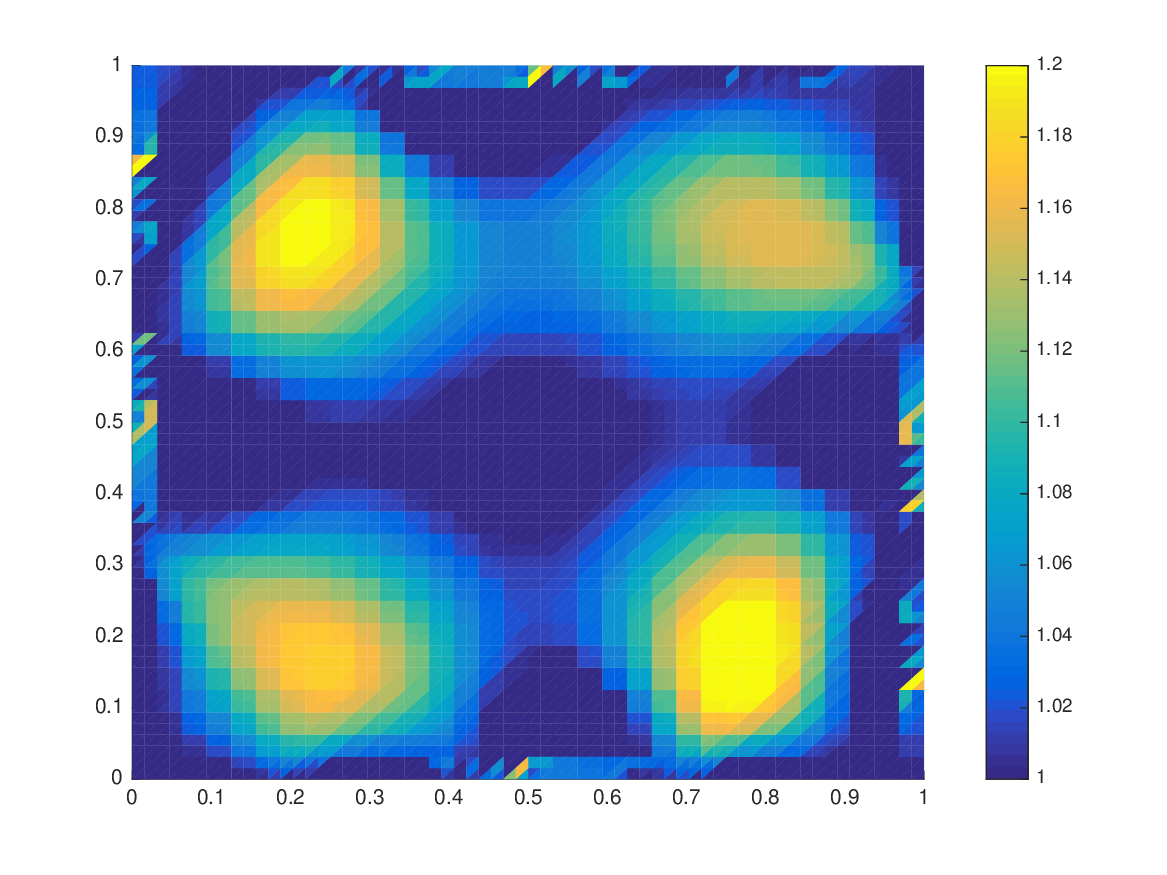}}
\subfigure[Reconstruction $\sigma_h^\star$ with noisy measurements]{
\label{fourj.sub.3}
\includegraphics[width=0.32\textwidth]{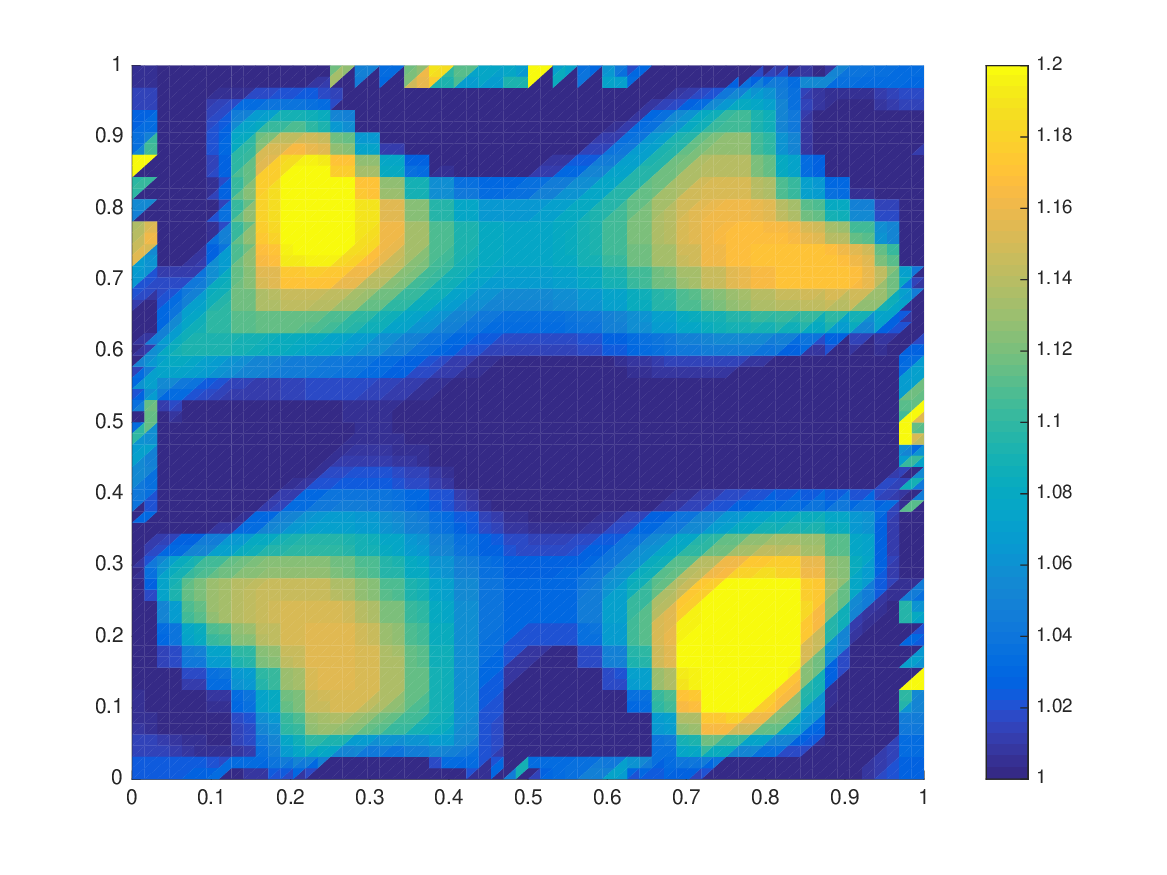}}
\caption{Plots of true conductivity and reconstruction in Example \ref{ex:4}, case 2.}
\label{Fig.lable6}
\end{figure}


\end{section}

\section{Concluding remarks}\label{conclusion}

In this paper, we present a numerical approach for solving the electrical impedance tomography problem. 
In the proposed approach, the forward problem is solved by the weak Galerkin method and the regularized minimization problem in the inverse process is numerically solved by FISTA. 
The error estimate is studied for the WG solution to the forward problem and  the convergence of the BV-based least-squares approach for the inverse process is established, in the sense that the sequence of discrete solutions contains a convergent subsequence to a solution of the continuous bounded variation regularization problem. Numerical experiments show that this approach provides convergent approximations for the non-smooth conductivity,  and it is  efficient and robust even for relative challenging cases without priori information on the shape of sought-for conductivity, which implies that it may have good potential applications in many real scenarios, e.g.,  geophysical imaging, cancer detection, and nondestructive testing. 



\appendix\section{Numerical algorithm for discrete inverse problem}\label{app1}

We will discuss the numerical algorithm for solving the 
discrete minimization problem with BV regularization \eqref{DM} on a family of regular mesh. 
We first introduce the coordinate representation $i_t: W_h \to \mathbb{R}^{N}$ of 
the space $W_h$ with respect to the standard basis $\{ \phi_j \}_{j=1}^{N}$ 
of piecewise constant functions on the mesh $\mathcal{T}_h$, 
where $N$ is the number of elements in the trangulation $\mathcal{T}_h$ and hence the dimension of $W_h$. 
For any $\sigma_h \in W_h$, the image of $\sigma_h$ under $i_t$ lies in the Euclidean space $\mathbb{R}^{N}$ 
consisting of the constant values of $\sigma_h$ restricted on the elements $T \in \mathcal{T}_h$. 
It directly follows that the image of $\mathcal{A}_h$ under $i_t$ is $\mathcal{C} = [\lambda, \lambda^{-1}]^{N}$. 
Then the discrete minimization problem \eqref{DM} can be reformulated as a problem in $\mathcal{C}$ given by 
\begin{equation}
\label{min_R}
\min_{\mathbf{x} \in \mathcal{C}} \{F(\mathbf{x}) = f(\mathbf{x}) + g(\mathbf{x})\},
\end{equation}
where
\begin{equation}
\begin{split}
f(\mathbf{x})&=\frac{1}{2}\Vert U(i_t^{-1}(\mathbf{x}))-U^\delta\Vert^2, \\
g(\mathbf{x})&=\alpha N_h((i_t^{-1}(\mathbf{x}))),
\end{split}
\end{equation}
 that is, $f$ corresponds to the measurement discrepancy which is a differentiable function, 
while $g$ corresponds to the total variation which is non-differentiable.

The problem \eqref{min_R} can be solved by the FISTA \cite{beck2009fast}  with the backtracking stepsize rule. 
To describe the method, we need to introduce a proximal map.
For any $L>0$, the map $p_L: \mathbb{R}^{N} \to \mathcal{C}$ is defined by 
\begin{equation}
\begin{split}
\label{prox}
p_L(\mathbf{y})
&= \text{prox}_{(1/L)g}\left(\mathbf{y}-\frac{1}{L}\nabla f(\mathbf{y})\right) 
=\mathop{\arg\min}_{\mathbf{x} \in \mathcal{C}} Q_L(\mathbf{x}, \mathbf{y}), 
\end{split}
\end{equation}
where $Q_L(\mathbf{x}, \mathbf{y})$ is the quadratic approximation of the functional $F$ at a given point $\mathbf{y}$:
\beqn\nb
Q_L(\mathbf{x}, \mathbf{y})=f(\mathbf{y})+\la \mathbf{x}-\mathbf{y},\nabla f(\mathbf{y})\ra
+\frac{L}{2}\Vert \mathbf{x}-\mathbf{y}\Vert^2+g(\mathbf{x}).
\eqn
The FISTA algorithm for solving \eqref{min_R} is summarized as follows:

{\centering
\begin{minipage}{0.9\linewidth}
\begin{algorithm}[H]
\caption{FISTA algorithm with the backtracking stepsize rule}\label{FISTA}
\begin{algorithmic}[1]
\State Input: search control parameter $\eta \in (0,1)$, maximum number of iterations $K$, tolerance $\delta$.
\State Initialization: step size $L_0>0$, initial guesses $\mathbf{y_1}=\mathbf{x}_0 \in \mathcal{C}$ and $t_1=1$.
\State Iteration: For $1 \leq k \leq K$, find the smallest integer $i_k \geq 0$ such that with $\hat{L}=\eta^{i_k}L_{k-1}$,
\beqn\nb
F(p_{\hat{L}}(\mathbf{y}_{k}))\leq Q_{\hat{L}}(p_{\hat{L}}(\mathbf{y}_{k}), \mathbf{y}_{k})\, .
\eqn
Set $L_k=\eta^{i_k}L_{k-1}$ and 
\beqnx
\mathbf{x}_k&=&p_{L_k}(\mathbf{y}_{k})\, ,\\
t_{k+1}&=&\dfrac{1+\sqrt{1+4t_k^2}}{2}\, ,\\
\mathbf{y}_{k+1}&=&\mathbf{x}_k+\frac{t_{k}-1}{t_{k+1}}(\mathbf{x}_k-\mathbf{x}_{k-1})\, .
\eqnx
If $\|\mathbf{y}_{k+1} - \mathbf{y}_k\| < \delta$, terninate. 
\end{algorithmic}
\end{algorithm}
\quad 
\end{minipage}
\par}

Next we introduce the numerical algorithm for the proximal map $p_L$ in \eqref{prox}, 
which involves the computation of the gradient of $f$ and 
the projection gradient method for the non-differentiable term $g$. 
To compute the $j$-th component of the gradient $\nabla f$ at $\mathbf{x} = i_t(\sigma_h)$, we observe that  
$$
\nabla f(\mathbf{x}) \cdot \mathbf{e}_j 
= \lim_{t \to 0} \dfrac{f(\mathbf{x}+t \mathbf{e}_j) - f(\mathbf{x})}{t}
= \lim_{t \to 0} \dfrac{J_{h,1}(\sigma_h+t \phi_j) - J_{h,1}(\sigma_h)}{t}
= J_{h,1}'(\sigma_h) \phi_j,
$$
where $J_{h,1}(\sigma_h) =\Vert U_h(\sigma_h)-U^\delta\Vert^2$ 
and $J_{h,1}'(\sigma_h) \alpha_h$ is the G\^{a}teaux derivative of $J_{h,1}$ 
at $\sigma_h$ in the direction $\alpha_h \in W_h$. 
To calculate the G\^{a}teaux derivative  $J_{h,1}'(\sigma_h) \alpha_h$, 
we  introduce an auxiliary dual problem:
find $(z_h, Z_h) \in \mathbb{H}_h$ such that 
\begin{equation}\label{aux}
a_s(\sigma_h, (z_h, Z_h), (v_h, V_h))=\la U_h(\sigma_h)-U^\delta, V_h\ra\, 
\quad  \forall  (v_h, V_h)\in \mathbb{H}_h\, .
\end{equation}
For $\alpha_h \in W_h$, if we denote $\varepsilon_h=u_h'(\sigma_h) \alpha_h$ and 
$\mathcal{E}_h=U_h'(\sigma_h)\alpha_h$, we obtain
$$J_{h,1}'(\sigma_h)\alpha_h=2\sum_{l=1}^L \int_{e_l}(U_h(\sigma_h)-U^\delta) \mathcal{E}_h ds.$$
By definition, $(\varepsilon_h,\mathcal{E}_h)$ satisfies for all $ (v_h, V_h)\in \mathbb{H}_h$,
\begin{equation}\label{gateaux}
(\alpha_h \nabla_w u_h, \nabla_w v_h) +(\sigma_h \nabla_w \varepsilon_h, \nabla_w v_h)+\sum_{l=1}^L z^{-1}_l \la\varepsilon_h-\mathcal{E}_h, v_h-V_h\ra_{e_l}+\sum_{T\in\mathcal{T}} h_l^{-1}\la Q_b\varepsilon_0 -\varepsilon_b, Q_b v_0- v_b\ra_{\partial T}=0.
\end{equation}
Taking $(v_h, V_h)=(\varepsilon_h,\mathcal{E}_h)$ in \eqref{aux} and $(v_h, V_h)=(z_h,Z_h)$ in \eqref{gateaux}, we can deduce
$$\sum_{l=1}^L \int_{e_l}(U_h(\sigma_h)-U^\delta) \mathcal{E}_h ds=-(\alpha_h \nabla_w u_h, \nabla_w z_h ).$$
Thus we have the formula of $j$-th component of $f$ 
\begin{equation}
\nabla f(\mathbf{x}) \cdot \mathbf{e}_j = -2(\phi_j \nabla_w u_h, \nabla_w z_h ). 
\end{equation}
Now the proximal map $p_L(\mathbf{y})$ can be reduced to a total variation-based denoising problem 
\begin{equation}
\mathop{\arg\min}_{\mathbf{x}\in\mathcal{C}} 
\left\{\Vert \mathbf{x}-\mathbf{d}\Vert^2+\alpha N_h(i_t^{-1}(\mathbf{x})) \right\},
\label{primalA}
\end{equation}
where $d =\mathbf{y}-\frac{1}{L}\nabla f(\mathbf{y}) $ and  $ N_h(i_t^{-1}(\mathbf{x}))$ is the $\ell_1$-based anisotropic total variation. 
We use the Fast Gradient Projection (FGP) \cite{beck2009fast} method to solve a dual problem of this denoising problem \eqref{primalA}, which is a
continuously differentiable convex minimization problem with a simple constraint set. Readers are referred to \cite{beck2009fast, beck2009fast1} for the relation between the primal and dual optimal solutions and the convergence rate of this algorithm.

\end{document}